\newtheorem{theorem}{Theorem}
\newtheorem{lemma}{Lemma}
\newtheorem{corollary}{Corollary}
\newtheorem{proposition}{Proposition}
\newtheorem{claim}{Claim}
\newtheorem{fact}{Fact}
\theoremstyle{definition}
\newtheorem{definition}{Definition}
\newcommand{\calH}{\mathcal{H}}
\newcommand{\calP}{\mathcal{P}}
\newcommand{\calG}{\mathcal{G}}
\newcommand{\calS}{\mathcal{S}}
\newcommand{\ex}{{\rm{ex}}}
\newcommand{\exs}{{\rm{ex}}_{\Sigma}}
\newcommand{\expi}{{\rm{ex}}_{\Pi}}
\title{
Extremal theory of locally sparse  multigraphs
}
\author{
Dhruv Mubayi \footnote{Department of Mathematics, Statistics, and Computer Science, University of Illinois at 
Chicago. Research supported in part by NSF Grant DMS 1300138; mubayi@uic.edu}
\and
Caroline Terry \footnote{Department of Mathematics, 
University of Maryland, College Park; 
cterry@umd.edu}
}
\begin{document}

\maketitle

\begin{abstract} 
An $(n,s,q)$-graph is an $n$-vertex multigraph where every set of $s$ vertices spans at most $q$ edges. In this paper, we determine the maximum product of the edge multiplicities in $(n,s,q)$-graphs if the congruence class of $q$ modulo ${s\choose 2}$ is in a certain interval of length about $3s/2$.  The smallest case that falls outside this range is $(s,q)=(4,15)$, and here the answer is  $a^{n^2+o(n^2)}$ where $a$ is transcendental assuming Schanuel's conjecture.  This could indicate the difficulty of solving the problem in full generality.   Many of our results can be seen as extending work by Bondy-Tuza~\cite{bondytuza} and F\"uredi-K\"undgen~\cite{FK} about sums of edge multiplicities to the product setting.  

We also prove a variety of other extremal results for $(n,s,q)$-graphs, including product-stability theorems.  These results are of additional interest because they can be used to enumerate and to prove logical 0-1 laws for $(n,s,q)$-graphs. Our work therefore extends many classical enumerative results in extremal graph theory beginning with the Erd\H os-Kleitman-Rothschild theorem \cite{EKR} to multigraphs.

\end{abstract}

\section{Introduction}


Given a set $X$ and a positive integer $t$, let ${X\choose t}=\{Y\subseteq X: |Y|=t\}$.  A \emph{multigraph} is a pair $(V,w)$, where $V$ is a set of vertices and $w:{V\choose 2}\rightarrow \mathbb{N}=\{0,1,2, \ldots\}$. 

\begin{definition} Given integers $s\geq 2$ and $q\geq 0$, a multigraph $(V,w)$ is an \emph{$(s,q)$-graph} if for every $X\in {V\choose s}$ we have $\sum_{xy\in {X\choose 2}}w(xy)\leq q$.  An $(n,s,q)$-graph is an $(s,q)$-graph with $n$ vertices, and $F(n,s,q)$ is the set of $(n,s,q)$-graphs with vertex set $[n]:=\{1,\ldots, n\}$. \end{definition}

 The goal of this paper is to investigate extremal, structural, and enumeration problems for $(n,s,q)$-graphs for a large class of pairs 
 $(s,q)$.

\begin{definition}
Given a multigraph $G=(V,w)$, define 
$$
S(G)=\sum_{xy\in {V\choose 2}} w(xy)\qquad \hbox{ and } \qquad P(G)=\prod_{xy\in {V\choose 2}}w(xy),
$$
$$
\exs(n,s,q) = \max \{S(G): G\in F(n,s,q)\}\quad \hbox{ and }\quad \expi(n,s,q)=\max \{P(G): G\in F(n,s,q)\}.
$$
An $(n,s,q)$-graph $G$ is \emph{sum-extremal} (\emph{product-extremal}) if $S(G)=\exs(n,s,q)$ ($P(G)=\expi(n,s,q)$). Let $\calS(n,s,q)$ ($\calP(n,s,q)$) be the set of all sum-extremal (product-extremal) $(n,s,q)$-graphs with vertex set $[n]$.  
\end{definition}
In \cite{bondytuza}, Bondy and Tuza determine the structure of multigraphs in $\calS(n,s,q)$ when $n$ is large compared to $s$ and $q\equiv 0, -1\pmod {{s\choose2}}$ and when $s=3$. In \cite{furedikundgen}, F\"{u}redi and K\"{u}ndgen (among other things) determine the asymptotic value of $\exs(n,s,q)$ for all $s, q$ with a $O(n)$ error term, and the exact value is determined for many cases.  Other special cases of these questions have appeared in \cite{kuchenbrod}.  A natural next step from the investigation of extremal problems for $(n,s,q)$-graphs is to consider questions of structure and enumeration. The question of enumeration for $(n,s,q)$-graphs was first addressed in \cite{MT415}, where it was shown the problem is closely related extremal results for the product of the edge multiplicities.

\begin{definition}
Given integers $s\geq 2$ and $q\geq {s\choose 2}$, define the \emph{asymptotic product density} and the \emph{asymptotic sum density}, respectively, as the following limits (which both exist): 
$$
\expi(s,q)=\lim_{n\rightarrow \infty} \Big(\expi(n,s,q)\Big)^{\frac{1}{{n\choose 2}}}\qquad \hbox{ and }\qquad \exs(s,q)=\lim_{n\rightarrow \infty} \frac{\exs(n,s,q)}{{n\choose 2}}.
$$
\end{definition}

In \cite{MT415}, the current authors showed $\expi(s,q)$ exists for all $s\geq 2$ and $q\geq 0$ and proved the following enumeration theorem for $(n,s,q)$-graphs in terms of $\expi(s,q+{s\choose 2})$.  

\begin{theorem}{\bf(~\cite{MT415})} \label{counting} Suppose $s\geq 2$ and $q\geq 0$ are integers. If $\expi(s, q+{s\choose 2})>1$, then  
$$
\expi\Big(s,q+{s\choose 2}\Big)^{{n\choose 2}}\leq |F(n,s,q)|\leq \expi\Big(s,q+{s\choose 2}\Big)^{(1+o(1)){n\choose 2}},
$$
and if $\expi(s,q+{s\choose 2})\leq 1$, then $|F(n,s,q)|\leq 2^{o(n^2)}$.
\end{theorem}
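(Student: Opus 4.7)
The argument decomposes into a lower bound by explicit construction and an upper bound by entropy-and-counting.

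\emph{Lower bound.} Assume $\expi(s, q+\binom{s}{2}) > 1$ and fix $H \in F(n, s, q+\binom{s}{2})$ with $P(H) = \expi(n, s, q+\binom{s}{2})$. The all-ones multigraph lies in $F(n, s, q+\binom{s}{2})$ (since $\binom{s}{2} \le q + \binom{s}{2}$), so $P(H) \ge 1$, forcing $w_H(e) \ge 1$ for every edge $e$. For each tuple $(w'(e))_e$ with $w'(e) \in \{0, 1, \ldots, w_H(e) - 1\}$, the resulting multigraph is an $(n, s, q)$-graph, since for any $s$-set $X$,
$$
\sum_{e \in \binom{X}{2}} w'(e) \;\le\; \sum_{e \in \binom{X}{2}} (w_H(e) - 1) \;\le\; \Big(q + \binom{s}{2}\Big) - \binom{s}{2} \;=\; q.
$$
The number of such tuples is $\prod_e w_H(e) = P(H)$, so $|F(n,s,q)| \ge \expi(n, s, q+\binom{s}{2})$. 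Combined with the monotonicity $\expi(n, s, q+\binom{s}{2})^{1/\binom{n}{2}} \ge \expi(s, q+\binom{s}{2})$ (obtained by averaging over random $m$-vertex subsets of an extremal $H$, which is what makes the defining limit exist), this yields $|F(n,s,q)| \ge \expi(s, q+\binom{s}{2})^{\binom{n}{2}}$.

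\emph{Upper bound.} The key observation is that $G \mapsto G + \mathbf{1}$ (incrementing every edge multiplicity by one) injects $F(n,s,q)$ into the set of $(n, s, q+\binom{s}{2})$-graphs with all positive multiplicities, and each image has product at most $\expi(n, s, q+\binom{s}{2}) \le \expi(s, q+\binom{s}{2})^{(1+o(1))\binom{n}{2}}$. To turn this per-graph product bound into a bound on $|F(n,s,q)|$, I would apply Shearer's entropy inequality to $m$-vertex subsets of $[n]$ for a large fixed $m$, which gives $\log|F(n,s,q)|/\binom{n}{2} \le \log|F(m,s,q)|/\binom{m}{2}$ and reduces the problem to bounding $|F(m,s,q)|$. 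For the latter, each $G \in F(m,s,q)$ satisfies $G + \mathbf{1} \le H$ pointwise for some coordinate-maximal $H \in F(m, s, q+\binom{s}{2})$ with all positive multiplicities; the number of $G$ below a fixed such $H$ is $\prod_e w_H(e) = P(H) \le \expi(m, s, q+\binom{s}{2})$. Writing $\calT_m$ for the set of these maximal templates, we obtain $|F(m,s,q)| \le |\calT_m| \cdot \expi(m, s, q+\binom{s}{2})$.

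The main obstacle is establishing $\log|\calT_m| = o(\binom{m}{2})$, which combined with $\expi(m, s, q+\binom{s}{2})^{1/\binom{m}{2}} \to \expi(s, q+\binom{s}{2})$ would yield $\log|F(m,s,q)|/\binom{m}{2} \to \log\expi(s, q+\binom{s}{2})$ and hence the desired bound after Shearer. I expect this to follow from a stability/structure result showing that coordinate-maximal all-positive $(m, s, q+\binom{s}{2})$-graphs are heavily constrained, with only subexponentially many in $\binom{m}{2}$ up to symmetry. The case $\expi(s, q+\binom{s}{2}) \le 1$ follows the same Shearer-plus-product reduction; since then $\expi(n, s, q+\binom{s}{2}) = 2^{o(n^2)}$, the upper bound collapses to $|F(n,s,q)| \le 2^{o(n^2)}$.
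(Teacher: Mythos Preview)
This theorem is not proved in the present paper; it is quoted from \cite{MT415}, so there is no proof here to compare against directly. However, the tools that \cite{MT415} developed and that are restated in this paper---the multigraph container theorem (Theorem~\ref{containerscor}) together with the removal-type Lemma~\ref{triangleremoval1}---make clear that the intended upper bound argument goes through hypergraph containers: one covers $F(n,s,q)$ by $2^{O(n^{2-1/(4s)}\log n)}$ containers, uses removal to turn each container into a genuine $(n,s,q)$-graph at the cost of $o(n^2)$ edge changes, and then bounds the number of submultigraphs of each corrected container by $P(\text{container}^+)\le \expi(n,s,q+\binom{s}{2})\cdot q^{o(n^2)}$.

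Your lower bound is correct and is exactly the natural construction. Your upper bound, however, has a real gap. The Shearer step gives only that $\log|F(n,s,q)|/\binom{n}{2}$ is non-increasing in $n$; it does not by itself identify the limit. You then try to pin down $|F(m,s,q)|$ via maximal templates $\calT_m$, and everything hinges on $\log|\calT_m|=o(m^2)$. But this is essentially the original problem in disguise: a priori there is no reason the number of coordinate-maximal positive $(m,s,q+\binom{s}{2})$-graphs should be subexponential in $\binom{m}{2}$, and the ``stability/structure result'' you invoke is precisely what requires nontrivial machinery (indeed, in the known proofs it is supplied by containers). Without that, the argument is circular: Shearer reduces $n$ to $m$, and then you need the very enumeration statement for $m$ that you set out to prove for $n$. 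The container theorem is what breaks this circularity by producing a \emph{small} family of templates directly, rather than appealing to maximality.
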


This result was used in \cite{MT415} along with a computation of $\expi(4,15)$ to give an enumeration of $F(n,4,9)$.  This case was of particular interest because it turned out that $|F(n,4,9)|=a^{n^2+o(n^2)}$, where $a$ is transcendental under the assumption of Schanuel's conjecture. In this paper, we continue this line of investigations by proving enumeration results for further cases of $s$ and $q$, and in some cases proving approximate structure theorems
 (the particular special case $(s,q)=(3,4)$ was recently studied in~\cite{FROSU}).   This generalizes many classical theorems about enumeration in extremal graph theory (beginning with the Erd\H os-Kleitman-Rothschild theorem \cite{EKR})
to the multigraph setting.
All of these results rely on computing $\expi(n,s,q)$, characterizing the elements in $\calP(n,s,q)$, and proving corresponding product-stability theorems, and this is the main content of this paper. Questions about $\expi(n,s,q)$ and $\calP(n,s,q)$ may also be of independent interest, as they are natural ``product versions'' of the questions about extremal sums for $(n,s,q)$-graphs investigated in \cite{bondytuza,furedikundgen}.  

\section{Main Results}
Given a multigraph $G=(V,w)$ and $xy\in {V\choose 2}$, we will refer to $w(xy)$ as the \emph{multiplicity} of $xy$.  The \emph{multiplicty of $G$} is $\mu(G)=\max\{w(xy): xy\in {V\choose 2}\}$. Our first main result, Theorem \ref{genst} below, gives us information about the asymptotic properties of elements in $F(n,s,q)$, in the case when $\expi(s,q+{s\choose 2})>1$.  \noindent Suppose $G=(V,w)$ and $G'=(V,w')$ are multigraphs.  We say that $G$ is a  \emph{submultigraph} of $G'$ if $V=V'$ and for each $xy\in {V\choose 2}$, $w(xy)\leq w'(xy)$.  Define $G^+=(V,w^+)$ where for each $xy\in {V\choose 2}$, $w^+(xy)=w(xy)+1$. Observe that if $G\in F(n,s,q)$, then $G^+\in F(n,s,q+{s\choose 2})$.  

\begin{definition}
Suppose $\epsilon>0$ and $n,s,q$ are integers satisfying $n\geq 1$, $s\geq 2$, and $q\geq 0$.  Set
\begin{align*}
\mathbb{E}(n,s,q,\epsilon)=\Big\{G\in F(n,s,q): P(G^+)>\expi\Big(s,q+{s\choose 2}\Big)^{(1-\epsilon){n\choose 2}}\Big\}.
\end{align*}
Then set $E(n,s,q,\epsilon)=\{G\in F(n,s,q): G\text{ is a submultigraph of some }G'\in \mathbb{E}(n,s,q,\epsilon)\}$.
\end{definition}

\begin{theorem}\label{genst}
Suppose $s\geq 2$ and $q\geq 0$ are integers satisfying $\expi(s,q+{s\choose 2})>1$.  Then for all $\epsilon>0$, there is $\beta>0$ such that for all sufficiently large $n$, the following holds.
\begin{align}\label{gensteq}
\frac{|F(n,s,q)\setminus E(n,s,q,\epsilon)|}{|F(n,s,q)|}\leq 2^{-\beta n^2}.
\end{align} 
\end{theorem}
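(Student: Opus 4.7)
Set $\pi = \expi(s,q+\binom{s}{2})$; by hypothesis $\pi>1$. The plan is to combine the lower bound $|F(n,s,q)|\geq \pi^{\binom{n}{2}}$ coming from Theorem \ref{counting} with a quantitative supersaturation bound showing that exponentially few multigraphs in $F(n,s,q)$ have shifted product $P(G^+)$ below the extremal threshold.

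First I would perform a reduction. Every $G\in F(n,s,q)$ is a submultigraph of itself, so $G\notin E(n,s,q,\epsilon)$ forces $G\notin \mathbb{E}(n,s,q,\epsilon)$, i.e., $P(G^+)\leq \pi^{(1-\epsilon)\binom{n}{2}}$. Writing
\[
\calL_\epsilon := \{G\in F(n,s,q) : P(G^+)\leq \pi^{(1-\epsilon)\binom{n}{2}}\},
\]
this yields $|F(n,s,q)\setminus E(n,s,q,\epsilon)|\leq |\calL_\epsilon|$. Combined with $|F(n,s,q)|\geq \pi^{\binom{n}{2}}$, it suffices to prove $|\calL_\epsilon|\leq \pi^{(1-\gamma)\binom{n}{2}}$ for some $\gamma=\gamma(\epsilon)>0$, whereupon \eqref{gensteq} holds with $\beta$ any fixed positive multiple of $\gamma\log_2\pi$.

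To bound $|\calL_\epsilon|$, I would use the bijection $G\mapsto G^+$ to identify $\calL_\epsilon$ with the set of $1$-positive $H\in F(n,s,q+\binom{s}{2})$ satisfying $P(H)\leq \pi^{(1-\epsilon)\binom{n}{2}}$. Then partition these $H$ by their multiplicity profile $\mbar=(m_1,\ldots,m_{q+\binom{s}{2}})$, where $m_i$ is the number of edges of multiplicity $i$. Only polynomially many profiles occur, and for each profile the value $P(H)=\prod_i i^{m_i}$ is completely determined; the constraint $P(H)\leq \pi^{(1-\epsilon)\binom{n}{2}}$ therefore forces the normalized frequency vector $\mbar/\binom{n}{2}$ to remain a fixed positive distance from every optimizer of the convex program that computes $\pi$. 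For each such ``suboptimal'' profile, the goal is to bound the number of $(n,s,q+\binom{s}{2})$-graphs realizing it by $\pi^{(1-\gamma)\binom{n}{2}}$ uniformly, and then sum over the polynomially many profiles.

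The hardest step will be this uniform per-profile count. A suboptimal profile should be exponentially under-represented among $(s,q+\binom{s}{2})$-graphs because the $s$-set sum constraint couples edges and heavily favors profiles near the convex extremum, but extracting a genuine $\pi^{-\gamma\binom{n}{2}}$ saving demands a genuinely quantitative stability statement, not mere $o(1)$-stability. My plan is to adapt the entropic/container-style upper-bound argument behind Theorem \ref{counting} from \cite{MT415}, feeding in the pinned-profile constraint so that the counting step acquires the desired uniform multiplicative penalty; an additional difficulty is that the convex program defining $\pi$ may have a whole manifold of optimizers (as the paper's abstract suggests for problematic cases like $(s,q)=(4,15)$), so the distance-from-optimum estimate must be proven uniformly over that manifold. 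Once the per-profile bound is established, summing it against the lower bound on $|F(n,s,q)|$ completes the proof.
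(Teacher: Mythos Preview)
Your reduction in the first step is correct but fatally lossy, and the sufficient condition you extract from it is false. Take the simplest case $s=2$, $q=a-1$ with $a\ge 2$, so that $\pi=a$ and $F(n,2,a-1)$ is just the set of multigraphs on $[n]$ with all multiplicities in $\{0,\dots,a-1\}$. For such a $G$ one has $P(G^+)=\prod_e(w(e)+1)$, and for a uniformly random $G$ the typical value is $(a!)^{\binom{n}{2}/a}=(a/e)^{(1+o(1))\binom{n}{2}}$, which is below $a^{(1-\epsilon)\binom{n}{2}}$ whenever $\epsilon<\log_a e$. Hence $|\calL_\epsilon|=(1-o(1))\,a^{\binom{n}{2}}$ for such $\epsilon$, so no bound of the form $|\calL_\epsilon|\le\pi^{(1-\gamma)\binom{n}{2}}$ can hold. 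On the other hand the theorem is trivially true here: the constant-$(a-1)$ multigraph lies in $\mathbb{E}(n,2,a-1,\epsilon)$ and every $G$ is a submultigraph of it, so $E(n,2,a-1,\epsilon)=F(n,2,a-1)$ and the left side of \eqref{gensteq} is zero. The point is that $F\setminus E$ can be much smaller than $\calL_\epsilon$; by passing from ``$G$ is not a submultigraph of \emph{any} $G'\in\mathbb{E}$'' to merely ``$G\notin\mathbb{E}$'' you have discarded exactly the information that makes the statement true.

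The paper's argument keeps that information. It applies the multigraph container theorem (Theorem~\ref{containerscor}) to cover $F(n,s,q)$ by subexponentially many containers $G$; each $G$ is $\nu$-close to a genuine $G'\in F(n,s,q)$ by the removal lemma. If some $H\notin E(n,s,q,\epsilon)$ sits below a container $G$, then since $H$ is a submultigraph of no element of $\mathbb{E}$, in particular it is not a submultigraph of (a suitable modification inside) $G'$, which forces $P(G'^+)$ and hence $P(G^+)$ to be small. Now one uses the elementary identity that the number of submultigraphs of $G$ is exactly $P(G^+)$; summing $P(G^+)\le\expi(n,s,q+\binom{s}{2})^{1-\epsilon/2}$ over the subexponentially many relevant containers and comparing with the lower bound $|F(n,s,q)|\ge\expi(s,q+\binom{s}{2})^{\binom{n}{2}}$ gives \eqref{gensteq}. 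The profile/convex-program machinery you propose is neither needed nor sufficient: the whole argument hinges on the link between $P(G^+)$ and the \emph{count of submultigraphs of the container}, not on the product value of individual $G$'s.
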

Theorem \ref{genst} will be proved in Section \ref{aasection} using a consequence of a version of the hypergraph containers theorem for multigraphs from \cite{MT415}.  Our next results investigate $\expi(n,s,q)$ and $\calP(n,s,q)$ for various values of $(s,q)$.  Observe that if $q<{s\choose 2}$, then for any $n\geq s$, every $(n,s,q)$-graph $G$ must contain an edge of multiplicity $0$, and therefore $P(G)=0$.  Consequently, $\expi(n,s,q)=0$ and $\calP(n,s,q)=F(n,s,q)$, for all $n\geq s$.  For this reason we restrict our attention to the cases where $s\geq 2$ and $q\geq {s\choose 2}$.  Suppose $G=(V,w)$ and $G'=(V',w')$.  Then  $G=(V,w)$ and $G'=(V',w')$ are \emph{isomorphic}, denoted $G\cong G'$, if there is a bijection $f:V\rightarrow V'$ such that for all $xy\in {V\choose 2}$, $w(xy)=w'(f(x)f(y))$.  If $V=V'$, set $\Delta(G,G')=\big\{xy\in \binom{V}{2}: w(xy)\neq w'(xy)\big\}$.  Given $\delta>0$, $G$ and $G'$ are \emph{$\delta$-close} if $|\Delta(G,G')|\leq \delta n^2$, otherwise they are \emph{$\delta$-far}.    If $X\subseteq V$, $G[X]$ denotes the multigraph $(X,w\upharpoonright_{X\choose 2})$. Suppose that $q\equiv b\pmod{{s\choose 2}}$.  Our results fall into three cases depending on the value of $b$. 

\subsection{{\bf The case $0 \le b \le s-2$}}
\begin{definition}
Given $n\geq s\geq 1$ and $a\geq 1$, let $\mathbb{U}_{s,a}(n)$ be the set of multigraphs $G=([n],w)$ such that there is a partition $A_0, A_1,\ldots, A_{\lfloor \frac{n}{s}\rfloor}$ of $[n]$ for which the following holds.
\begin{itemize}
\item For each $1\leq i\leq \lfloor n/s\rfloor$, $|A_i|=s$, and $|A_0|=n-s\lfloor n/s\rfloor$.
\item For each $0\leq i\leq \lfloor n/s\rfloor$, and $G[A_i]$ is a star with $|A_i|-1$ edges of multiplicity $a+1$ and all other edges of multiplicity $a$.
\item For all $xy\notin \bigcup {A_i\choose 2}$, $w(xy)=a$.
\end{itemize}
Let $\mathbb{U}_a(n)$ be the unique element of $\mathbb{U}_{1,a}(n)$, i.e. $\mathbb{U}_a(n)=([n],w)$ where $w(xy)=a$ for all $xy\in {[n]\choose 2}$.
\end{definition}

\begin{theorem}\label{casei}
Suppose $n, s,q,a$ are integers satisfying $n\geq s\geq 2$, $a\geq 1$, and $q=a{s\choose 2}+b$ for some $0\leq b\leq s-2$.  
\begin{enumerate}[$\bullet$]
\item (Extremal) Then $a^{n\choose 2}\leq \expi(n,s,q)\leq a^{n\choose 2}((a+1)/a)^{\lfloor \frac{b}{b+1}n\rfloor}$ and thus $\expi(s,q)=a$.  Further, 
\begin{enumerate}
\item If $b=0$, then $\calP(n,s,q)=\{\mathbb{U}_{a}(n)\}$ and $\expi(n,s,q)=a^{n\choose 2}$.  
\item If $b=s-2$, then $\mathbb{U}_{s-1,a}(n)\subseteq \calP(n,s,q)$ and $\expi(n,s,q)=a^{n\choose 2}\Big(\frac{a+1}{a}\Big)^{\lfloor \frac{(s-2)n}{s-1}\rfloor}$.  Also, $\calP(n,3,q)=\mathbb{U}_{2,a}(n)$.
\end{enumerate}
\item (Stability) For all $\delta>0$, there is $\epsilon>0$ and $M$ such that for all $n>M$ and $G\in F(n,s,q)$, if $P(G)>\expi(n,s,q)^{1-\epsilon}$, then $G$ is $\delta$-close to $\mathbb{U}_a(n)$.
\end{enumerate}
\end{theorem}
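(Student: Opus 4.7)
The plan is to establish the lower bounds by construction, the upper bound with its structural consequences, and finally the stability statement.

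For the lower bound, every $s$-subset of $\mathbb{U}_a(n)$ has total weight $a\binom{s}{2}\leq q$, so $\mathbb{U}_a(n)\in F(n,s,q)$ and $P(\mathbb{U}_a(n))=a^{\binom{n}{2}}$. For the case $b=s-2$, to see that $\mathbb{U}_{s-1,a}(n)\in F(n,s,q)$, observe that since each part has size $s-1<s$, any $s$-subset $X$ meets at least two parts; because each part's multiplicity-$(a+1)$ edges form a star on $s-2$ edges, the number of heavy edges inside $X$ is at most $|X|-(\text{number of parts met})\leq s-2=b$. A direct count then gives $P(\mathbb{U}_{s-1,a}(n))=a^{\binom{n}{2}}((a+1)/a)^{(s-2)\lfloor n/(s-1)\rfloor}$.

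For the upper bound, an integer-constrained AM-GM applied to each $s$-subset $X$ gives $\prod_{xy\in\binom{X}{2}}w(xy)\leq a^{\binom{s}{2}-b}(a+1)^{b}$, attained only when exactly $b$ edges of $\binom{X}{2}$ have multiplicity $a+1$ and the rest have multiplicity $a$. When $b=0$ this per-$s$-set bound, combined with the averaging identity $P(G)^{\binom{n-2}{s-2}}=\prod_X\prod_{xy\in\binom{X}{2}}w(xy)$, immediately yields $P(G)\leq a^{\binom{n}{2}}$ with equality iff $w\equiv a$, proving both the bound and $\calP(n,s,q)=\{\mathbb{U}_a(n)\}$. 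When $b\geq 1$ the averaging is too lossy, so the plan is to first reduce, via a local swapping argument that never decreases $P$ and preserves the $(n,s,q)$-property, to the case $w(xy)\in\{a,a+1\}$ for all $xy$, and then pass to the simple graph $H=\{xy:w(xy)=a+1\}$. The condition on $G$ forces every $s$-subset to span at most $b$ edges of $H$, so $H$ has maximum degree at most $b$ and contains no path with $b+1$ edges; a Tur\'an-type counting argument then yields $|H|\leq \lfloor bn/(b+1)\rfloor$, whence $P(G)=a^{\binom{n}{2}}((a+1)/a)^{|H|}\leq a^{\binom{n}{2}}((a+1)/a)^{\lfloor bn/(b+1)\rfloor}$. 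When $b=s-2$ this matches the lower bound, so $\mathbb{U}_{s-1,a}(n)\subseteq\calP(n,s,q)$; and when $s=3$, tightness forces $H$ to be a maximum matching, and then the $3$-set constraint around each heavy edge pins the remaining multiplicities at $a$, yielding $\calP(n,3,q)=\mathbb{U}_{2,a}(n)$.

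For the stability statement, decompose $\log P(G)=\binom{n}{2}\log a+\Sigma^{+}-\Sigma^{-}$ where $\Sigma^{+}=\sum_{w(xy)>a}\log(w(xy)/a)$ and $\Sigma^{-}=\sum_{w(xy)<a}\log(a/w(xy))$. The upper bound gives $\Sigma^{+}-\Sigma^{-}\leq \lfloor bn/(b+1)\rfloor\log((a+1)/a)=O(n)$, and the hypothesis $P(G)>\expi(n,s,q)^{1-\epsilon}$ gives $\Sigma^{+}-\Sigma^{-}\geq -\epsilon\binom{n}{2}\log a$. Together these bound both $\Sigma^{+}$ and $\Sigma^{-}$ by $O(\epsilon n^{2}+n)$, and since each summand is at least a positive constant ($\log((a+1)/a)$ for $\Sigma^{+}$, and $\log(a/(a-1))$ for $\Sigma^{-}$ when $a\geq 2$; when $a=1$ the sum $\Sigma^{-}$ is empty since $P(G)>0$ forces $w\geq 1$), the number of edges with $w(xy)\ne a$ is $O(\epsilon n^{2}+n)$. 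Choosing $\epsilon$ small and $n$ large makes this at most $\delta n^{2}$, so $G$ is $\delta$-close to $\mathbb{U}_a(n)$.

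The main obstacle is the sharp Tur\'an-type bound $|H|\leq \lfloor bn/(b+1)\rfloor$ together with the swapping reduction to $w\in\{a,a+1\}$: per-$s$-set averaging of the AM-GM inequality yields only an $O(n^{2})$-exponent in the upper bound, while the crude max-degree bound $|H|\leq bn/2$ already loses a factor $(b+1)/2$ once $b\geq 2$. The sharp linear bound will require exploiting both the max-degree and the path-length restrictions on $H$ simultaneously, and the swapping step must be executed carefully to avoid violating any $s$-set inequality when weight is redistributed.
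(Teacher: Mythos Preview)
Your $b=0$ extremal argument and the constructions are fine, but both the $b\geq 1$ upper bound and the stability step have problems.

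For the upper bound when $b\geq 1$, the swapping reduction you flag as the ``main obstacle'' is actually unnecessary, and likely does not work as stated: decreasing $w(xy)$ by $1$ and increasing $w(x'y')$ by $1$ can raise $S(X)$ on any $s$-set $X$ that contains $x'y'$ but not $xy$, so there is no local redistribution that obviously preserves the $(s,q)$ constraint. The paper bypasses this entirely. It cites the sum bound $\exs(n,s,q)\leq a\binom{n}{2}+\lfloor bn/(b+1)\rfloor$ of Bondy--Tuza and F\"uredi--K\"undgen, and then applies the integer AM--GM inequality once to all $\binom{n}{2}$ multiplicities: if $\sum_i x_i\leq (a+1)\ell-k$ then $\prod_i x_i\leq (a+1)^{\ell-k}a^{k}$, with equality characterised. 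With $\ell=\binom{n}{2}$ and $k=\binom{n}{2}-\lfloor bn/(b+1)\rfloor$ this gives $P(G)\leq a^{\binom{n}{2}}((a+1)/a)^{\lfloor bn/(b+1)\rfloor}$ in one line, and the equality case immediately handles (a), (b), and the $s=3$ characterisation. Your Tur\'an step on $H$, if carried out, would essentially be reproving the F\"uredi--K\"undgen bound in the special case $w\in\{a,a+1\}$.

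Your stability argument has a genuine logical gap. From $-\epsilon\binom{n}{2}\log a\leq \Sigma^{+}-\Sigma^{-}\leq O(n)$ you cannot conclude that $\Sigma^{+}$ and $\Sigma^{-}$ are each $O(\epsilon n^{2}+n)$; both could a priori be of order $n^{2}$ and nearly equal. What is missing is an independent bound on $\Sigma^{+}$, equivalently a penalty in $P(G)$ for each edge of multiplicity exceeding $a$, of the form $P(G)\leq \expi(n,s,q)\,q^{Cn}\alpha^{p_a(G)}$ for some constant $\alpha<1$. The paper establishes exactly this (its Lemma~\ref{uniformstability}) via a double induction on $s$ and $n$: if some $(s{-}1)$-set $Y$ has $S(Y)\geq a\binom{s-1}{2}+b$ then every $z\notin Y$ satisfies $S_z(Y)\leq a(s-1)$, so one strips off $Y$ and inducts on $n$; otherwise $G\in F(n,s-1,q')$ for a smaller $q'$ and one inducts on $s$. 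Once $p_a(G)\leq\delta' n^{2}$ is secured this way, your inequality $P(G)\leq a^{\binom{n}{2}}((a-1)/a)^{m_a(G)}q^{p_a(G)}$ does finish the bound on $m_a(G)$ exactly as you describe.
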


One interesting phenomenon discovered in \cite{bondytuza} is that $\calS(n,3,3a+1)$ has many non-isomorphic multigraphs when $a\geq 1$ and $n$ is large. In contrast to this, Theorem \ref{casei} shows that all the multigraphs in $\calP(n,3,3a+1)=\mathbb{U}_{2,a}(n)$ are isomorphic.

\subsection{{\bf The case $b={s\choose 2}-t$ for some $1\leq t\leq \frac{s}{2}$}}
 Call a partition $U_1,\ldots, U_k$ of a finite set $X$ an \emph{equipartition} if $||U_i|-|U_j||\leq 1$ for all $i\neq j$. Recall the Tur\'{a}n graph, $T_s(n)$, is the complete $s$-partite graph with $n$ vertices, whose parts form an equipartition of its vertex set.
\begin{definition}
Given integers $a\geq 2$ and $n\geq s\geq 1$, define $\mathbb{T}_{s,a}(n)$ to be the set of multigraphs $G=([n],w)$ with the following property.  There is an equipartition $U_1,\ldots, U_{s}$ of $[n]$ such that
\[
w(xy)=\begin{cases} a-1 & \text{ if }xy\in {U_i\choose 2} \text{ for some }i\in [s].\\
a & \text{ if }(x,y)\in U_i \times U_j \text{ for some }i\neq j\in [s].
\end{cases}
\]
\end{definition}
\noindent We think of elements of $\mathbb{T}_{s,a}(n)$ as multigraph analogues of Tur\'{a}n graphs.  Let $t_{s}(n)$ be the number of edges in $T_s(n)$.

\begin{theorem}\label{caseii}
Let $s,q,a, t$ be integers satisfying $a\geq 2$, $q=a{s\choose 2}-t$ and either
\begin{enumerate}[(a)]
\item $s\geq 2$ and $t=1$ or 
\item $s\geq 4$ and $2\leq t\leq \frac{s}{2}$.
\end{enumerate}
\begin{enumerate}[$\bullet$]
\item (Extremal) Then for all $n\geq s$, $\mathbb{T}_{s-t,a}(n)\subseteq \calP(n,s,q)$, $\expi(n,s,q)=(a-1)^{n\choose 2}(\frac{a}{a-1})^{t_{s-t}(n)}$, and $\expi(s,q)=(a-1)(\frac{a}{a-1})^{\frac{s-t-1}{s-t}}$.  If (a) holds and $n\geq s$ or (b) holds and $n$ is sufficiently large, then $\calP(n,s,q)=\mathbb{T}_{s-t,a}(n)$.
\item (Stability) For all $\delta>0$, there is $M$ and $\epsilon$ such that for all $n>M$ and $G\in F(n,s,q)$, if $P(G)>\expi(n,s,q)^{1-\epsilon}$ then $G$ is $\delta$-close to an element of $\mathbb{T}_{s-t,a}(n)$.
\end{enumerate}
\end{theorem}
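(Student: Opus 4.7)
The lower bound is a direct verification. For $G\in\mathbb{T}_{s-t,a}(n)$ with parts $U_1,\ldots,U_{s-t}$, an $s$-subset $X$ meeting $U_i$ in $k_i$ vertices has edge-sum $a\binom{s}{2}-\sum_i\binom{k_i}{2}$ inside it. Subject to $\sum_i k_i=s$ with $s-t$ parts, the convex sum $\sum_i\binom{k_i}{2}$ is minimized by the most balanced distribution; since $t\le s/2$ we may use $t$ parts of size $2$ and $s-2t$ parts of size $1$, attaining the minimum value $t$ exactly. Hence every $s$-set is admissible, $G\in F(n,s,q)$, and $P(G)=(a-1)^{\binom{n}{2}}(a/(a-1))^{t_{s-t}(n)}$; dividing by $\binom{n}{2}$ and passing to the limit gives the claimed value of $\expi(s,q)$.

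For the matching upper bound I plan first to reduce to multigraphs with multiplicities in $\{a-1,a\}$. Given a product-extremal $G$, I would argue via a local exchange: if some $w(xy)\ge a+1$, then in every $s$-set containing $xy$ the average of the remaining multiplicities is strictly below $a$, so globally there exists $x'y'$ with $w(x'y')\le a-1$ such that every $s$-set containing $x'y'$ but not $xy$ has positive slack. Simultaneously decrementing $w(xy)$ and incrementing $w(x'y')$ preserves the $(s,q)$-constraint and strictly increases $P(G)$, a contradiction; a symmetric argument handles $w(x'y')\le a-2$. Once all multiplicities lie in $\{a-1,a\}$, let $H$ be the simple graph on $[n]$ whose edges are those of multiplicity $a$. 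The $(s,q)$-constraint reads: every $s$-subset of $V(H)$ spans at most $\binom{s}{2}-t$ edges of $H$, and $P(G)=(a-1)^{\binom{n}{2}}(a/(a-1))^{|E(H)|}$. It therefore suffices to show $|E(H)|\le t_{s-t}(n)$, with equality forcing $H=T_{s-t}(n)$ (for all $n\ge s$ when $t=1$, and for large $n$ when $t\ge 2$). For $t=1$ this is Tur\'an's theorem. For $t\ge 2$ I would apply Zykov symmetrization directly to $H$: whenever two vertices $u,v$ satisfy $\deg_H(u)>\deg_H(v)$, replace $v$'s $H$-neighborhood by a copy of $u$'s. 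A short calculation using $t\le s/2$ shows the $s$-set bound is preserved and $|E(H)|$ weakly increases; termination yields a complete multipartite graph, and maximality forces balanced parts whose number must equal $s-t$, hence $T_{s-t}(n)$.

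For the stability clause, I plan to pipeline the same exchange-reduction with quantitative bounds (a product within $\expi(n,s,q)^{1-\epsilon}$ forces all but $O(\epsilon n^2)$ edges to lie in $\{a-1,a\}$), then invoke a stability version of the Tur\'an-type bound on $H$: the Erd\H os-Simonovits stability theorem for $t=1$, and a robust Zykov-symmetrization argument for $t\ge 2$, yielding that $H$ is $\delta$-close in edit distance to $T_{s-t}(n)$. This translates immediately into $G$ being $\delta$-close to some element of $\mathbb{T}_{s-t,a}(n)$.

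The main obstacle is the generalized Tur\'an-type inequality for $t\ge 2$, which has no clean forbidden-subgraph reformulation. The key difficulty is that the $s$-set constraint does not interact straightforwardly with vertex symmetrization, since cloning a vertex affects many $s$-sets simultaneously. Carefully setting up the symmetrization operation so that the $s$-set bound is provably preserved at every step, and ruling out the sporadic small-$n$ extremal configurations (which is why the theorem requires large $n$ in case (b)), is where I expect the bulk of the technical effort to lie.
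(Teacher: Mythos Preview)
Two of your key steps do not work as written.

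\textbf{The local exchange is unjustified.} You assert that if $w(xy)\ge a+1$ in a product-extremal $G$, then some $x'y'$ with $w(x'y')\le a-1$ has positive slack in every $s$-set containing $x'y'$ but not $xy$. Averaging over $s$-sets through $xy$ does produce low-multiplicity pairs, but gives no control over the $s$-sets through those pairs that avoid $xy$; nothing prevents all of them from being tight. The paper bypasses exchange entirely: it quotes the sum-extremal value $\exs(n,s,q)=(a-1)\binom{n}{2}+t_{s-t}(n)$ (Bondy--Tuza for $t=1$; Dirac plus a shift lemma of Bondy--Tuza for $2\le t\le s/2$) and applies an integer AM--GM inequality: if $\sum_{i=1}^{\ell} x_i\le a\ell-k$ with the $x_i$ positive integers, then $\prod x_i\le a^{\ell-k}(a-1)^k$, with equality only when exactly $k$ of the $x_i$ equal $a-1$ and the rest equal $a$. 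Since $\mathbb{T}_{s-t,a}(n)$ realizes the sum bound with multiplicities in $\{a-1,a\}$, this single inequality immediately forces every product-extremal graph to have that form, with the correct count of each.

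\textbf{Zykov symmetrization does not preserve the $s$-set density bound.} Take $s=4$, $t=2$, so the constraint is that every $4$-set spans at most $4$ edges. On $\{u,v,a,b,c\}$ let $H$ have edge set $\{ua,ub,uc,va,bc\}$; one checks all five $4$-sets span at most $4$ edges. Here $uv\notin E(H)$ and $\deg(u)=3>1=\deg(v)$, so your rule replaces $N(v)$ by $\{a,b,c\}$. Afterwards $\{u,v,b,c\}$ spans the edges $ub,uc,vb,vc,bc$, which is $5$---a violation. The obstruction you yourself flagged (cloning a vertex affects many $s$-sets simultaneously) is genuine and is not cured by $t\le s/2$; Dirac's proof does not proceed by symmetrization, and the paper just cites it. For uniqueness and stability in case~(b) the paper then takes a different route from anything you sketch: with $s'=s-t+1$ and $q'=a\binom{s'}{2}-1$, it proves by induction on $n$ that any $G\in F(n,s,q)$ containing $k$ pairwise disjoint $s'$-sets with $S\ge a\binom{s'}{2}$ satisfies $P(G)\le (C\alpha^n)^{k}\,\expi(n,s,q)$ for explicit $C>1$ and $0<\alpha<1$. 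For large $n$ this forces product-extremal graphs into $F(n,s',q')$, reducing both uniqueness and (after a removal-lemma step) stability to case~(a), where your Tur\'an/Erd\H os--Simonovits plan is indeed what the paper does.
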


\subsection{{\bf The case  $(s,q)=(4,9)$}}
 The case $(s,q)=(4,9)$ is the first pair where $s\geq 2$ and $q\geq {s\choose 2}$ which is not covered by Theorems \ref{casei} and \ref{caseii}, and is closely related to an old question in extremal combinatorics. Let $\ex(n,\{C_3,C_4\})$ denote the maximum number of edges in a graph on $n$ vertices which contains no $C_3$ or $C_4$ as a non-induced subgraph.

\begin{theorem}\label{caseiv1}
 $\expi(n,4,9)=2^{\ex(n,\{C_3,C_4\})}$ for all $n\geq 4$.
\end{theorem}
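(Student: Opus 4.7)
The plan is to prove the equality by separately establishing the matching lower and upper bounds, the lower bound by direct construction and the upper bound by case analysis on the heaviest edge weights in an arbitrary $(n,4,9)$-graph.

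For the lower bound, I would fix an $n$-vertex graph $H$ with $\ex(n,\{C_3,C_4\})$ edges and no copy of $C_3$ or $C_4$, and define $G=([n],w)$ by $w(xy)=2$ for $xy\in E(H)$ and $w(xy)=1$ otherwise. Any $4$-vertex graph with at least $4$ edges must be a $C_4$, a paw (a triangle with a pendant), $K_4-e$, or $K_4$, each of which contains $C_3$ or $C_4$, so the $\{C_3,C_4\}$-free graph $H$ spans at most $3$ edges on every $4$-set. Hence the sum on any $4$-set of $G$ is at most $6+3=9$, so $G\in F(n,4,9)$ and $P(G)=2^{|E(H)|}=2^{\ex(n,\{C_3,C_4\})}$.

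For the upper bound, take any $G=([n],w)\in F(n,4,9)$ with $P(G)\neq 0$, so $w(e)\geq 1$ for every edge. A short $4$-set computation yields $w(e)\leq 4$ always, and by extending the same analysis, if any edge has weight $4$ then all other edges must have weight $1$ (giving $P(G)=4\leq 2^{\ex(n,\{C_3,C_4\})}$ for $n\geq 4$), and two edges of weight $3$ on a common $4$-set force a sum at least $10$, so at most one weight-$3$ edge exists. This reduces the problem to the two cases where either every weight lies in $\{1,2\}$ or exactly one edge has weight $3$ and the rest are in $\{1,2\}$.

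In the first case, let $H=\{e:w(e)=2\}$; the $4$-set constraint gives $|E(H[X])|\leq 3$ for every $4$-set $X$. A structural analysis shows that such a graph on $n\geq 4$ vertices is either $\{C_3,C_4\}$-free or a vertex-disjoint union of isolated triangles with a $\{C_3,C_4\}$-free graph (since a triangle with an external neighbor would create a paw spanning $4$ edges on a $4$-set), and in both configurations $|E(H)|\leq \ex(n,\{C_3,C_4\})$ because isolated triangles contribute only one edge per vertex. In the second case, let $e=xy$ be the unique weight-$3$ edge; the $4$-set $\{x,y,z,w\}$ forces $|H\cap\{xz,xw,yz,yw,zw\}|\leq 1$ for every $z,w\in [n]\setminus\{x,y\}$, which will imply that at most one vertex $z_0\in [n]\setminus\{x,y\}$ has an $H$-neighbor in $\{x,y\}$, that at most one such $H$-edge exists, and that if $z_0$ exists then it has no $H$-neighbors in $[n]\setminus\{x,y,z_0\}$. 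The remaining $H$-edges then lie in a vertex set of size $n-3$ (or $n-2$) with at most $3$ edges per $4$-set, so by the first case $|H|\leq 1+\ex(n-3,\{C_3,C_4\})$ (respectively $\leq\ex(n-2,\{C_3,C_4\})$), and reducing $\log_2 P(G)=\log_2 3+|H|$ to $\ex(n,\{C_3,C_4\})$ comes down to the integer inequalities $\ex(n,\{C_3,C_4\})-\ex(n-3,\{C_3,C_4\})\geq 3$ and $\ex(n,\{C_3,C_4\})-\ex(n-2,\{C_3,C_4\})\geq 2$ for $n\geq 4$; these hold asymptotically since $\ex(n,\{C_3,C_4\})-\ex(n-c,\{C_3,C_4\})=\Theta(\sqrt{n})$, and must be checked by hand for small $n$. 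The main obstacle will be the structural analysis in the weight-$3$ case, together with these small-$n$ verifications.
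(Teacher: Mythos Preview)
Your proposal is correct and follows essentially the same route as the paper: the lower bound construction is identical, and for the upper bound you perform the same case analysis on the maximum multiplicity (your weight-$3$ case with the subcases ``$z_0$ exists/does not exist'' is exactly the paper's dichotomy (i)/(ii) in its lemma on extremal graphs with $\mu\ge 3$). The only cosmetic difference is in Case~1, where you state the structural decomposition of the weight-$2$ graph (disjoint isolated triangles plus a $\{C_3,C_4\}$-free part) directly, while the paper phrases the same fact as an induction on $n$ that peels off one such triangle at a time; both versions rest on the inequality $\ex(n,\{C_3,C_4\})\ge \ex(n-i,\{C_3,C_4\})+i$ together with the small-$n$ values you flag.
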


\noindent
It is known that
$$\left(\frac{1}{2\sqrt 2}+o(1)\right) n^{3/2} < \ex(n,\{C_3,C_4\})
<\left(\frac{1}{2}+o(1)\right) n^{3/2}$$
and an old conjecture of Erd\H{o}s and Simonovits \cite{Erdos1} states that the lower bound is correct.  

The next case not covered here is 
$(s,q)=(4,15)$ and it was shown in~\cite{MT415} that $\expi(n, 4, 15) = 2^{\gamma n^2+o(n^2)}$ where $\gamma$ is transcendental and $2^{\gamma}$ is also transcendental if we assume Schanuel's conjecture from number theory. Many other cases were conjectured in~\cite{MT415}
to have transcendental behaviour like the case $(4,15)$.  This suggests that determining $\expi(s,q)$ for all pairs $(s,q)$ will be a hard problem. 

\subsection{Enumeration and structure of most $(n,s,q)$-graphs}

 Combining the extremal results of Theorems \ref{casei}, \ref{caseii}, and \ref{caseiv1} with Theorem \ref{counting} we obtain Theorem \ref{mgcor*} below, which enumerates $F(n,s,q)$ for many cases of $(s,q)$. 

\begin{theorem}\label{mgcor*}
Let $s,q,a,b$ be integers satisfying $s\geq 2$, $a\geq 0$, and $q=a{s\choose 2}+b$. 
\begin{enumerate}[(i)]
\item If $0\leq b\leq s-2$, then $|F(n,s,q)|=(a+1)^{{n\choose 2}}2^{o(n^2)}$.
\item If $b={s\choose 2}-t$ where $2\leq t\leq \frac{s}{2}$, then $|F(n,s,q)|=(a+1)^{n\choose 2}(\frac{a+2}{a+1})^{t_{s-t}(n)+o(n^2)}$.
\item $|F(n,4,3)|=2^{\Theta(n^{3/2})}$.
\end{enumerate}
\end{theorem}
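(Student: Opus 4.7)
The plan is to combine Theorem \ref{counting} with the extremal results of Theorems \ref{casei}, \ref{caseii}, and \ref{caseiv1} by setting $q'=q+\binom{s}{2}$ and reading off $\expi(s,q')$ in each case.

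For part (i), $q'=(a+1)\binom{s}{2}+b$ with $0\leq b\leq s-2$ falls under Theorem \ref{casei}, giving $\expi(s,q')=a+1$. When $a\geq 1$, Theorem \ref{counting} directly yields $|F(n,s,q)|=(a+1)^{\binom{n}{2}}2^{o(n^2)}$; when $a=0$ the bound $|F(n,s,q)|\leq 2^{o(n^2)}$ combined with the trivial lower bound matches the claim. For part (ii), $q'=(a+2)\binom{s}{2}-t$ with $2\leq t\leq s/2$ fits case (b) of Theorem \ref{caseii} and yields $\expi(s,q')=(a+1)\bigl(\tfrac{a+2}{a+1}\bigr)^{(s-t-1)/(s-t)}$. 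Plugging into Theorem \ref{counting} and using $t_{s-t}(n)=\tfrac{s-t-1}{s-t}\binom{n}{2}+O(n)$, the $(1+o(1))$ exponent slack and the $O(n)$ error can both be absorbed into an $o(n^2)$ term in the exponent of the positive constant $\tfrac{a+2}{a+1}$, giving the stated formula.

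Part (iii) requires a direct argument because $\expi(4,9)=1$ makes Theorem \ref{counting}'s upper bound only $2^{o(n^2)}$. For the lower bound, I would take any subgraph of a maximum $\{C_3,C_4\}$-free graph as a $\{0,1\}$-valued multigraph: since the only $4$-vertex graphs with $\geq 4$ edges ($C_4$, $K_4-e$, $K_4$) all contain $C_3$ or $C_4$, each such subgraph lies in $F(n,4,3)$, giving $|F(n,4,3)|\geq 2^{\ex(n,\{C_3,C_4\})}=2^{\Omega(n^{3/2})}$. For the upper bound, let $G\in F(n,4,3)$ with support $H$. The same 4-vertex analysis shows $H$ is $C_4$-free and $(K_4-e)$-free, so any triangle in $H$ is isolated from the rest of $H$ and a 4-set containing it forces its three edges to have weight exactly $1$. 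Peeling off isolated triangles reduces the remaining support to a $\{C_3,C_4\}$-free graph, of which there are at most $2^{O(n^{3/2})}$ by the Kleitman--Winston method, and each such support admits at most $3^{|E(H)|}=2^{O(n^{3/2})}$ weightings in $\{1,2,3\}$. A simple recursion $f(n)\leq\binom{n}{3}f(n-3)+2^{O(n^{3/2})}$ handles the polynomial factor from placing isolated triangles and delivers $|F(n,4,3)|\leq 2^{O(n^{3/2})}$.

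The main obstacle is part (iii), where the counting theorem's threshold is crossed and one must instead exploit structural restrictions on the support of $(n,4,3)$-graphs together with the classical Kleitman--Winston enumeration of $C_4$-free graphs; parts (i) and (ii) are essentially mechanical translations of Theorem \ref{counting} using the extremal values from Theorems \ref{casei} and \ref{caseii}.
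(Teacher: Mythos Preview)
Your treatment of parts (i) and (ii) is essentially identical to the paper's: both apply Theorem~\ref{counting} after reading off $\expi(s,q+\binom{s}{2})$ from Theorems~\ref{casei} and~\ref{caseii}, with the same case split at $a=0$ in part (i).

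For part (iii) your argument is correct but more elaborate than necessary. The paper bypasses the triangle analysis entirely by observing that in any $(n,4,3)$-graph there is \emph{at most one pair} of multiplicity $\geq 2$: if two distinct pairs $e,e'$ each had multiplicity $\geq 2$, then any $4$-set containing both would have edge-sum $\geq 4$. Hence one simply chooses the heavy pair and its value (a polynomial factor) and then the set of multiplicity-one pairs, which forms a $C_4$-free graph and is counted by Kleitman--Winston. Your route instead analyses the support $H$, shows its triangles are isolated with all weights equal to $1$, peels them off recursively, and then applies Kleitman--Winston to the $\{C_3,C_4\}$-free remainder with a further $3^{|E(H)|}$ factor for weightings. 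This works (the recursion contributes only $2^{O(n\log n)}$, and $|E(H)|=O(n^{3/2})$ since $H$ is $C_4$-free), but the single-heavy-edge observation collapses the whole structural detour into two lines. What your approach does buy is a slightly more explicit picture of the support, but for the $2^{\Theta(n^{3/2})}$ bound this extra information is not needed.
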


In our last main result, Theorem \ref{mgcount2} below, we combine the stability results of Theorems \ref{casei} and \ref{caseii} with Theorem \ref{genst} to prove approximate structure theorems for many $(s,q)$.    Given $\delta>0$ and a set $E(n)\subseteq F(n,s,q)$, let $E^{\delta}(n)$ be the set of $G\in F(n,s,q)$ such that $G$ is $\delta$-close to some $G'\in E(n)$.

\begin{definition}\label{UT}
Suppose $n,a,s$ are integers such that $n,s\geq 1$.
\begin{enumerate}[(i)]
\item If $a\geq 1$, set $U_{a}(n)=\{G=([n],w): G\text{ is a submultigraph of some }G'\in \mathbb{U}_{a}(n)\}$.  
\item If $a\geq 2$, set $T_{s,a}(n)=\{G=([n],w): G\text{ is a submultigraph of some }G'\in \mathbb{T}_{s,a}(n)\}$.
\end{enumerate}
Observe that in each case, $\mathbb{U}_a(n)\subseteq U_a(n)$ and $\mathbb{T}_{s,a}(n)\subseteq T_{s,a}(n)$.
\end{definition}

\begin{theorem}\label{mgcount2}

Suppose $s,q,a,t,b$ are integers such that $n\geq s\geq 2$, and $E(n)$ is a set of multigraphs such that one of the following holds.
\begin{enumerate}[(i)]
\item $a\geq 0$, $q=a{s\choose 2}+b$ for some $0\leq b\leq s-2$, and $E(n)=U_a(n)$.  
\item $a\geq 1$, $q=a{s\choose 2}-t$ for some $1\leq t\leq \frac{s}{2}$, and $E(n)=T_{s-t,a}(n)$. 
\end{enumerate}
Then for all $\delta>0$ there exists $\beta>0$ such that for all sufficiently large $n$, 
\begin{align}\label{i}
\frac{|F(n,s,q)\setminus E^{\delta}(n)|}{|F(n,s,q)|}\leq 2^{-\beta {n\choose 2}}.
\end{align}
\end{theorem}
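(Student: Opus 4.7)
The plan is to reduce Theorem \ref{mgcount2} to the stability parts of Theorems \ref{casei} and \ref{caseii} via Theorem \ref{genst}, using the map $G \mapsto G^+$ to shift between $F(n,s,q)$ and $F(n,s,q+\binom{s}{2})$.

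First, given $\delta>0$, I will apply the stability clause of the appropriate extremal theorem at $q+\binom{s}{2}$ with tolerance $\delta/2$: since $q+\binom{s}{2} = (a+1)\binom{s}{2}+b$ in case (i) and $q+\binom{s}{2}=(a+1)\binom{s}{2}-t$ in case (ii), the hypotheses of Theorem \ref{casei} and Theorem \ref{caseii} are met with $a$ replaced by $a+1$ (noting that $a\geq 0$ gives $a+1\geq 1$ for case (i), and $a\geq 1$ gives $a+1\geq 2$ for case (ii); the side conditions on $s$ and $t$ in Theorem \ref{caseii}(a),(b) are already built into the hypotheses on $t$). This yields an $\epsilon_1>0$ such that any $H\in F(n,s,q+\binom{s}{2})$ with $P(H)>\expi(n,s,q+\binom{s}{2})^{1-\epsilon_1}$ is $(\delta/2)$-close to some reference graph $R\in \mathbb{U}_{a+1}(n)$ (case (i)) or $R\in\mathbb{T}_{s-t,a+1}(n)$ (case (ii)).

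Next, using the definition of the asymptotic product density, for all sufficiently large $n$ we have $\expi(n,s,q+\binom{s}{2}) \le \expi(s,q+\binom{s}{2})^{(1+o(1))\binom{n}{2}}$, so I can pick $\epsilon_0>0$ small enough that $\expi(s,q+\binom{s}{2})^{(1-\epsilon_0)\binom{n}{2}} \ge \expi(n,s,q+\binom{s}{2})^{1-\epsilon_1}$ for all large $n$. Apply Theorem \ref{genst} with this $\epsilon_0$: outside of a $2^{-\beta n^2}$ fraction of $F(n,s,q)$, every $G$ is a submultigraph of some $G'\in \mathbb{E}(n,s,q,\epsilon_0)$, i.e., $P(G'^+)>\expi(s,q+\binom{s}{2})^{(1-\epsilon_0)\binom{n}{2}}$. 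By the previous paragraph, $G'^+$ is $(\delta/2)$-close to some reference $R$ as above. Subtracting $1$ from every edge, $G'$ is $(\delta/2)$-close to $R^-$, and in case (i) we have $R^-=\mathbb{U}_a(n)$, while in case (ii) $R^-\in \mathbb{T}_{s-t,a}(n)$. In either case $R^-\in E(n)$ in the sense of Definition \ref{UT}.

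Finally, I need to pass from $G'$ being close to $R^-$ to $G$ itself being close to some element of $E(n)$. Define $\tilde G=([n],\tilde w)$ by $\tilde w(xy)=\min\{w_G(xy), w_{R^-}(xy)\}$. Then $\tilde G$ is a submultigraph of $R^-$, so $\tilde G\in E(n)$. Moreover, $G$ and $\tilde G$ disagree only on edges where $w_G(xy)>w_{R^-}(xy)$, and since $G\le G'$ edgewise, each such edge satisfies $w_{G'}(xy)>w_{R^-}(xy)$ and hence lies in $\Delta(G',R^-)$. Therefore $|\Delta(G,\tilde G)|\le |\Delta(G',R^-)|\le (\delta/2)n^2$, showing $G\in E^{\delta}(n)$. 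Combining with the $2^{-\beta n^2}$ bound from Theorem \ref{genst} (and absorbing the $\binom{n}{2}$ vs $n^2$ discrepancy into $\beta$) yields \eqref{i}.

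The main technical point to monitor is step 2, where the stability statements are phrased relative to $\expi(n,s,q+\binom{s}{2})$ while Theorem \ref{genst} produces bounds in terms of the asymptotic density $\expi(s,q+\binom{s}{2})$; keeping track of the $o(1)$ losses in the limit defining the density so that $\epsilon_0$ can be chosen independently of $n$ is the only delicate part, while the edgewise-min argument in the last step is routine.
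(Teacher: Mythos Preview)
Your proof is correct and follows essentially the same approach as the paper: apply the stability theorem at $q+\binom{s}{2}$ to get $\epsilon$, invoke Theorem \ref{genst}, and then transfer closeness from the container $G'$ down to $G$ by constructing a submultigraph of the reference graph. Your edgewise-min construction $\tilde w=\min\{w_G,w_{R^-}\}$ is a minor (and arguably cleaner) variant of the paper's device of zeroing out the edges in $\Delta(G',H)$; and your explicit handling of the discrepancy between $\expi(s,q+\binom{s}{2})^{(1-\epsilon_0)\binom{n}{2}}$ and $\expi(n,s,q+\binom{s}{2})^{1-\epsilon_1}$ is more careful than the paper's own write-up, which silently conflates the two.
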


\section{Proof of Theorems~\ref{mgcor*} and \ref{mgcount2}}
In this section we prove Theorems~\ref{mgcor*} and \ref{mgcount2} assuming Theorems~\ref{genst}, \ref{casei}, and \ref{caseii}.
\bigskip

\noindent
{\bf Proof of Theorem~\ref{mgcor*}.}
Suppose first that case (i) holds.  By Theorem \ref{casei} (Extremal), 
$$
\expi\Big(s,q+{s\choose 2}\Big)=\expi\Big(s, (a+1){s\choose 2}+b\Big)=a+1.
$$
If $a=0$, then $\expi(s,q+{s\choose 2})=1$, so Theorem \ref{counting} implies $|F(n,s,q)|=2^{o(n^2)}=(a+1)^{n\choose 2}2^{o(n^2)}$.  If $a\geq 1$, then $\expi(s,q+{s\choose 2})=a+1>1$, so Theorem \ref{counting} implies 
$$
|F(n,s,q)|=(a+1)^{{n\choose 2}+o(n^2)}=(a+1)^{n\choose 2}2^{o(n^2)}.
$$
Suppose now that case (ii) holds.  So $q=a{s\choose 2}+{s\choose 2}-t=(a+1){s\choose 2}-t$.  By Theorem \ref{caseii} (Extremal), 
$$
\expi\Big(s,q+{s\choose 2}\Big)=\expi\Big(s, (a+2){s\choose 2}-t\Big)=(a+1)\Big(\frac{a+2}{a+1}\Big)^{\frac{s-t-1}{s-t}}.
$$
Since $a\geq 0$, this shows $\expi(s,q+{s\choose 2})>1$, so Theorem \ref{counting} implies 
$$
|F(n,s,q)|=\Big((a+1)\Big(\frac{a+2}{a+1}\Big)^{\frac{s-t-1}{s-t}}\Big)^{{n\choose 2}+o(n^2)}=(a+1)^{n\choose 2}\Big(\frac{a+2}{a+1}\Big)^{t_{s-t}(n)+o(n^2)}.
$$
For (iii) first observe that any subgraph of a graph of girth at least 5 is a $(4,3)$-graph, and since $\ex(n, \{C_3,C_4\})\geq c_1n^{3/2}$ for some constant $c_1>0$ (see \cite{Erdos1}) we obtain the lower bound. For the upper bound, observe that in a $(4,3)$-graph, there is at most one pair with multiplicity at least two and the set of pairs with multiplicity one forms a graph with no $C_4$. By the Kleitman-Winston theorem~\cite{KW}, the number of ways to choose the pairs of multiplicity one is at most $2^{c_2 n^{3/2}}$ for some constant $c_2>0$ and this gives the upper bound. 
\qed

\bigskip

\noindent
{\bf Proof of Theorem~\ref{mgcount2}.}
Fix $\delta>0$.  Observe that if case (i) holds (respectively, case (ii)), then $(s,q+{s\choose 2})$ satisfies the hypotheses of Theorem \ref{casei} (respectively, Theorem \ref{caseii}).  Let 
\[
\mathbb{E}(n)=\begin{cases} \mathbb{U}_{a+1}(n) &\text{ in case (i)}\\
\mathbb{T}_{s-t,a+1}(n)&\text{ in case (ii)}\end{cases}\]
By Theorem \ref{casei} (Stability) in case (i) and Theorem \ref{caseii} (Stability) in case(ii), there is $\epsilon>0$ so that for sufficiently large $n$, if $G^+\in F(n,s,q+{s\choose 2})$ satisfies $P(G^+)>\expi(n,s,q+{s\choose 2})^{1-\epsilon}$, then $G^+$ is $\delta$-close to some $G'\in \mathbb{E}(n)$.  Note that $G'\in \mathbb{E}(n)$ implies there is $H\in E(n)$ such that $H^+=G$.  Combining this our choice of $\epsilon$, we obtain the following.  For all sufficienlty large $n$ and $G\in F(n,s,q)$,
\begin{align}\label{caseist}
\text{if $P(G^+)>\expi\Big(n,s,q+{s\choose 2}\Big)^{1-\epsilon}$, then $G^+$ is $\delta$-close to $H^+$, for some $H\in E(n)$}.
\end{align}
By Theorem \ref{casei} (Extremal) in case (i) and Theorem \ref{caseii} (Extremal) in case(ii), we must have that $\expi(s,q+{s\choose 2})>1$.  So Theorem \ref{genst} implies there is $\beta>0$ such that for all sufficiently large $n$ the following holds.
\begin{align*}
\frac{|F(n,s,q)\setminus E(n,s,q,\epsilon)|}{|F(n,s,q)|}\leq 2^{-\beta n^2}.
\end{align*}
So to show (\ref{i}), it suffices to show that for sufficiently large $n$, $E(n,s,q,\epsilon)\subseteq E^{\delta}(n)$.  Fix $n$ sufficiently large and suppose $G=([n],w^G)\in E(n,s,q,\epsilon)$.  By definition, this means there is $G'\in F(n,s,q)$ such that $P(G'^+)>\expi(n,s,q+{s\choose 2})^{1-\epsilon}$ and $G$ is a submultigraph of $G'$.  By (\ref{caseist}), $G'^+$ is $\delta$-close to $H^+$, for some $H\in E(n)$.  Define $H'=([n],w^{H'})$ such that $w^{H'}(xy)=w^G(xy)$ if $xy\in {[n]\choose 2}\setminus \Delta(G', H)$, and $w^{H'}(xy)=0$ if $xy\in \Delta(G',H)$.  We claim $H'$ is a submultigraph of $H$.  Fix $xy\in {[n]\choose 2}$.  We want to show $w^{H'}(xy)\leq w^H(xy)$.  If $xy\in \Delta(G',H)$, then $w^{H'}(xy)=0\leq w^H(xy)$ is immediate.  If $xy\notin \Delta(G',H)$, then $w^{H'}(xy)=w^G(xy)\leq w^{G'}(xy)=w^H(xy)$, where the inequality is because $G$ is a submultigraph of $G'$ and the last equality is because $xy\notin \Delta(G',H)$.  Thus $H'$ is a submultigraph of $H\in E(n)$, which implies $H'$ is also in $E(n)$.  By definition of $H'$, $\Delta(G,H')\subseteq \Delta(G',H)=\Delta(G'^+,H^+)$.  Since $G'^+$ and $H^+$ are $\delta$-close, this implies $|\Delta(G,H')|\leq \delta n^2$, and $G\in E^{\delta}(n)$. 
\qed

\section{Proof of Theorem \ref{genst}}\label{aasection}

In this section we prove Theorem \ref{genst}.  We will use Theorem \ref{containerscor} below, which is a version of the hypergraph containers theorem of \cite{Baloghetal1, saxton-thomason} for multigraphs.  Theorem \ref{containerscor} was proved in \cite{MT415}.

\begin{definition}
Suppose $s\geq 2$ and $q\geq 0$ are integers.  Set 
$$
\calH(s,q)=\{G=([s],w):\mu(G)\leq q\text{ and }S(G)>q\},\quad \text{ and }\qquad g(s,q)=|\calH(s,q)|.
$$
If $G=(V,w)$ is a multigraph, let $\calH(G,s,q)=\{X\in {V\choose s}: G[X]\cong G'\text{ for some }G'\in \calH(s,q)\}$. 
\end{definition}

\begin{theorem}\label{containerscor}
For every $0<\delta<1$ and integers $s\geq 2$, $q\geq 0$, there is a constant $c=c(s,q,\delta)>0$ such that the following holds.  For all sufficiently large $n$, there is $\mathcal{G}$ a collection of multigraphs of multiplicity at most $q$ and with vertex set $[n]$ such that
\begin{enumerate}[(i)]
\item for every $J\in F(n,s,q)$, there is $G\in \mathcal{G}$ such that $J$ is a submultigraph of $G$,
\item for every $G\in \mathcal{G}$, $|\calH(G,s,q)|\leq \delta {n\choose s}$, and 
\item $\log |\mathcal{G}| \leq cn^{2-\frac{1}{4s}}\log n$.
\end{enumerate}
\end{theorem}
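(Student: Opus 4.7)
The plan is to apply the hypergraph container theorem of Saxton--Thomason (or Balogh--Morris--Samotij) to an auxiliary hypergraph whose transversal independent sets correspond exactly to $(n,s,q)$-graphs, and then translate each resulting container into a multigraph.

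I set $V^* = \binom{[n]}{2} \times \{0, 1, \ldots, q\}$ and associate to each multigraph $G = ([n], w)$ with $\mu(G) \le q$ the transversal $T(G) = \{(xy, w(xy)) : xy \in \binom{[n]}{2}\}$. For each $X \in \binom{[n]}{s}$ and each ``bad'' assignment $M : \binom{X}{2} \to \{0, 1, \ldots, q\}$ with $\sum_{xy} M(xy) > q$, I add the hyperedge $E_{X, M} = \{(xy, M(xy)) : xy \in \binom{X}{2}\}$, of size $\binom{s}{2}$, to a hypergraph $\calH^*$. Then $G \in F(n, s, q)$ precisely when $T(G)$ is an independent set in $\calH^*$. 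This hypergraph has $N = (q+1)\binom{n}{2} = \Theta(n^2)$ vertices and $e(\calH^*) = \Theta(n^s)$ hyperedges of uniform size $\binom{s}{2}$; moreover, for any $j$-subset $T \subseteq V^*$ whose first coordinates (viewed as elements of $\binom{[n]}{2}$) together involve $v(T)$ distinct vertices of $[n]$, the number of hyperedges of $\calH^*$ containing $T$ is $O(n^{s - v(T)})$.

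Selecting a container parameter $\tau = \tau(n)$ that balances these co-degrees against the average degree $\bar d = \Theta(n^{s - 2})$, an application of the Saxton--Thomason theorem produces a family $\calC$ of subsets of $V^*$ such that (a) every independent set of $\calH^*$ lies in some $C \in \calC$; (b) every $C$ spans at most $\delta' \cdot e(\calH^*)$ hyperedges, for a constant $\delta' = \delta'(\delta, s, q) > 0$; and (c) $\log |\calC| = O(n^{2 - 1/(4s)} \log n)$. For each $C \in \calC$, define $G_C = ([n], w_C)$ by $w_C(xy) = \max\{i : (xy, i) \in C\}$ (and $0$ if no such $i$ exists) and set $\mathcal{G} = \{G_C : C \in \calC\}$; clearly $\mu(G_C) \le q$. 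Conclusion (i) follows because any $J \in F(n, s, q)$ satisfies $T(J) \subseteq C$ for some $C$, which forces $w_J(xy) \in \{i : (xy, i) \in C\}$ and hence $w_J(xy) \le w_C(xy)$ for every pair, so $J$ is a submultigraph of $G_C$. Conclusion (iii) is the bound on $|\calC|$. For conclusion (ii), each $X \in \calH(G_C, s, q)$ yields the hyperedge $E_{X, w_C|_{\binom{X}{2}}} \subseteq T(G_C) \subseteq C$, and since distinct $X$ yield distinct hyperedges, $|\calH(G_C, s, q)| \le e(\calH^*[C]) \le \delta' \cdot e(\calH^*) = O(\delta' \binom{n}{s})$; choosing $\delta'$ small enough in terms of $\delta$, $s$, $q$ gives $|\calH(G_C, s, q)| \le \delta \binom{n}{s}$.

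The main obstacle is the co-degree calculation that determines the admissible $\tau$: one must bound $\Delta_j(\calH^*)$ for each $j$ from $2$ up to $\binom{s}{2}$, and the tightest constraint, typically arising from $j$-sets whose underlying pairs occupy as few vertices of $[n]$ as possible, dictates the best exponent. Taking $\tau$ of order $n^{-1/(2s)}$ suffices and, combined with the $\log(1/\tau) = \Theta(\log n)$ factor in the Saxton--Thomason bound, comfortably delivers the claimed exponent $2 - 1/(4s)$. Once $\tau$ is fixed, invoking the container theorem and translating its output back into multigraphs are both routine.
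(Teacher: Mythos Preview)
The paper does not prove this theorem here; it merely quotes it from the authors' earlier paper \cite{MT415}, so there is no in-paper proof to compare against.  Your approach---encoding each multigraph of multiplicity at most $q$ as a transversal in the $\binom{s}{2}$-uniform hypergraph on $\binom{[n]}{2}\times\{0,\dots,q\}$, applying the Saxton--Thomason/Balogh--Morris--Samotij container theorem, and decoding each container $C$ as the multigraph $G_C$ with $w_C(xy)=\max\{i:(xy,i)\in C\}$---is the standard derivation of such a statement and is what one expects the argument in \cite{MT415} to be.

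Two small remarks on the proposal itself.  First, your verification of (ii) invokes $T(G_C)\subseteq C$, which requires that every pair $xy$ have at least one colour present in $C$; this is automatic for any container that actually covers some $T(J)$ with $J\in F(n,s,q)$, so you should simply discard containers that do not (they are irrelevant to (i) anyway).  Second, with $\tau=n^{-1/(2s)}$ the container bound gives $\log|\calC|=O(N\tau\log(1/\tau))=O(n^{2-1/(2s)}\log n)$, which is stronger than the stated $n^{2-1/(4s)}\log n$; the binding co-degree constraint occurs at $j=\binom{s}{2}$ (where the $j$ underlying pairs can sit on just $s$ vertices of $[n]$) and in fact permits $\tau$ as small as $n^{-2/(s+1)}$, but any larger $\tau$ works and the exponent in the theorem is not claimed to be sharp.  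Neither point affects the validity of your argument.
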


We will also use the following two results appearing in \cite{MT415}.  

\begin{lemma}[Lemma 1 of \cite{MT415}]\label{triangleremoval1}
Fix integers $s\geq 2$ and $q\geq 0$.  For all $0<\nu<1$, there is $0<\delta<1$ such that for all sufficiently large $n$, the following holds.  If $G=([n],w)$ satisfies $\mu(G)\leq q$ and $|\calH(G,s,q)|\leq \delta{n\choose 2}$, then $G$ is $\nu$-close to some $G'$ in $F(n,s,q)$.
\end{lemma}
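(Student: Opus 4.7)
The plan is a direct edge-deletion argument, without any recourse to regularity-type machinery. Write $B=\calH(G,s,q)\subseteq {[n]\choose s}$ for the set of ``bad'' $s$-subsets (those with $G[X]\cong G''$ for some $G''\in \calH(s,q)$), and let $E^*=\bigcup_{X\in B}{X\choose 2}$ be the collection of pairs that sit inside at least one bad $s$-set. Define $G'=([n],w')$ by setting $w'(e)=0$ for $e\in E^*$ and $w'(e)=w(e)$ otherwise. The goal will then be to show that for $\delta$ chosen sufficiently small in terms of $s$ and $\nu$, this $G'$ lies in $F(n,s,q)$ and is $\nu$-close to $G$.

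To verify $G'\in F(n,s,q)$, fix $X\in {[n]\choose s}$ and split into two cases. If $X\in B$, then every edge of ${X\choose 2}$ has been zeroed, so $\sum_{e\in {X\choose 2}} w'(e)=0\leq q$. If $X\notin B$, then since $\mu(G)\leq q$ implies $\mu(G[X])\leq q$, the only way for $G[X]$ not to lie in $\calH(s,q)$ is $\sum_{e\in {X\choose 2}} w(e)\leq q$; as $w'\leq w$ pointwise, the corresponding sum in $G'$ is also at most $q$. Crucially, the deletion cannot create any new bad subsets, because $S(G'[X])$ is monotone non-decreasing in each individual multiplicity. Hence no $s$-subset violates the $(s,q)$-condition in $G'$.

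Finally, the number of altered pairs is $|\Delta(G,G')|\leq |E^*|\leq {s\choose 2}|B|\leq {s\choose 2}\delta{n\choose 2}$. Choosing $\delta$ to be a small enough constant in terms of $s$ and $\nu$ (e.g.\ $\delta=\nu/{s\choose 2}$) forces $|\Delta(G,G')|\leq \nu n^2$, which is precisely what is needed to conclude $G$ is $\nu$-close to $G'$.

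I do not expect a serious obstacle here: the hypothesis $|B|\leq \delta{n\choose 2}$ is already strong enough that the naive simultaneous zeroing of every edge that sits inside a bad $s$-set both destroys every bad $s$-subset (by monotonicity) and changes only $O(\delta n^2)$ pairs. No removal-lemma or regularity-type machinery is required; indeed the bound $\delta{n\choose 2}$ is much stronger than the $\delta{n\choose s}$ bound one might expect from a genuine removal-type statement, which is precisely why the elementary greedy deletion suffices.
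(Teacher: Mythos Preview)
Your argument is correct for the statement exactly as written, with the hypothesis $|\calH(G,s,q)|\leq \delta{n\choose 2}$. The paper does not supply its own proof of this lemma---it is quoted from \cite{MT415}---so there is no in-paper argument to compare against.

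That said, your closing paragraph has identified something real. The bound $\delta{n\choose 2}$ in the statement is almost certainly a typo for $\delta{n\choose s}$: when the lemma is invoked in the proof of Theorem~\ref{genst}, the hypothesis actually available from the containers theorem is $|\calH(G,s,q)|\leq \delta{n\choose s}$, and the companion Lemma~\ref{triangleremoval} later in the paper states the same result with ${n\choose s}$. With ${n\choose s}$ in place of ${n\choose 2}$, your greedy zeroing no longer suffices, since it could alter up to ${s\choose 2}\delta{n\choose s}\gg n^2$ pairs when $s\geq 3$; one then genuinely needs the hypergraph removal lemma (applied, say, to the ${s\choose 2}$-uniform hypergraph encoding the multigraph). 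So your elementary proof dispatches the literal statement, but the intended and used statement is a removal-lemma consequence, exactly as you suspected.
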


\begin{proposition}[Proposition 2 in \cite{MT415}]\label{density}
For all $n\geq s\geq 2$ and $q\geq 0$, $\expi(s,q)$ exists and $\expi(n,s,q)\geq \expi(s,q)^{n\choose 2}$.  If $q\geq {s\choose 2}$, then $\expi(s,q)\geq 1$.
\end{proposition}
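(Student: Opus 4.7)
The plan is a Fekete-type argument. Define $f(n) := \binom{n}{2}^{-1}\log \expi(n,s,q)$ for $n \geq s$ (interpreting $\log 0 = -\infty$). I will show that $f$ is monotonically non-increasing in $n$ and, when $q \geq \binom{s}{2}$, bounded below by $0$. Both existence of the limit and the inequality $\expi(n,s,q) \geq \expi(s,q)^{\binom{n}{2}}$, which is just the statement $f(n) \geq \lim f$ for a non-increasing sequence, then follow at once.

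For monotonicity, fix $s \leq m \leq n$ and choose a product-extremal $G = ([n], w) \in F(n,s,q)$; I may assume $P(G) > 0$ (otherwise $f(n) = -\infty$ and there is nothing to check), so all multiplicities are positive. Sample a uniformly random $m$-subset $S \subseteq [n]$. The induced submultigraph $G[S]$ lies in $F(m, s, q)$, so $P(G[S]) \leq \expi(m, s, q)$ pointwise. Using the fact that $\Pr[xy \in \binom{S}{2}] = \binom{m}{2}/\binom{n}{2}$ for every pair $xy \in \binom{[n]}{2}$, together with linearity of expectation,
\begin{equation*}
\frac{\binom{m}{2}}{\binom{n}{2}} \log \expi(n,s,q) \;=\; \mathbb{E}\big[\log P(G[S])\big] \;\leq\; \log \expi(m,s,q),
\end{equation*}
which rearranges to $f(n) \leq f(m)$.

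For the lower bound, when $q \geq \binom{s}{2}$ the constant-$1$ multigraph on $[n]$ is an $(n,s,q)$-graph with product $1$, so $\expi(n,s,q) \geq 1$ and $f(n) \geq 0$ for every $n \geq s$. Combined with monotonicity, $\lim_{n\to\infty} f(n)$ exists in $[0,\infty)$, and setting $\expi(s,q) := \exp(\lim f)$ matches Definition~3, yields $\expi(s,q) \geq 1$, and gives the desired $\expi(n,s,q) \geq \expi(s,q)^{\binom{n}{2}}$. When $q < \binom{s}{2}$, any multigraph with all multiplicities at least $1$ contains an $s$-set of weight $\geq \binom{s}{2} > q$, so every $(n,s,q)$-graph with $n \geq s$ has some edge of multiplicity $0$; hence $\expi(n,s,q) = 0$, the limit is $0$, and both assertions of the proposition are immediate (with the final claim vacuous). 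The only delicate bookkeeping point in the whole argument is the handling of $\log 0$ in the averaging step, which is disposed of by the positivity reduction above; I do not foresee any genuine obstacle beyond this.
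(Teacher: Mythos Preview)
Your proof is correct. Note, however, that the paper does not actually prove this proposition: it is quoted verbatim as ``Proposition~2 in \cite{MT415}'' and used as a black box. The averaging argument you give---sampling a uniformly random $m$-subset of $[n]$ and using linearity of expectation on $\log P$ to show $\expi(n,s,q)^{1/\binom{n}{2}}$ is non-increasing in $n$---is the standard Katona--Nemetz--Simonovits-style proof that such extremal densities exist, and is almost certainly what \cite{MT415} does as well. Your handling of the boundary cases ($P(G)=0$ and $q<\binom{s}{2}$) is clean and correct.
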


\noindent {\bf Proof of Theorem \ref{genst}.}
Fix $\epsilon>0$ and set $\nu=(\epsilon \log(\expi(s,q+{s\choose 2}))/(8\log (q+1))$.  Choose $\delta>0$ according to Lemma \ref{triangleremoval1} so that the following holds for all sufficiently large $n$. 
\begin{align}\label{tm}
\text{ Any $G=([n],w)$ with $\mu(G)\leq q$ and $|\calH(G,s,q)|\leq \delta {n\choose 2}$ is $\nu$-close to some $G'$ in $F(n,s,q)$}.
\end{align}
Fix $n$ sufficiently large.  Apply Theorem \ref{containerscor} to obtain a constant $c$ and a collection $\mathcal{G}$ of multigraphs of multiplicity at most $q$ and with vertex set $[n]$ satisfying (i)-(iii) of Theorem \ref{containerscor}.  Suppose that $H=([n],w^H)\in F(n,s,q)\setminus E(n,s,q,\epsilon)$.  By (i), there is $G=([n],w^G)\in \calG$ such that $H$ is a submultigraph of $G$ and $|\calH(G,s,q)|\leq \delta {n\choose s}$.  We claim that $P(G^+)\leq \expi(n,s,q+{s\choose 2})^{1-\epsilon/2}$.  Suppose towards a contradiction this is not the case, so $P(G^+)> \expi(n,s,q+{s\choose 2})^{1-\epsilon/2}$.  By (\ref{tm}), $|\calH(G,s,q)|\leq \delta {n\choose s}$ implies there is $G'=([n],w^{G'})\in F(n,s,q)$ which is $\nu$-close to $G$.  Define $H'=([n],w^{H'})$ by setting $w^{H'}(xy)=w^H(xy)$ for all $xy\in {[n]\choose 2}\setminus \Delta(G,G')$ and $w^{H'}(xy)=0$ for all $xy\in \Delta(G,G')$.  By construction and because $H'$ is a submultigraph of $G'$, we have that $H$ is also a submultigraph of $G'$.  Observe
$$
P(G'^+)=P(G^+)\Big(\prod_{xy\in \Delta(H,H')}\frac{w^{G'}(xy)+1}{w^G(xy)+1}\Big)\geq P(G^+)(q+1)^{-|\Delta(G,G')|},
$$
where the inequality is because $1\leq w^{G'}(xy)+1, w^G(xy)+1\leq q+1$ implies $\frac{w^{G'}(xy)+1}{w^G(xy)+1}\geq \frac{1}{q+1}$.  Combining this with the fact that $G$ and $G'$ are $\nu$-close, the definition of $\nu$, and our assumption that $P(G^+)\geq \expi(n,s,q+{s\choose 2})^{1-\epsilon/2}$, we have that $P(G'^+)$ is at least the following.
$$
P(G^+)(q+1)^{-\nu n^2}=P(G^+)\expi\Big(s,q+{s\choose 2}\Big)^{-\epsilon n^2/8}\geq \expi\Big(n,s,q+{s\choose 2}\Big)^{1-\epsilon/2}\expi\Big(s,q+{s\choose 2}\Big)^{-\epsilon n^2/8}.
$$
Since $\expi(n,s,q+{s\choose 2})^{1/{n\choose 2}}\geq \expi(s,q+{s\choose 2})$ (see Proposition \ref{density}), we obtain that  the right hand side is at least 
$$
\expi\Big(n,s,q+{s\choose 2}\Big)^{1-\epsilon/2}\expi\Big(n,s,q+{s\choose 2}\Big)^{-\epsilon n^2/(8{n\choose 2})}\geq \expi\Big(n,s,q+{s\choose 2}\Big)^{1-\epsilon},
$$
where the inequality is because $n$ large implies $\epsilon n^2/(8{n\choose 2})\leq \epsilon/2$.  But now $H$ is a submultigraph of $G'$ and $P(G'^+)\geq \expi(n,s,q+{s\choose 2})^{1-\epsilon}$, contradicting that $H\in F(n,s,q)\setminus E(n,s,q,\epsilon)$.  Therefore, every element of $F(n,s,q)\setminus E(n,s,q,\epsilon)$ can be constructed as follows.
\begin{enumerate}[$\bullet$]
\item Choose some $G\in \calG$ with $P(G^+)\leq \expi(n,s,q+{s\choose 2})^{1-\epsilon/2}$.  There are at most $cn^{2-\frac{1}{4s}}\log n$ choices.  Since $n$ is large and $\expi(s,q+{s\choose 2})>1$, we may assume $cn^{2-\frac{1}{4s}}\log n\leq \expi(s,q+{s\choose 2})^{\epsilon {n\choose 2}/4}$.
\item Choose a submultigraph of $G$.  There are at most $P(G^+)\leq \expi(n,s,q+{s\choose 2})^{1-\epsilon/2}$ choices.
\end{enumerate}
This shows
\begin{align*}
|F(n,s,q)\setminus E(n,s,q,\epsilon)|&\leq \expi\Big(s,q+{s\choose 2}\Big)^{\epsilon {n\choose 2}/4} \expi\Big(n,s,q+{s\choose 2}\Big)^{1-\epsilon/2}\\
&\leq \expi\Big(s,q+{s\choose 2}\Big)^{-\epsilon {n\choose 2}/4}\expi\Big(n,s,q+{s\choose 2}\Big),
\end{align*}
where the second inequality is because $\expi(n,s,q+{s\choose 2})\geq \expi(s,q+{s\choose 2})^{n\choose 2}$. By Theorem \ref{counting}, $|F(n,s,q)|\geq \expi(n,s,q)$, so this implies 
\begin{align*}
\frac{|F(n,s,q)\setminus E(n,s,q,\epsilon)|}{|F(n,s,q)|}&\leq \expi\Big(s,q+{s\choose 2}\Big)^{-\epsilon {n\choose 2}/4}.
\end{align*}
Setting $\beta=\frac{\epsilon}{4}\log_2(\expi(s,q+{s\choose 2})$ finishes the proof (note $\beta>0$ since $\expi(s,q+{s\choose 2})>1$).
\qed

\section{Extremal Results}

In this section we prove the extremal statements in Theorems \ref{casei} and \ref{caseii}.  We begin with some preliminaries.  Suppose $s\geq 2$ and $q\geq {s\choose 2}$.  It was shown in \cite{furedikundgen} that $\exs(s,q)$ exists, and the AM-GM inequality implies that 
\begin{align}\label{inequality}
\expi(s,q)=\lim_{n\rightarrow \infty} \expi(n,s,q)^{1/{n\choose 2}}\leq \lim_{n\rightarrow \infty} \frac{\exs(n,s,q)}{{n\choose 2}} = \exs(s,q).
\end{align}
The following lemma is an integer version of the AM-GM inequality.

\begin{lemma}\label{arithgeom}
If $\ell \geq 2$, $k\in [\ell]$ and $a, x_1,\ldots, x_{\ell}$ are positive integers such that $\sum_{i=1}^{\ell}x_i \leq a{\ell}-k$, then $\prod_{i=1}^{\ell}x_i\leq a^{\ell-k}(a-1)^k$.  Moreover, equality holds if and only if exactly $k$ of the $x_i$ are equal to $a-1$ and the rest are equal to $a$.
\end{lemma}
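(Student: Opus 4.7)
The plan is to prove the lemma by a two-step reduction: first reducing to the case of equality in the sum constraint, then applying a local exchange (smoothing) argument to pin down the optimal configuration.

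First I would observe that since every $x_i$ is a positive integer, incrementing any one of them by $1$ multiplies the product by $(x_i+1)/x_i > 1$, so the product is strictly monotone in each coordinate. Therefore, whenever $\sum x_i < a\ell - k$, one can bump some $x_j$ up by one and strictly increase the product while preserving the hypothesis; so it suffices to prove the bound (and characterize equality) under the assumption $\sum_{i=1}^{\ell} x_i = a\ell - k$.

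Next, I would apply the following exchange: if there exist indices $i \neq j$ with $x_i \geq x_j + 2$, replace $(x_i, x_j)$ by $(x_i - 1, x_j + 1)$. This preserves positivity (since $x_j + 1 \geq 2$) and the sum, while the product changes by the factor $(x_i-1)(x_j+1)/(x_ix_j) = 1 + (x_i - x_j - 1)/(x_ix_j) > 1$. Hence any maximizer must have all $x_i$ lying in $\{m, m+1\}$ for some integer $m \geq 1$. Writing $k'$ for the number of coordinates equal to $m$ and $\ell-k'$ for the number equal to $m+1$, the equation $m\ell + (\ell - k') = a\ell - k$ rearranges to $(m+1-a)\ell = k' - k$. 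Since $0 \leq k' \leq \ell$ and $1 \leq k \leq \ell$, the quantity $k'-k$ is a multiple of $\ell$ lying in $[-\ell, \ell-1]$, which forces $k'-k \in \{0, -\ell\}$. The case $k'-k = 0$ gives $m = a-1$ with exactly $k$ entries equal to $a-1$ and $\ell-k$ equal to $a$; the case $k'-k = -\ell$ occurs only when $k = \ell$ and $k' = 0$, making every entry equal to $a-1$, which is the same multiset. Either way the product is $a^{\ell-k}(a-1)^k$, and every extremal tuple has the stated form, giving the equality characterization.

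The main obstacle is merely bookkeeping: ensuring the exchange preserves positivity and correctly eliminating the spurious alternative $k'-k = -\ell$, which collapses into the desired case when $k = \ell$. Once the smoothing step has reduced the $x_i$ to at most two consecutive values, the divisibility consequence of the sum constraint identifies the configuration uniquely (up to reordering), so the lemma follows.
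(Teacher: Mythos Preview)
Your proof is correct and follows essentially the same smoothing (exchange) argument as the paper: both show that a maximizing tuple must have all entries within one of each other, and then identify the resulting configuration. Your write-up is more explicit than the paper's (which only states the exchange step), spelling out the reduction to the equality case $\sum x_i = a\ell - k$ and the arithmetic pinning down $m = a-1$ and $k' = k$; this added detail is fine but not a genuinely different approach.
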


\begin{proof}
If there are $x_i$ and $x_j$ with $x_i<x_j-1$, then replacing $x_i$ with $x_i+1$ and replacing $x_j$ with $x_j-1$ increases the product and keeps the sum unchanged.  So no two of the $x_i$'s differ by more than one when the product is maximized.
\end{proof}

\begin{corollary}\label{sumimpliesproduct}
Let $n\geq s\geq 2$, $a\geq 2$, and $(a-1){s\choose 2}\leq q<a{s\choose 2}$.  Suppose $G\in \calS(n,s,q)$ has all edge multiplicities in $\{a,a-1\}$ and contains exactly $k$ edges of multiplicity $a-1$.  Then for all other $G'\in F(n,s,q)$, $G'\in \calP(n,s,q)$ if and only if $G'$ has  $k$ edges of multiplicity $a-1$ and all other edges of multiplicity $a$.  Consequently, $G\in \calP(n,s,q)\subseteq \calS(n,s,q)$.
\end{corollary}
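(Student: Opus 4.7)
The plan is to deduce this directly from the integer AM--GM bound of Lemma \ref{arithgeom} applied to the multiset of edge multiplicities of an arbitrary $(n,s,q)$-graph. Set $\ell=\binom{n}{2}$. The hypotheses on $G$ give $S(G)=a\ell-k$, so because $G\in\calS(n,s,q)$, every $G'\in F(n,s,q)$ satisfies $S(G')\le a\ell-k$. Before invoking the lemma I would verify that $k\ge 1$, since the lemma requires $k\in[\ell]$. Double counting pairs through $s$-sets gives $\binom{n-2}{s-2}S(G')\le q\binom{n}{s}$, hence $S(G')\le q\binom{n}{2}/\binom{s}{2}<a\binom{n}{2}$ because $q<a\binom{s}{2}$; this forces $k\ge 1$.

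For arbitrary $G'\in F(n,s,q)$ I would split into two cases. If some pair of $G'$ has multiplicity $0$, then $P(G')=0<a^{\ell-k}(a-1)^k=P(G)$ (using $a\ge 2$, so $a-1\ge 1$), and $G'$ cannot be product-extremal. Otherwise, the $\ell$ edge multiplicities of $G'$ are positive integers summing to at most $a\ell-k$, and Lemma \ref{arithgeom} yields $P(G')\le a^{\ell-k}(a-1)^k=P(G)$, with equality precisely when exactly $k$ of the multiplicities equal $a-1$ and the remaining $\ell-k$ equal $a$.

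Combining both cases shows $\expi(n,s,q)=P(G)$, so $G\in\calP(n,s,q)$, and the equality clause of Lemma \ref{arithgeom} gives the biconditional: $G'\in\calP(n,s,q)$ if and only if $G'$ has exactly $k$ edges of multiplicity $a-1$ and all other edges of multiplicity $a$ (the converse direction is immediate since any such $G'$ achieves $P(G')=P(G)=\expi(n,s,q)$). Finally, any such $G'$ automatically satisfies $S(G')=a\ell-k=\exs(n,s,q)$, so $G'\in\calS(n,s,q)$, giving the containment $\calP(n,s,q)\subseteq\calS(n,s,q)$.

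I do not foresee a significant obstacle: the argument is essentially bookkeeping around Lemma \ref{arithgeom}, once one isolates the two small observations that a pair of multiplicity $0$ rules out product-extremality, and that the strict inequality $q<a\binom{s}{2}$ forces $k\ge 1$ so the parameter in Lemma \ref{arithgeom} lies in the required range.
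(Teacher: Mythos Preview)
Your proof is correct and follows essentially the same route as the paper's: apply Lemma~\ref{arithgeom} with $\ell=\binom{n}{2}$ to the edge multiplicities of an arbitrary $G'\in F(n,s,q)$, using $S(G')\le S(G)=a\ell-k$, and read off the equality characterization. You are in fact more careful than the paper in explicitly verifying the hypotheses of Lemma~\ref{arithgeom} (positivity of the $x_i$ via the case split on a zero-multiplicity pair, and $k\ge 1$ via the double-counting bound), but the core argument is identical.
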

\begin{proof}
Fix $G$ so that the hypotheses hold.  Then $S(G)=a{n\choose 2}-k$ and $P(G)=a^{{n\choose 2}-k}(a-1)^k$.  Let $G'=([n],w)$ be another element of $F(n,s,q)$.  Since $G\in \calS(n,s,q)$, we have
$$
S(G')\leq S(G)= a{n\choose 2}-k.
$$
By Lemma \ref{arithgeom} with $\ell={n\choose 2}$, $P(G')\leq a^{{n\choose 2}-k}(a-1)^k$ with equality if and only if $\{w(xy): xy\in {[n]\choose 2}\}$ consists of $k$ elements equal to $a-1$ and the rest equal to $a$.  This shows $G'\in \calP(n,s,q)$ if and only if $G'$ has $k$ edges of multiplicity $a-1$ and the rest of multiplicity $a$.  Consequently, $G\in \calP(n,s,q)$.  To show $\calP(n,s,q)\subseteq \calS(n,s,q)$, let $G'\in \calP(n,s,q)$.  Then by what we have shown, $S(G')=a{n\choose 2}-k=S(G)$, so $G\in \calS(n,s,q)$ implies $G'\in \calS(n,s,q)$.
\end{proof}

The following is a consequence of Theorem 5.2 in \cite{bondytuza} (case $b=0$) and Theorems 8 and 9 in \cite{furedikundgen} (cases $0<b\leq s-2$).
\begin{theorem}[{\bf Bondy-Tuza \cite{bondytuza}, F\"{u}redi-K\"{u}ndgen \cite{furedikundgen}}]\label{bleqs-2}
Let $n\geq s\geq 2$, $a\geq 1$, $0\leq b\leq s-2$, and $q=a{s\choose 2}+b$.  Then 
$$
\exs(n,s,q)\leq a{n\choose 2}+\Bigg\lfloor \frac{b}{b+1}n\Bigg\rfloor.
$$
with equality holding when $b=s-2$ and when $b=0$.
\end{theorem}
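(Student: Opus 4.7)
The plan is to reformulate the bound via the \emph{excess} function $e(xy):=w(xy)-a$, under which the constraint becomes $\sum_{xy\in\binom{X}{2}} e(xy)\le b$ for every $s$-subset $X$, and the goal is to show $E:=\sum_{xy} e(xy)\le\lfloor bn/(b+1)\rfloor$. I would carry out the main argument under the simplifying assumption $e(xy)\ge 0$ on every pair, justified for extremal $G$ by a local exchange near any pair with $w(xy)<a$; pairs of negative excess require some additional care and can be handled via a direct induction that accommodates them.

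The heart of the proof is a two-step structural analysis of the (non-negative) excess graph via $s$-set extension. First I would show that each vertex $v$ satisfies $d_e(v):=\sum_{u\ne v} e(vu)\le b$: given $d_e(v)\ge b+1$, pick a minimal $N\subseteq\{u:e(vu)\ge 1\}$ with $\sum_{u\in N} e(vu)\ge b+1$; since $e(vu)\le b$ for each $u$ (itself a one-edge special case of the same argument, available because $b\le s-2$) one has $|N|\le b+1$ and hence $|\{v\}\cup N|\le b+2\le s$, so extending to an $s$-set yields excess at least $b+1$, a contradiction. Second I would prove an \emph{isolation lemma}: if $d_e(v)=b$ then $A:=\{v\}\cup\{u:e(vu)\ge 1\}$ is an isolated saturated star, meaning the only positive-excess edges meeting $A$ are the star edges $\{vu:u\in A\setminus\{v\}\}$ themselves. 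This follows by adjoining the free endpoint of an offending excess edge to $A$ and extending to an $s$-set, using $|A|+1\le b+2\le s$.

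The counting step then proceeds by induction on $n$, peeling off an isolated saturated star whenever one exists. When a saturated star on $|A|=b+1$ vertices is present, the isolation lemma gives $E(G)=E(G-A)+b$ and the inductive bound closes cleanly since $\lfloor bn/(b+1)\rfloor-\lfloor b(n-b-1)/(b+1)\rfloor=b$. When a saturated star exists with $|A|<b+1$, there is room in the $s$-set to fit both endpoints of any other excess edge (since $|A|+2\le b+2\le s$), which forces the rest of the excess to vanish and gives $E=b\le\lfloor bn/(b+1)\rfloor$ for $n\ge b+1$. If no saturated star exists then $d_e<b$ everywhere.

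The main obstacle will be this last, star-free case when $b\ge 3$: the crude bound $E\le(b-1)n/2$ from $d_e\le b-1$ exceeds the target $bn/(b+1)$, so one must identify and bound longer high-excess substructures (such as paths or small trees whose interior vertices are almost-saturated) via further $s$-set extension arguments. A secondary subtlety is the boundary case $b=s-2$, where every extension runs with zero slack and each inequality must be made sharply; this is exactly where the bound is tight, realised by the construction $\mathbb{U}_{s-1,a}(n)$ of vertex-disjoint stars of size $b+1$.
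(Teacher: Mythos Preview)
The paper does not supply a proof of this theorem; it is imported from the literature (Theorem~5.2 of Bondy--Tuza for $b=0$, Theorems~8 and~9 of F\"uredi--K\"undgen for $0<b\le s-2$), so there is no in-paper argument to compare your proposal against.

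Evaluated on its own, your proposal handles $b\le 2$ cleanly but has two genuine gaps.  The more serious one you flag yourself: in the star-free regime with $b\ge 3$ the bound $d_e\le b-1$ gives only $E\le(b-1)n/2>bn/(b+1)$, and your suggestion of ``further $s$-set extension arguments'' is not an argument.  This gap is real, not an artefact of a loose estimate.  For $b=s-2\ge 3$ and $(b+1)\mid n$, take the excess graph to be $n/(b+1)$ vertex-disjoint paths of length $b$.  Every vertex has $d_e\le 2<b$, so the configuration is star-free; any $s=b+2$ vertices meet at least two of the paths and hence span at most $(b+2)-2=b$ excess edges; and yet $E=bn/(b+1)$ exactly.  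So the star-free case genuinely attains the extremal value, and your peeling strategy must be broadened (for instance, to peel any $(b+1)$-vertex block carrying total incident excess $b$, not only stars centred at a saturated vertex) before it can close.

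The second gap is the reduction to non-negative excess.  Your local exchange --- raise $w(xy)$ with $e(xy)<0$ while lowering some $w(uv)$ with $e(uv)\ge 1$ inside a tight $s$-set through $xy$ --- need not remain in $F(n,s,q)$: there may be a \emph{different} tight $s$-set containing $xy$ but not $uv$, which the exchange then violates.  Without non-negativity, every ``extend to an $s$-set and read off excess $\ge b+1$'' step in your degree bound and isolation lemma can fail, since the pairs you adjoin may carry negative excess that absorbs the surplus you are trying to exhibit.  This is not fatal to the overall strategy, but it is not handled by what you wrote.
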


\noindent {\bf Proof of Theorem \ref{casei} (Extremal).}
Since $\mathbb{U}_a(n)\in F(n,s,q)$, $a^{n\choose 2}\leq \expi(n,s,q)$.  On the other hand, let $G\in F(n,s,q)$.  Theorem \ref{bleqs-2} implies that $S(G)\leq a{n\choose 2}+\lfloor \frac{b}{b+1}n\rfloor$.  This along with Lemma \ref{arithgeom} implies that $P(G)\leq a^{n\choose 2}((a+1)/a)^{\lfloor \frac{b}{b+1}n\rfloor}$. Thus $a^{n\choose 2}\leq \expi(n,s,q)\leq  a^{n\choose 2}((a+1)/a)^{\lfloor \frac{b}{b+1}n\rfloor}$, which implies $\expi(s,q)=a$.  

Case (a): If $b=0$, then Theorem \ref{bleqs-2} implies $\mathbb{U}_a(n)\in \calS(n,s,q)$.  Because $\mathbb{U}_a(n)$ has all edge multiplicities in $\{a\}$, Corollary \ref{sumimpliesproduct} implies $\mathbb{U}_a(n)\in \calP(n,s,q)$ and moreover, every other element of $\calP(n,s,q)$ has all edges of multiplicity $a$.  In other words, $\{\mathbb{U}_a(n)\}=\calP(n,s,q)$, so $\expi(n,s,q)=a^{n\choose 2}$.

Case (b): If $b=s-2$, then it is straightforward to check $\mathbb{U}_{s-1,a}(n)\subseteq F(n,s,q)$.  Since $S(G)=a{n\choose 2}+\lfloor \frac{s-2}{s-1}n\rfloor$ for all $G\in U_{s-1,a}(n)$, Theorem \ref{bleqs-2} implies $\mathbb{U}_{s-1,a}(n)\subseteq \calS(n,s,q)$.  Because every element in $\mathbb{U}_{s-1,a}(n)$ has all edge multiplicities in $\{a+1, a\}$, Corollary \ref{sumimpliesproduct} implies $\mathbb{U}_{s-1,a}(n)\subseteq \calP(n,s,q)$ and every $G'\in \calP(n,s,q)$ contains exactly $\lfloor \frac{s-2}{s-1}n\rfloor$ edges of multiplicity $a+1$, and all others of multiplicity $a$.  Thus $\expi(n,s,q)=a^{n\choose 2}(\frac{a+1}{a})^{\lfloor \frac{s-2}{s-1}\rfloor}$.  Suppose $s=3$, $b=1$, and $G'=([n],w)\in \calP(n,s,q)$.  If there are $x,y\neq z\in [n]$ such that $w(xy)=w(xz)=a+1$, then because $G'$ contains only edges of multiplicity $a+1$ and $a$, $S(\{x,y,z\})\geq 2(a+1)+a= 3a+2>q$, a contradiction.  Thus the edges of multiplicity $a+1$ form a matching of size $\lfloor \frac{n}{2}\rfloor$ in $G'$, so $G'\in \mathbb{U}_{s-1,a}(n)$.  This shows $\mathbb{U}_{s-1,a}(n)=\calP(n,s,q)$.
\qed

\vspace{3mm}
\noindent The following is a consequence of Theorem 5.2 of \cite{bondytuza}. 

\begin{theorem}[{\bf Bondy-Tuza \cite{bondytuza}}]\label{5.2}
Suppose $n\geq s\geq 2$, $a\geq 1$, and $q=a{s\choose 2}-1$.  Then 
$$
\exs(n,s,q)=(a-1){n\choose 2}+t_{s-1}(n).
$$
\end{theorem}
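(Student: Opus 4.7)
My plan is to prove the two bounds separately. For the lower bound, I would exhibit an extremal construction; for the upper bound, I would combine Tur\'{a}n's theorem with a weight-normalization step.

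For the lower bound, take $G_0\in\mathbb{T}_{s-1,a}(n)$: fix an equipartition $U_1,\ldots,U_{s-1}$ of $[n]$ and assign multiplicity $a-1$ to every within-part pair and multiplicity $a$ to every between-part pair. A direct count gives
\[
S(G_0)=(a-1)\Bigl({n\choose 2}-t_{s-1}(n)\Bigr)+a\,t_{s-1}(n)=(a-1){n\choose 2}+t_{s-1}(n).
\]
To verify $G_0\in F(n,s,q)$, take any $X\in{[n]\choose s}$: by pigeonhole on the $s-1$ parts, two vertices of $X$ lie in a common $U_i$, so ${X\choose 2}$ contains at least one within-part pair and $\sum_{xy\in{X\choose 2}}w(xy)\leq a{s\choose 2}-1=q$.

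For the upper bound, let $G=([n],w)\in F(n,s,q)$ and form the simple ``heavy'' graph $H$ on $[n]$ with edges $\{xy:w(xy)\geq a\}$. If $H$ contained a copy of $K_s$ on some $X\in{[n]\choose s}$, the induced $s$-set weight would be at least $a{s\choose 2}>q$, a contradiction; thus $H$ is $K_s$-free and Tur\'{a}n's theorem gives $|E(H)|\leq t_{s-1}(n)$. Splitting $S(G)$ according to membership in $E(H)$,
\[
S(G)\leq(a-1)\Bigl({n\choose 2}-|E(H)|\Bigr)+\sum_{xy\in E(H)}w(xy)=(a-1){n\choose 2}+\sum_{xy\in E(H)}\bigl(w(xy)-(a-1)\bigr).
\]
If every heavy pair had $w(xy)=a$ exactly, the residual sum would simplify to $|E(H)|\leq t_{s-1}(n)$ and the argument would be complete.

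The main obstacle is controlling ``super-heavy'' pairs with $w(xy)\geq a+1$. My plan is to show by a compression/shifting argument that among sum-extremal graphs one may assume $w(xy)\in\{a-1,a\}$ throughout. The key observation is that $w(xy)\geq a+1$ forces at least two extra units of slack on every $s$-set $X\ni x,y$: the other ${s\choose 2}-1$ pairs in ${X\choose 2}$ must sum to at most $a({s\choose 2}-1)-2$, hence some $x'y'\in{X\choose 2}$ has $w(x'y')\leq a-1$. Shifting one unit of multiplicity from $xy$ to $x'y'$ preserves $S(G)$ and preserves the $(s,q)$-constraint on every $s$-set containing $\{x,y\}$. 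The delicate step is verifying the constraint on $s$-sets containing $\{x',y'\}$ but not $\{x,y\}$; I would handle this by choosing $x'y'$ to share a vertex with $xy$, restricting the family of ``new'' $s$-sets to check, and iterating the shift until every pair has multiplicity in $\{a-1,a\}$. Once this normalization is done, the clean Tur\'{a}n bound above yields the claim. This is precisely the content of Bondy--Tuza's Theorem~5.2 applied with $b={s\choose 2}-1$, which is how I would ultimately cite the result.
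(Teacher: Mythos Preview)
The paper does not prove this statement; it is quoted from Bondy--Tuza without argument. So there is no ``paper's proof'' to compare against, and your final remark that you would cite their Theorem~5.2 is exactly what the paper does. That said, your attempted self-contained proof has a real gap in the upper bound.

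The lower bound via $\mathbb{T}_{s-1,a}(n)$ is fine, and the reduction to the claim ``we may take all multiplicities in $\{a-1,a\}$'' is the right diagnosis of where the difficulty lies. But your shifting procedure---decrease a super-heavy pair $xy$ by one and increase an adjacent light pair $x'y'$ by one---does not in general preserve membership in $F(n,s,q)$, and choosing $x'y'$ adjacent to $xy$ does not rescue it. Concretely, take $s=3$, $a=2$, $q=5$, $n=4$, and
\[
w(12)=4,\quad w(13)=1,\quad w(23)=0,\quad w(14)=0,\quad w(24)=1,\quad w(34)=4.
\]
One checks $G\in F(4,3,5)$ and $S(G)=10=(a-1)\binom{4}{2}+t_2(4)$, so $G$ is sum-extremal. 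The super-heavy pair $12$ has four adjacent light pairs $\{13,23,14,24\}$, and shifting one unit from $12$ to any of them creates a triple of weight $6>q$ (through the other super-heavy pair $34$). Thus no adjacent shift is available, and your iteration cannot proceed. In particular, the assertion that among sum-extremal graphs one may \emph{reach by local shifts} a configuration with all multiplicities in $\{a-1,a\}$ is false.

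What Bondy--Tuza actually do (and what the present paper invokes as their Lemma~5.1 in the proof of Theorem~\ref{schoose2-2}) is a global reduction
\[
\exs\Big(n,s,a{s\choose 2}-1\Big)=(a-1){n\choose 2}+\exs\Big(n,s,{s\choose 2}-1\Big),
\]
which collapses the problem to the $a=1$ case, i.e.\ to ordinary graphs with no $K_s$, where Tur\'an's theorem gives $\exs(n,s,{s\choose 2}-1)=t_{s-1}(n)$ directly. This avoids edge-by-edge shifting entirely. If you want a self-contained argument, proving that reduction lemma (roughly: an extremal $(n,s,q)$-graph with $q\ge{s\choose 2}$ can be taken to have minimum multiplicity at least $1$, then subtract $1$ everywhere) is the route to pursue, not local compression on super-heavy edges.
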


\noindent{\bf Proof of Theorem \ref{caseii}(a) (Extremal).}
Since $\mathbb{T}_{s-1,a}(n)\subseteq F(n,s,q)$ and for all $G\in \mathbb{T}_{s-1,a}(n)$, $S(G)=(a-1){n\choose 2}+t_{s-1}(n)$, Theorem \ref{5.2} implies that $\mathbb{T}_{s-1,a}(n)\subseteq \calS(n,s,q)$.  Therefore Corollary \ref{sumimpliesproduct} implies $\mathbb{T}_{s-1,a}(n)\subseteq \calP(n,s,q)$ and each $G\in \calP(n,s,q)$ has $t_{s-1}(n)$ edges of multiplicity $a$ and the rest of multiplicity $a-1$.  Fix $G=([n],w)\in \calP(n,s,q)$ and let $G'$ be the graph with vertex set $[n]$ and edge set $E=\{xy\in {[n]\choose 2}: w(xy)=a\}$.  Then $G'$ is $K_s$-free and has $t_{s-1}(n)$ edges, so by Tur\'{a}n's theorem, $G'=T_{s-1}(n)$ and thus $G\in \mathbb{T}_{s-1,a}(n)$.  So we have shown, $\calP(n,s,q)=\mathbb{T}_{s-1,a}(n)$.  Consequently, $\expi(n,s,q)=(a-1)^{n\choose 2}(\frac{a}{a-1})^{t_{s-1}(n)}$ and $\expi(s,q)=(a-1)(\frac{a}{a-1})^{\frac{s-2}{s-1}}$.
\qed

\vspace{3mm}

\noindent To prove Theorem \ref{caseii}(b) (Extremal), we will need the following theorem, as well as a few lemmas.
\begin{theorem}{\bf [Dirac \cite{Dirac}, Bondy-Tuza \cite{bondytuza}]}\label{schoose2-2}
Let $n\geq s\geq 4$, $a\geq 1$, and $q=a{s\choose 2}-t$ for some $2\leq t\leq \frac{s}{2}$.  Then $\exs(n,s,q)=\exs(n,s',q')$ where $s'=s-t+1$ and $q'=a{s'\choose 2}-1$.
\end{theorem}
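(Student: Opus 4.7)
The plan is to prove the identity by establishing the two matching inequalities $\exs(n,s,q)\geq\exs(n,s',q')$ and $\exs(n,s,q)\leq\exs(n,s',q')$ separately.  For the lower bound I would exhibit a multigraph in $F(n,s,q)$ whose sum matches $\exs(n,s',q')$.  By Theorem~\ref{5.2} applied to the pair $(s',q')$, one has $\exs(n,s',q')=(a-1)\binom{n}{2}+t_{s'-1}(n)=(a-1)\binom{n}{2}+t_{s-t}(n)$, and this is exactly the edge-sum of any $G\in\mathbb{T}_{s-t,a}(n)$ (interpreting the degenerate case $a=1$ as the Tur\'an graph $T_{s-t}(n)$ with $0/1$ weights).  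It therefore suffices to check $\mathbb{T}_{s-t,a}(n)\subseteq F(n,s,q)$.  For an arbitrary $s$-subset $X$ whose vertices split with part-sizes $k_1,\ldots,k_{s-t}$ (so $\sum_i k_i=s$), the weight-sum inside $X$ equals $a\binom{s}{2}-\sum_i\binom{k_i}{2}$.  Since $t\leq s/2$, the most balanced distribution places $t$ parts at size $2$ and $s-2t$ at size $1$, yielding $\sum_i\binom{k_i}{2}=t$; by convexity this is the minimum over all valid distributions, hence $S(X)\leq a\binom{s}{2}-t=q$.

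For the upper bound, take $G=([n],w)\in F(n,s,q)$ and aim for $S(G)\leq (a-1)\binom{n}{2}+t_{s-t}(n)$.  The plan is first to reduce to the case in which every multiplicity of $G$ lies in $\{a-1,a\}$, and then to translate the remaining statement into a classical Tur\'an-type extremal problem.  The normalization would proceed via a local-exchange argument: an edge of weight exceeding $a$ or falling below $a-1$ should be adjustable, using the slack in the $(s,q)$-constraint, without decreasing $S(G)$.  Once $G$ has all weights in $\{a-1,a\}$, set $H=\{xy:w(xy)=a\}$; the $(s,q)$-constraint then reads $|H\cap\binom{X}{2}|\leq \binom{s}{2}-t$ for every $s$-subset $X$, equivalently, $H$ contains no graph on $s$ vertices with more than $\binom{s}{2}-t$ edges.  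Among the graphs in this forbidden family, the one with the smallest chromatic number is $K_s-M_{t-1}$ (the matching $M_{t-1}$ fits in $K_s$ precisely because $2(t-1)\leq s$, which is our hypothesis $t\leq s/2$), and a direct coloring check gives $\chi(K_s-M_{t-1})=s-(t-1)=s'$.  By the classical extension of Tur\'an's theorem due to Dirac, $\ex(n,K_s-M_{t-1})=t_{s-t}(n)$ for all sufficiently large $n$, so $|H|\leq t_{s-t}(n)$ and $S(G)=(a-1)\binom{n}{2}+|H|\leq (a-1)\binom{n}{2}+t_{s-t}(n)=\exs(n,s',q')$.

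The main obstacle is the normalization step reducing to multiplicities in $\{a-1,a\}$: a naive averaging bound, obtained by summing $S(X)\leq q$ over all $s$-subsets and dividing by the per-edge multiplicity, only yields $S(G)\leq (a-t/\binom{s}{2})\binom{n}{2}$, which is asymptotically weaker than what is needed.  Consequently one must engineer a careful local-exchange (or density-increment) argument to justify that in the extremal case $\mu(G)\leq a$ and $w(xy)\geq a-1$ for all pairs.  A secondary subtlety is invoking the exact, rather than asymptotic Erd\H os--Stone--Simonovits, form of Dirac's theorem for $K_s-M_{t-1}$; this is where the restriction $s\geq 4$ is really used, since the chromatic-number arithmetic $s'=s-t+1$ must remain strictly less than $s$ for the Tur\'an graph $T_{s-t}(n)$ to be the genuine extremizer of the full forbidden family.
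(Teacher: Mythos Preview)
Your strategy aligns with the paper's: both ultimately rest on Dirac's result $\exs(n,s,{s\choose 2}-t)=t_{s-t}(n)$ for the $a=1$ case and on Theorem~\ref{5.2} for evaluating $\exs(n,s',q')$. The divergence is in how one passes from general $a$ to $a=1$. You propose normalizing an extremal $G$ so that all weights lie in $\{a-1,a\}$ via a local-exchange argument, and you correctly flag this as the main obstacle. It is a genuine gap: the exchange you sketch is not obviously sound (lowering a weight above $a$ decreases $S(G)$, while raising a weight below $a-1$ may violate an $s$-set constraint), and you supply no mechanism to overcome this. Nothing in your outline forces an extremal configuration to avoid weights outside $\{a-1,a\}$ \emph{a priori}.

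The paper bypasses this entirely: it invokes Lemma~5.1 of Bondy--Tuza, which asserts $\exs(n,s,q+{s\choose 2})=\exs(n,s,q)+{n\choose 2}$ for all $q$. Iterated $a-1$ times this yields $\exs(n,s,a{s\choose 2}-t)=(a-1){n\choose 2}+\exs(n,s,{s\choose 2}-t)$ in one line, with no structural claim about extremal multigraphs required. Your missing normalization step is thus precisely the content of that black-box lemma. A secondary issue: recasting the $a=1$ problem as $\ex(n,K_s-M_{t-1})$ and appealing to a Simonovits-type edge-critical theorem risks yielding the bound only for large $n$, whereas the statement is for all $n\geq s$; the paper cites Dirac's result in the exact form $\exs(n,s,{s\choose 2}-t)=t_{s-t}(n)$, valid in the full range. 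Your lower-bound verification that $\mathbb{T}_{s-t,a}(n)\subseteq F(n,s,q)$ via convexity of $\sum_i\binom{k_i}{2}$ is correct and matches what the paper does implicitly.
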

\begin{proof}
Let $n\geq s\geq 4$ and  $2\leq t\leq s/2$.  In \cite{Dirac}, Dirac proved that $\exs(n,s,{s\choose 2}-t)=t_{s-t}(n)$.  This along with Lemma 5.1 in \cite{bondytuza} implies that for all $a\geq 1$, 
$$
\exs(n,s,a{s\choose 2}-t)=\exs(n,s,{s\choose 2}-t)+(a-1){n\choose 2}=t_{s-t}(n)+(a-1){n\choose 2}=\exs(n,s',a{s'\choose 2}-1),
$$
where the last equality is by Theorem \ref{5.2} applied to $s'$ and $a{s'\choose 2}-1$.
\end{proof}

\begin{lemma}\label{factii}
If $s,q,a,t$ are integers satisfying case (b) of Theorem \ref{caseii}, and $s'=s-t+1$,  $q'=a{s'\choose 2}-1$, then for all $n\geq s$, $\mathbb{T}_{s'-1}(n)\subseteq \calP(n,s,q)$ and $\expi(n,s,q)=\expi(n,s',q')$.
\end{lemma}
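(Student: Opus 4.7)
The plan is to show $\mathbb{T}_{s-t,a}(n)$ (note $s'-1 = s-t$) sits inside $F(n,s,q)$, then show each of its members is sum-extremal, then invoke Corollary \ref{sumimpliesproduct} to upgrade sum-extremal to product-extremal, and finally compare the resulting value of $\expi(n,s,q)$ with the value of $\expi(n,s',q')$ given by Theorem \ref{caseii}(a).

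First I would fix $G=([n],w)\in \mathbb{T}_{s-t,a}(n)$ with equipartition $U_1,\ldots,U_{s-t}$ and verify $G\in F(n,s,q)$. For any $X\in\binom{[n]}{s}$, write $k_i=|X\cap U_i|$, so $\sum_i k_i=s$. A direct count gives
\[
S(G[X]) \;=\; (a-1)\sum_{i=1}^{s-t}\binom{k_i}{2}+a\Bigl(\binom{s}{2}-\sum_{i=1}^{s-t}\binom{k_i}{2}\Bigr) \;=\; a\binom{s}{2}-\sum_{i=1}^{s-t}\binom{k_i}{2},
\]
so $G[X]$ has weight at most $q=a\binom{s}{2}-t$ iff $\sum_i\binom{k_i}{2}\ge t$. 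Since there are only $s-t$ parts and $\sum k_i=s$, the sum $\sum_i\binom{k_i}{2}$ is minimized by a convexity argument when the $k_i$ are as equal as possible; writing $s=1\cdot(s-t)+t$, the minimum is $t\binom{2}{2}+(s-2t)\binom{1}{2}=t$ (this is where the hypothesis $t\le s/2$ is used, to guarantee $s-2t\ge 0$). Thus $G\in F(n,s,q)$.

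Next I would compute $S(G)$ globally: the number of inside–part pairs is $\binom{n}{2}-t_{s-t}(n)$, giving
\[
S(G)=(a-1)\binom{n}{2}+t_{s-t}(n).
\]
Setting $s'=s-t+1$ and $q'=a\binom{s'}{2}-1$, Theorem \ref{schoose2-2} gives $\exs(n,s,q)=\exs(n,s',q')$, and Theorem \ref{5.2} (applied to $s',q'$) gives $\exs(n,s',q')=(a-1)\binom{n}{2}+t_{s'-1}(n)=(a-1)\binom{n}{2}+t_{s-t}(n)$. So $S(G)=\exs(n,s,q)$, which means $\mathbb{T}_{s-t,a}(n)\subseteq\calS(n,s,q)$. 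Since every $G\in\mathbb{T}_{s-t,a}(n)$ has all edge multiplicities in $\{a-1,a\}$, Corollary \ref{sumimpliesproduct} immediately yields $\mathbb{T}_{s-t,a}(n)\subseteq\calP(n,s,q)$, and in particular
\[
\expi(n,s,q)=P(G)=(a-1)^{\binom{n}{2}-t_{s-t}(n)}\,a^{t_{s-t}(n)}=(a-1)^{\binom{n}{2}}\Bigl(\tfrac{a}{a-1}\Bigr)^{t_{s-t}(n)}.
\]
Finally, Theorem \ref{caseii}(a) (Extremal) applied with parameters $(s',q')$ gives exactly $\expi(n,s',q')=(a-1)^{\binom{n}{2}}(\tfrac{a}{a-1})^{t_{s'-1}(n)}$, and since $s'-1=s-t$ this matches the expression above, proving $\expi(n,s,q)=\expi(n,s',q')$.

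The only genuine content beyond bookkeeping is the convexity step showing $\sum_i\binom{k_i}{2}\ge t$ for any $s$-vertex sub-selection from an $(s-t)$-part equipartition; everything else is a clean reduction that chains together Theorems \ref{schoose2-2}, \ref{5.2}, Corollary \ref{sumimpliesproduct}, and case (a) of Theorem \ref{caseii}. I do not foresee a real obstacle, though care is needed to confirm $t\le s/2$ is precisely what makes the feasibility inequality tight rather than violated.
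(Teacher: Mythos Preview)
Your proof is correct and follows essentially the same approach as the paper: show $\mathbb{T}_{s-t,a}(n)\subseteq F(n,s,q)$, use Theorems \ref{schoose2-2} and \ref{5.2} to identify $S(G)$ with $\exs(n,s,q)$, apply Corollary \ref{sumimpliesproduct} to pass to $\calP(n,s,q)$, and then match $\expi(n,s,q)$ with $\expi(n,s',q')$ via Theorem \ref{caseii}(a). The only difference is that you spell out the convexity argument for $\sum_i\binom{k_i}{2}\ge t$, which the paper dismisses as ``straightforward to check,'' and you compare the two $\expi$ values by computing both explicitly rather than by observing $G\in\calP(n,s',q')$ directly; these are cosmetic.
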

\begin{proof}
Set $s'=s-t+1$ and $q'=a{s'\choose 2}-1$, and fix $n\geq s$.  Fix $G\in \mathbb{T}_{s'-1,a}(n)$.  It is straightforward to check that $G\in F(n,s,q)$.  By Theorem \ref{schoose2-2}, $\exs(n,s',q')=\exs(n,s,q)$.  Since $S(G)=(a-1){n\choose 2}+t_{s'-1}(n)$, by Theorem \ref{5.2} applied to $s'$ and $q'$, we have that $S(G)=\exs(n,s',q')=\exs(n,s,q)$.  This shows $G\in \calS(n,s,q)$. By Corollary \ref{sumimpliesproduct}, since $G$ has all edge multiplicities in $\{a,a-1\}$, $G\in \calP(n,s,q)$, so $P(G)=\expi(n,s,q)$.  Since $G\in \mathbb{T}_{s'-1,a}(n)$ and $\mathbb{T}_{s'-1,a}(n)\subseteq \calP(n,s',q')$ by Theorem \ref{caseii}(a) (Extremal), $P(G)=\expi(n,s',q')$.  Thus $\expi(n,s,q)=P(G)=\expi(n,s',q')$.
\end{proof}
 
We now fix some notation.  Given $n\in \mathbb{N}$, $z\in [n]$, $Y\subseteq [n]$, and $G=([n],w)$, set 
$$
S(Y)=\sum_{xy\in {Y\choose 2}} w(xy),\quad S_z(Y)=\sum_{y\in Y}w(yz), \quad P(Y)=\prod_{xy\in {Y\choose 2}} w(xy), \quad \hbox{ and }\quad P_z(Y)=\prod_{y\in Y}w(yz)
$$
If $X\subseteq [n]$ is disjoint from $Y$, set $P(X,Y)=\prod_{x\in X, y\in Y}w(xy)$. 

\begin{claim}\label{gwthcaseii}
Suppose $s,q,a,t$ are integers satisfying the hypotheses of case (b) of Theorem \ref{caseii}.  Then for all $n\geq 2s$ and $s-t+1\leq y\leq s-1$, 
\begin{align*}
\expi(n-y,s,q)\leq \expi(n,s,q)(a-1)^{-{y\choose 2}}\Big(a^{y-2}(a-1)^2\Big)^{-(n-y)}\Big(\frac{a-1}{a}\Big)^{\frac{n-y}{s-t}}.
\end{align*}
\end{claim}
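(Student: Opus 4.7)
The plan is to use Lemma~\ref{factii} to evaluate both $\expi(n,s,q)$ and $\expi(n-y,s,q)$ explicitly, reducing the claim to a lower bound on the Tur\'an number difference $t_{s-t}(n) - t_{s-t}(n-y)$.

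Since $n \geq 2s$ and $y \leq s-1$ imply $n - y \geq s + 1 \geq s$, Lemma~\ref{factii} applies to both $n$ and $n-y$. Computing the product of edge multiplicities in any element of $\mathbb{T}_{s-t,a}(m)$ gives
\[
\expi(m, s, q) \;=\; (a-1)^{\binom{m}{2} - t_{s-t}(m)}\, a^{t_{s-t}(m)} \qquad \text{for } m \in \{n, n-y\},
\]
since such a Tur\'an-like multigraph has $t_{s-t}(m)$ between-part edges of weight $a$ and $\binom{m}{2} - t_{s-t}(m)$ within-part edges of weight $a-1$. Dividing the target inequality through by $\expi(n-y,s,q)$, one writes both sides as $(a-1)^X a^Y$; a direct calculation shows they share total exponent sum $X + Y = \binom{n}{2} - \binom{n-y}{2} = \binom{y}{2} + y(n-y)$. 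Since $a/(a-1) > 1$, the claim is then equivalent to showing that the $a$-exponent on the left dominates:
\[
D \;:=\; t_{s-t}(n) - t_{s-t}(n-y) \;\geq\; (y-2)(n-y) + \tfrac{n-y}{s-t}.
\]

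To bound $D$ from below, I would telescope, using the fact that $T_{s-t}(m)$ arises from $T_{s-t}(m-1)$ by inserting a vertex into a smallest part; the new vertex contributes $m - 1 - \lfloor (m-1)/(s-t) \rfloor$ between-part edges. Summing this over $m \in \{n-y+1, \ldots, n\}$ and using $\lfloor x \rfloor \leq x$ gives
\[
D \;\geq\; y(n-y) + \binom{y}{2} - \frac{y(n-y) + \binom{y}{2}}{s-t} \;=\; \frac{(s-t-1)\bigl(y(n-y) + \binom{y}{2}\bigr)}{s-t}.
\]
Substituting into the required inequality and clearing denominators reduces everything to
\[
(n-y)\bigl(2(s-t) - y - 1\bigr) + (s-t-1)\binom{y}{2} \;\geq\; 0.
\]
Under the hypotheses of case (b), $y + 1 \leq s \leq 2(s-t)$ (using $t \leq s/2$), so the first summand is nonnegative, and $s - t \geq 2$ together with $\binom{y}{2} \geq \binom{s-t+1}{2} > 0$ makes the second summand strictly positive.

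The main obstacle is the bookkeeping in the telescoping step. The entire argument ultimately rests on the slack afforded by the inequality $y + 1 \leq 2(s-t)$, which is precisely guaranteed by the hypothesis $t \leq s/2$; at the boundary case $t = s/2$ and $y = s-1$ the inequality becomes an equality, so the parameter range in the statement is sharp for this method.
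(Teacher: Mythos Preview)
Your argument is correct. The reduction to the Tur\'an-number inequality $D\ge (y-2)(n-y)+\tfrac{n-y}{s-t}$ is accurate, the telescoping bound $t_r(m)-t_r(m-1)=m-1-\lfloor(m-1)/r\rfloor$ with $\lfloor x\rfloor\le x$ gives exactly the lower bound you state, and the final verification $(n-y)(2(s-t)-y-1)+(s-t-1)\binom{y}{2}\ge 0$ is valid under the hypotheses (with equality possible only in the first summand at the extreme $y=s-1$, $t=s/2$, as you note).

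The paper argues differently. Rather than invoking the closed formula for $\expi(m,s,q)$ and reducing to a Tur\'an-number inequality, it works constructively: it fixes $H\in\mathbb{T}_{s-t,a}(n-y)$, extends it to $H'\in\mathbb{T}_{s-t,a}(n)$ by distributing the $y$ new vertices among the parts, and then bounds $P(H')/P(H)=P(Y')P(Y',[n-y])$ directly. The key step there is to arrange that one part $U_1$ with $|U_1|\ge (n-y)/(s-t)$ receives only one new vertex, so each $z\in U_1$ contributes $P_z(Y')=a^{y-1}(a-1)$ rather than the worst case $a^{y-2}(a-1)^2$; this yields the factor $(a/(a-1))^{(n-y)/(s-t)}$. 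Your approach is more algebraic and arguably cleaner---it avoids the explicit redistribution of $Y'$ among parts and the somewhat delicate claim that one can simultaneously achieve $|U_1|\ge (n-y)/(s-t)$ and $|U_1'\setminus U_1|=1$---at the cost of relying on the explicit formula from Lemma~\ref{factii} and Theorem~\ref{caseii}(a). The paper's approach has the minor advantage of being self-contained at the level of multigraph products, without passing through Tur\'an numbers.
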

\begin{proof}
Set $s'=s-t+1$ and $q'=a{s'\choose 2}-1$. Fix $n\geq s$ and $s'\leq y\leq s-1$.  Choose some $H=([n-y],w)\in \mathbb{T}_{s'-1,a}(n-y)$ and let $U_1,\ldots, U_{s'-1}$ be the partition of $[n-y]$ corresponding to $H$.  Observe that there is some $i$ such that $|U_i|\geq \frac{n-y}{s'-1}$.  Without loss of generality, assume $|U_1|\geq \frac{n-y}{s'-1}$.  Assign the elements of $Y':=[n]\setminus [n-y]$ to the $U_i$ in as even a way as possible, to obtain an equipartition $U_1',\ldots, U_{s'-1}'$ of $[n]$ extending $U_1,\ldots, U_{s'-1}$.  Observe that because $s'\leq |Y'|\leq s-1$ and $s'-1=s-t\geq s/2$, for each $i$, $|U_i'\setminus U_i|\in \{1,2\}$, and there is at least one $i$ such that $|U_i'\setminus U_i|=1$.  Since $|U_1|\geq \frac{n-y}{s-t}$, by redistributing $Y'$ if necessary, we may assume that $|U_1'\setminus U_1|=1$.  Define a new multigraph $H'=([n],w')$ so that $w'(xy)=a-1$ if $xy\in {U'_i\choose 2}$ for some $i\in [s'-1]$ and $w'(xy)=a$ if $(x,y)\in U'_i\times U'_j$ for some $i\neq j$.  Note that by construction $H'\in \mathbb{T}_{s'-1,a}(n)$ and $H'[[n-y]]=H$.   By Lemma \ref{factii}, since $n-y\geq s$, $H\in \mathbb{T}_{s'-1,a}(n-y)$ and $H'\in \mathbb{T}_{s'-1,a}(n)$ imply $H\in \calP(n-y,s,q)$ and $H'\in \calP(n,s,q)$.  These facts imply the following.
\begin{align}\label{n}
\expi(n,s,q)=P(H')= P(H)P(Y')P(Y',[n-y])&= \expi(n-y,s,q)P(Y')P(Y',[n-y]).
\end{align}

By definition of $H'$, if $|U'_i\setminus U_i|=2$, then for all $z\in U_i$, $P_z(Y')=a^{y-2}(a-1)^2$ and if $|U_i'\setminus U_i|=1$, then for all $z\in U_i$, $P_z(Y')=a^{y-1}(a-1)$. Since $|U'_1\setminus U_1|=1$, this implies
\begin{align}\label{m}
P(Y',[n-y])\geq \Big(a^{y-2}(a-1)^2\Big)^{n-y-|U_{1}|}\Big(a^{y-1}(a-1)\Big)^{|U_1|}=\Big(a^{y-2}(a-1)^2\Big)^{n-y}\Big(\frac{a}{a-1}\Big)^{|U_1|}.
\end{align}
By construction, $P(Y')\geq (a-1)^{y\choose 2}$. Combining this with (\ref{n}), (\ref{m}), and the fact that $|U_1|\geq \frac{n-y}{s-t}$, we obtain
$$
\expi(n,s,q)\geq \expi(n-y,s,q)(a-1)^{y\choose 2}\Big(a^{y-2}(a-1)^2\Big)^{n-y}\Big(\frac{a}{a-1}\Big)^{\frac{n-y}{s-t}}.
$$
Rearranging this yields $\expi(n-y,s,q)\leq \expi(n,s,q)(a-1)^{-{y\choose 2}}(a^{y-2}(a-1)^2)^{-(n-y)}(\frac{a-1}{a})^{ \frac{n-y}{s-t}}$.  
\end{proof}

\begin{lemma}\label{stabilityiiilemma1}
Let  $n\geq s\geq 4$, $a\geq 2$, and $q=a{s\choose 2}-t$ for some $2\leq t\leq \frac{s}{2}$. Suppose $G\in F(n,s,q)$ and $Y\in {[n]\choose s-t+1}$ satisfies $S(Y)\geq a{s-t+1\choose 2}$.  Then there is $Y\subseteq Y'\subseteq [n]$ such that $s-t+1\leq |Y'|\leq s-1$ and for all $z\in [n]\setminus Y'$, $S_z(Y')\leq a|Y'|-2$, and consequently, $P_z(Y')\leq a^{|Y'|-2}(a-1)^2$.
\end{lemma}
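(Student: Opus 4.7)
The plan is to build $Y'$ by greedily extending $Y$ one vertex at a time, at each step adding a vertex that witnesses a failure of the desired inequality.  Set $Y_0 = Y$.  Inductively, given $Y_i$: if every $z \in [n]\setminus Y_i$ already satisfies $S_z(Y_i) \leq a|Y_i|-2$, halt and put $Y' := Y_i$.  Otherwise, choose some $z_{i+1} \notin Y_i$ with $S_{z_{i+1}}(Y_i) \geq a|Y_i|-1$ and set $Y_{i+1} = Y_i \cup \{z_{i+1}\}$.  Continue until either the halting condition triggers, or $|Y_i| = s-1$, in which case we likewise set $Y' := Y_i$.  By construction $|Y| \leq |Y'| \leq s-1$.

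The key bookkeeping is the lower bound
\[
S(Y_i) \;\geq\; a\binom{|Y_i|}{2} - i,
\]
which I would prove by induction on $i$.  The base case is the hypothesis $S(Y) \geq a\binom{s-t+1}{2}$, and the inductive step uses $S(Y_{i+1}) = S(Y_i) + S_{z_{i+1}}(Y_i) \geq S(Y_i) + a|Y_i|-1$ together with the identity $a\binom{|Y_i|}{2} + a|Y_i| = a\binom{|Y_i|+1}{2}$.  If the process halts before reaching size $s-1$, then $Y'$ satisfies the conclusion by construction, so it only remains to handle the case $|Y'| = s-1$, which corresponds to $i = t-2$ greedy extensions and therefore gives $S(Y') \geq a\binom{s-1}{2} - (t-2)$.

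In that case, suppose for contradiction some $z \in [n] \setminus Y'$ has $S_z(Y') \geq a(s-1)-1$.  Then $Y' \cup \{z\} \in \binom{[n]}{s}$ and
\[
S(Y' \cup \{z\}) \;\geq\; a\binom{s-1}{2} - (t-2) + a(s-1) - 1 \;=\; a\binom{s}{2} - t + 1 \;>\; q,
\]
contradicting $G \in F(n,s,q)$.  Hence every $z \notin Y'$ satisfies $S_z(Y') \leq a|Y'|-2$, as required.  The product bound $P_z(Y') \leq a^{|Y'|-2}(a-1)^2$ then follows from Lemma \ref{arithgeom} applied with $\ell = |Y'|$ and $k = 2$, after noting that if some $w(yz)$ with $y \in Y'$ vanishes then $P_z(Y') = 0$ and the bound is trivial.

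The main obstacle is the delicate arithmetic rather than any conceptual difficulty: each greedy step costs exactly $1$ in the lower bound on $S(Y_i)$, and we can afford at most $t-2$ such steps before the forbidden threshold $q = a\binom{s}{2}-t$ would be breached by the size-$s$ set $Y' \cup \{z\}$.  This matches precisely the allowed range $|Y'| \leq s-1$ in the statement, so the argument is sharp and would break down if one tried to extend $Y'$ further.
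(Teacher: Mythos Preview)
Your proof is correct and follows essentially the same greedy extension argument as the paper's own proof. The paper phrases it as a global proof by contradiction (assume no suitable $Y'$ exists, then build $z_1,\ldots,z_{t-1}$ to reach a set of size $s$ with sum exceeding $q$), whereas you phrase it constructively and treat the terminal case $|Y'|=s-1$ separately; the underlying arithmetic and the invocation of Lemma~\ref{arithgeom} are identical.
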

\begin{proof}
Suppose towards a contradiction that $Y\in {[n]\choose s-t+1}$ satisfies $S(Y)\geq a{s-t+1\choose 2}$ but for all $Y\subseteq Y'\subseteq [n]$ such that $s-t+1\leq |Y'|\leq s-1$, there is $z\in [n]\setminus Y'$ with $S_z(Y')> a|Y'|-2$.  Apply this fact with $Y'=Y$ to choose $z_1\in [n]\setminus Y$ such that $S_{z_1}(Y)>a|Y|-2$.  Then inductively define a sequence $z_2,\ldots, z_{t-1}$ so that for each $1\leq i\leq t-2$, $S_{z_{i+1}}(Y\cup \{z_1,\ldots, z_i\})\geq a(s-t+1+i)-1$ (to define $z_{i+1}$, apply the fact with $Y'=Y\cup \{z_1,\ldots, z_i\}$).  Then $|Y\cup\{z_1,\ldots, z_{t-1}\}|=s$ and 
\begin{align*}
S(Y\cup \{z_1,\ldots, z_{t-1}\})&\geq S(Y)+S_{z_1}(Y)+S_{z_2}(Y\cup \{z_1\})+\ldots + S_{z_{t-1}}(Y\cup \{z_1,\ldots, z_{t-2}\})\\
&\geq a{s-t+1\choose 2}+a(s-t+1)-1+\ldots +a(s-1)-1\\
&=a{s\choose 2}-(t-1)>a{s\choose 2}-t,
\end{align*}
contradicting that $G\in F(n,s,q)$.  Therefore there is $Y\subseteq Y'\subseteq [n]$ such that $s-t+1\leq |Y'|\leq s-1$ and for all $z\in [n]\setminus Y'$,  $S_z(Y')\leq a|Y'|-2$.  By Lemma \ref{arithgeom}, this implies $P_z(Y')\leq a^{|Y'|-2}(a-1)^2$.
\end{proof}

\begin{lemma}\label{newlem}
Suppose $s,q,a,t$ are integers satisfying the hypotheses of case (b) of Theorem \ref{caseii}.  Then there are constants $C>1$ and $0<\alpha<1$ such that for all $n\geq 1$ the following holds.  Suppose $G\in F(n,s,q)$ and $k(G)$ is the maximal number of pairwise disjoint elements of $\{Y\in {[n]\choose s-t+1}: S(G[Y])\geq a{s-t+1\choose 2}\}$.  Then 
\begin{align}\label{caseiilem}
P(G)\leq C^{k(G)}\alpha^{k(G)n}\expi(n,s,q).
\end{align}
\end{lemma}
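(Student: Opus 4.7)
The approach is iterative: repeatedly remove a bad set together with its Lemma \ref{stabilityiiilemma1} extension, and telescope the one-step product bounds via Claim \ref{gwthcaseii} back to $\expi(n,s,q)$.

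Starting from $G_0 = G$ and $n_0 = n$, at each step $i \ge 1$ with $k(G_{i-1}) \ge 1$ I would choose a maximum disjoint family of bad sets in $G_{i-1}$ and a member $Y_i$ of it, then apply Lemma \ref{stabilityiiilemma1} inside $G_{i-1}$ to produce an extension $Y_i'$ with $y_i := |Y_i'| \in [s-t+1,\, s-1]$ and $P_z(Y_i') \le a^{y_i-2}(a-1)^2$ for every $z \in [n_{i-1}] \setminus Y_i'$. Set $G_i := G_{i-1}[[n_{i-1}] \setminus Y_i']$ and $n_i := n_{i-1} - y_i$. Two short counting arguments give $k(G_{i-1}) - (t-1) \le k(G_i) \le k(G_{i-1}) - 1$: the upper bound because any maximum disjoint family in $G_i$ together with $\{Y_i\}$ is disjoint in $G_{i-1}$; the lower bound because the remaining $k(G_{i-1}) - 1$ sets of the max family are disjoint from $Y_i$ and can meet $Y_i'$ only through $Y_i' \setminus Y_i$, a set of size at most $t-2$. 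The one-step estimate
\[
P(G_{i-1}) \le q^{\binom{y_i}{2}} \bigl(a^{y_i-2}(a-1)^2\bigr)^{n_i} P(G_i)
\]
telescopes; combining with $P(G_j) \le \expi(n_j,s,q)$ and $j$ iterated applications of Claim \ref{gwthcaseii} to pass from $\expi(n_j,s,q)$ to $\expi(n,s,q)$, the awkward $(a^{y_i-2}(a-1)^2)^{n_i}$ factors cancel and one is left with
\[
P(G) \le \expi(n,s,q) \cdot L^j \cdot \beta^{\sum_{i=1}^j n_i}, \qquad L := \Bigl(\tfrac{q}{a-1}\Bigr)^{\binom{s-1}{2}},\ \beta := \Bigl(\tfrac{a-1}{a}\Bigr)^{1/(s-t)} \in (0,1).
\]

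The key quantitative step is to take $j := \min(j_{\max}, \lfloor n/(2(s-1))\rfloor)$, where $j_{\max}$ is the first index with $k(G_{j_{\max}}) = 0$. This cap keeps $n_i \ge n/2$ for all $i \le j$ (so $n_i \ge 2s$ whenever $n \ge 4s$, which is the hypothesis needed for Claim \ref{gwthcaseii}). Two regimes then arise. If $j_{\max} \le n/(2(s-1))$, then $j = j_{\max} \ge k/(t-1)$ (from the iterated lower bound $k(G_i) \ge k - i(t-1)$), hence $\sum_{i=1}^j n_i \ge jn/2 \ge kn/(2(t-1))$. Otherwise $j = \lfloor n/(2(s-1))\rfloor$, so $\sum n_i \ge jn/2$ is of order $n^2/(s-1)$; combined with the trivial bound $k \le n/(s-t+1)$ this still yields $\sum n_i \ge c_0 \cdot kn$ for a positive constant $c_0$ depending only on $s,t$. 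Setting $c := \min\bigl(1/(2(t-1)),\, (s-t+1)/(4(s-1))\bigr) > 0$, $\alpha := \beta^c \in (0,1)$, and $C := L > 1$, and using $j \le k$ so that $L^j \le C^k$, one obtains $P(G) \le C^k \alpha^{kn} \expi(n,s,q)$ for all $n$ above some threshold $n_0 = n_0(s,q,a,t)$; the finitely many smaller $n$ are absorbed by enlarging $C$ if necessary, using the trivial bound $P(G) \le q^{\binom{n}{2}}$.

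The main obstacle is the simultaneous handling of the two regimes: when $k$ is moderate the iteration runs all the way to $k = 0$ and each step contributes a factor of roughly $\beta^n$, but when $k$ grows like $\Theta(n)$ the iteration must be capped at about $n/(2(s-1))$ steps, forcing the cruder bound $k \le n/(s-t+1)$ to enter the estimate. Reconciling both regimes into a single pair $(C, \alpha)$ requires the exponent bookkeeping above, but the per-step work is entirely carried by Lemma \ref{stabilityiiilemma1} and Claim \ref{gwthcaseii}.
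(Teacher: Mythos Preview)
Your argument is correct and rests on the same two ingredients the paper uses, Lemma \ref{stabilityiiilemma1} and Claim \ref{gwthcaseii}, together with the same one-step product/growth comparison. The execution, however, differs. The paper argues by induction on $n$: for $n>s^3$ it removes a single $Y'$, applies the inductive hypothesis to $H=G[[n]\setminus Y']$, plugs in Claim \ref{gwthcaseii}, and verifies one arithmetic inequality, namely $k(H)(n-y)+2t(n-y)\ge (k(H)+t-1)n$, which holds once $n\ge s^3$. With $\alpha=((a-1)/a)^{1/(2t(s-t))}$ this inequality absorbs both the drop in $k$ (by at most $t-1$) and the drop in $n$ in one stroke, so no regime split is needed; the base case $n\le s^3$ is handled by enlarging $C$.

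Your iterative version trades that single inequality for a cap $j=\min(j_{\max},\lfloor n/(2(s-1))\rfloor)$ and a dichotomy: either the process terminates early, giving $j\ge k/(t-1)$ steps each contributing $\beta^{n/2}$, or it is capped and you invoke the trivial bound $k\le n/(s-t+1)$ to convert the accumulated $\beta^{\Theta(n^2)}$ into $\beta^{\Theta(kn)}$. Both routes work; the inductive one is shorter because the inequality $k(H)(n-y)+2t(n-y)\ge (k(H)+t-1)n$ hides exactly the bookkeeping you spell out in your two cases. Your approach has the minor advantage of making the telescoping of Claim \ref{gwthcaseii} completely explicit, whereas the paper's induction packages it implicitly.
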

\begin{proof}
Set $\alpha=(\frac{a-1}{a})^{\frac{1}{2t(s-t)}}$.  Choose $C\geq q^{s-1\choose 2}$ sufficiently large so that $\expi(n,s,q)\leq C\alpha^{n^2}$ holds for all $1\leq n\leq s^3$.  We proceed by induction on $n$.  If $1\leq n\leq s^3$ and $G\in F(n,s,q)$, then (\ref{caseiilem}) is clearly true of $k(G)=0$.  If $k(G)\geq 1$, then by choice of $C$ and since $k(G)\leq n$ and $\alpha<1$,
$$
P(G)\leq \expi(n,s,q)\leq C\alpha^{n^2}\leq C\alpha^{k(G)n}\leq C^{k(G)}\alpha^{k(G)n}\expi(n,s,q).
$$
Now let $n>s^3$ and suppose by induction (\ref{caseiilem}) holds for all $G'\in F(n',s,q)$ where $1\leq n'<n$.  If $G\in F(n,s,q)$, then (\ref{caseiilem}) is clearly true if $k(G)=0$.  If $k(G)>0$, let $Y_1,\ldots, Y_k$ be a maximal set of pairwise disjoint elements in $\{Y\in {[n]\choose s-t+1}: S(G[Y])\geq a{s-t+1\choose 2}\}$.  Apply Lemma \ref{stabilityiiilemma1} to find $Y'$ such that $Y_1\subseteq Y'\subseteq [n]$, $s-t+1\leq |Y'|\leq s-1$, and for all $z\in [n]\setminus Y'$, $P_z(Y')\leq a^{|Y'|-2}(a-1)^2$.   Let $|Y'|=y$.  Then note
\begin{align}\label{ppp}
P(Y',[n]\setminus Y')=\prod_{z\in [n]\setminus Y'}P_z(Y')\leq \Big(a^{y-2}(a-1)^2\Big)^{n-y}.
\end{align}
Observe that $G[[n]\setminus Y']$ is isomorphic to some $H\in F(n-y,s,q)$.  Since $Y'$ can intersect at most $t-2$ other $Y_i$, and since $Y_1,\ldots, Y_k$ was maximal, we must have $k(H)+1\leq k(G)\leq k(H)+t-1$.  By our induction hypothesis, 
\begin{align}\label{p}
P([n]\setminus Y')=P(H)\leq C^{k(H)}\alpha^{k(H)(n-y)}\expi(n-y,s,q).
\end{align}
Since $\mu(G)\leq q$ and $y\leq s-1$, and by our choice of $C$, $P(Y')\leq q^{y\choose 2}\leq C$.  Combining this with (\ref{ppp}), (\ref{p}) and the fact that $\mu(H)\leq \mu(G)$ we obtain that 
\begin{align*}
P(G)=P([n]\setminus Y')P(Y',[n]\setminus Y')P(Y')&\leq  C^{k(H)}\alpha^{k(H)(n-y)}\expi(n-y,s,q)\Big(a^{y-2}(a-1)^2\Big)^{n-y}C\\
&= C^{k(H)+1}\alpha^{k(H)(n-y)}\expi(n-y,s,q)\Big(a^{y-2}(a-1)^2\Big)^{n-y}.
\end{align*}
Plugging in the upper bound for $\expi(n-y,s,q)$ from Claim \ref{gwthcaseii} yields that $P(G)$ is at most
\begin{align}
C^{k(H)+1}\alpha^{k(H)(n-y)}\expi(n,s,q)(a-1)^{-{y\choose 2}}\Big(\frac{a-1}{a}\Big)^{\frac{n-y}{s-t}} \leq C^{k(H)+1}\alpha^{k(H)(n-y)+2t(n-y)}\expi(n,s,q),\label{l}
\end{align}
where the last inequality is because $(a-1)^{-{y\choose 2}}<1$ and by definition of $\alpha$, $(\frac{a-1}{a})^{1/(s-t)}= \alpha^{2t}$.  We claim that the following holds.
\begin{align}\label{k}
k(H)(n-y)+2t(n-y)\geq (k(H)+t-1)n.
\end{align}
Rearranging this, we see (\ref{k}) is equivalent to $yk(H)\leq tn+n-2ty$.  Since $2\leq t\leq s/2$ and $y\leq s-1$, $tn+n-2ty\geq 3n-s(s-1)$, so it suffices to show $yk(H)\leq 3n-s(s-1)$.  By definition, $k(H)\leq \frac{n-y}{s-t+1}$ so $yk(H)\leq \frac{y(n-y)}{s-t+1}$.  Combining this with the facts that $s-t+1\leq y\leq s-1$ and $s/2< s-t+1$ yields 
\begin{align*}
yk(H)\leq \frac{(s-1)(n-(s-t+1))}{s-t+1}=n\Big(\frac{s-1}{s-t+1}\Big)-s+1< 2n\Big(\frac{s-1}{s}\Big)-s+1.
\end{align*}
Thus it suffices to check $2n(\frac{s-1}{s})-s+1\leq 3n-s(s-1)$.  This is equivalent to $(s-1)^2\leq n(\frac{s+2}{s})$, which holds because $n\geq s^3$.  This finishes the verification of (\ref{k}). Combining (\ref{l}), (\ref{k}), and the fact that $k(H)+1\leq k(G)\leq k(H)+t-1$ yields
\begin{align*}
P(G)\leq  C^{k(H)+1}\alpha^{(k(H)+t-1)n}\expi(n,s,q)\leq C^{k(G)}\alpha^{k(G)n}\expi(n,s,q).
\end{align*}
\end{proof}

\noindent{\bf Proof of Theorem \ref{caseii}(b) (Extremal).}
Set $s'=s-t+1$ and $q'=a{s'\choose 2}-1$.  Fix $n\geq s$.  By Lemma \ref{factii} and definition of $s'$, $\mathbb{T}_{s-t,a}(n)= \mathbb{T}_{s'-1,a}(n)\subseteq\calP(n,s,q)$ and 
$$
\expi(n,s,q)=\expi(n,s',q')=(a-1)^{n\choose 2}(\frac{a}{a-1})^{t_{s'-1}(n)},
$$
where the last equality is by Theorem \ref{caseii}(a) (Extremal) applied to $s'$ and $q'$.  By definition, we have $\expi(s,q)=(a-1)(\frac{a}{a-1})^{1-\frac{1}{s'-1}}$.  We have left to show that $\calP(n,s,q)\subseteq \mathbb{T}_{s'-1,a}(n)$ holds for large $n$.  Assume $n$ is sufficiently large and $C$ and $\alpha$ are as in Lemma \ref{newlem}.  Note $\expi(n,s,q)=\expi(n,s',q')$ implies $\calP(n,s,q)\cap F(n,s',q')\subseteq \calP(n,s',q')=\mathbb{T}_{s'-1,a}(n)$, where the equality is by Theorem \ref{caseii} (a) (Extremal).  So it suffices to show $\calP(n,s,q)\subseteq F(n,s',q')$.  Suppose towards a contradiction that there exists $G=([n],w)\in \calP(n,s,q)\setminus F(n,s',q')$.  Then in the notation of Lemma \ref{newlem}, $k(G)\geq 1$.  Combining this with Lemma \ref{newlem}, we have
$$
P(G)\leq C^{k(G)}\alpha^{k(G)n}\expi(n,s,q)=\Big(C\alpha^n\Big)^{k(G)}\expi(n,s,q)<\expi(n,s,q),
$$
where the last inequality is because $n$ is large, $\alpha<1$, and $k(G)\geq 1$.  But now $P(G)<\expi(n,s,q)$ contradicts that $G\in \calP(n,s,q)$. 
\qed

\section{Stability}\label{stabilitycasei-iii}

In this section we prove the product-stability results for Theorems~\ref{casei} and \ref{caseii}(a).  We will use the fact that for any $(s,q)$-graph $G$, $\mu(G)\leq q$.  If $G=(V,w)$ and $a\in \mathbb{N}$, let $E_a(G)=\{xy\in {V\choose 2}: w(xy)=a\}$ and $e_a(G)=|E_a(G)|$.  In the following notation, $p$ stands for ``plus'' and $m$ stands for ``minus.''
$$
p_a(G)=|\{xy\in {V\choose 2}: w(xy)>a\}|\qquad \hbox{ and } \qquad m_a(G)=|\{xy\in {V\choose 2}: w(xy)<a\}|.
$$

\begin{lemma}\label{generallem}
Let $s\geq 2$, $q\geq {s\choose 2}$ and $a>0$.  Suppose there exist $0<\alpha<1$ and $C>1$ such that for all $n\geq s$, every $G\in F(n,s,q)$ satisfies 
$$
P(G)\leq \expi(n,s,q)q^{Cn}\alpha^{p_a(G)}.
$$
Then for all $\delta>0$ there are $\epsilon, M>0$ such that for all $n>M$ the following holds.  If $G\in F(n,s,q)$ and $P(G)\geq \expi(n,s,q)^{1-\epsilon}$ then $p_a(G)\leq \delta n^2$.
\end{lemma}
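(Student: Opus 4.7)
The plan is to prove the contrapositive by a direct logarithmic calculation. Assume $G \in F(n,s,q)$ satisfies $p_a(G) > \delta n^2$. Combining the hypothesis $P(G) \le \expi(n,s,q)\,q^{Cn}\alpha^{p_a(G)}$ with the assumed lower bound $P(G) \ge \expi(n,s,q)^{1-\epsilon}$ and taking logarithms yields
\[
-\epsilon \log \expi(n,s,q) \;\le\; Cn\log q \;+\; p_a(G)\log \alpha,
\]
which, since $\log(1/\alpha) > 0$, rearranges to
\[
p_a(G)\log(1/\alpha) \;\le\; \epsilon \log \expi(n,s,q) + Cn\log q.
\]

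Next I would apply the crude bound $\expi(n,s,q) \le q^{\binom{n}{2}}$, which gives $\log \expi(n,s,q) \le \tfrac{n^2}{2}\log q$. For $n \ge 2C/\epsilon$, the linear term $Cn\log q$ is dominated by $\tfrac{\epsilon n^2}{2}\log q$, so we obtain
\[
p_a(G) \;\le\; \frac{\epsilon \log q}{\log(1/\alpha)}\, n^2.
\]
Choosing $\epsilon = \delta \log(1/\alpha)/\log q$ and $M = 2C/\epsilon$ forces $p_a(G) \le \delta n^2$, contradicting the assumption. Since both $\log q$ and $\log(1/\alpha)$ are positive constants (using $q \ge \binom{s}{2} \ge 1$, and $0 < \alpha < 1$), this choice of $\epsilon$ is positive and depends only on $\delta$, $q$, and $\alpha$.

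I do not anticipate any real obstacle: the argument is a two-line comparison of logarithms after absorbing the lower-order term $q^{Cn}$ into an $\epsilon n^2$ contribution. The only point to be slightly careful about is that the hypothesis is stated for \emph{every} $n \ge s$ and every $G \in F(n,s,q)$, so no structural information about $G$ beyond the quantity $p_a(G)$ is needed. The choices of $\epsilon$ and $M$ depend only on $\delta$, $s$, $q$, $a$, $C$, and $\alpha$, as required.
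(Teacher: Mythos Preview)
Your argument is correct and essentially identical to the paper's: combine the hypothesis with $P(G)\ge \expi(n,s,q)^{1-\epsilon}$, take logarithms, bound $\expi(n,s,q)\le q^{n^2}$, absorb the linear term $Cn\log q$ into $\epsilon n^2\log q$ for large $n$, and solve for $\epsilon$. The only cosmetic differences are that the paper works directly rather than by contradiction and carries a harmless extra factor of $2$ in its choice of $\epsilon$; your framing as a ``contrapositive'' is slightly off (you in fact prove the direct implication and then wrap it in an unnecessary contradiction), but the mathematics is the same.
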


\begin{proof}
Fix $\delta>0$. Choose $\epsilon>0$ so that $\frac{2\epsilon \log q}{\log(1/\alpha)}= \delta$.  Choose $M\geq s$ sufficiently large so that $n\geq M$ implies $(\epsilon n^2+Cn)\log q\leq 2\epsilon \log q n^2$.  Let $n>M$ and $G\in F(n,s,q)$ be such that $P(G)\geq \expi(n,s,q)^{1-\epsilon}$.  Our assumptions imply 
$$
\expi(n,s,q)^{1-\epsilon} \leq P(G)\leq \expi(n,s,q)q^{Cn}\alpha^{p_a(G)}.
$$
Rearranging $\expi(n,s,q)^{1-\epsilon}\leq \expi(n,s,q)q^{Cn}\alpha^{p_a(G)}$ yields $\Big(\frac{1}{\alpha}\Big)^{p_a(G)}\leq \expi(n,s,q)^{\epsilon}q^{Cn}\leq q^{\epsilon n^2+Cn}$, where the second inequality is because $\expi(n,s,q)\leq q^{n^2}$.  Taking logs of both sides, we obtain
$$
p_a(G)\log (1/\alpha) \leq (\epsilon n^2+Cn)\log q\leq 2\epsilon n^2 \log q,
$$
where the second inequality is by assumption on $n$.  Dividing both sides by $\log (1/\alpha)$ and applying the definition of $\epsilon$ yields $p_a(G)\leq \frac{2\epsilon n^2\log q}{\log(1/\alpha)}= \delta n^2$.
\end{proof}

\noindent We now prove the key lemma for this section.
\begin{lemma}\label{uniformstability}
Let $s,q,b,a$ be integers satisfying $s\geq 2$ and either 
\begin{enumerate}[(i)]
\item $a\geq 1$, $0\leq b\leq s-2$, and $q=a{s\choose 2}+b$ or 
\item $a\geq 2$, $b=0$, and $q=a{s\choose 2}-1$.
\end{enumerate}
Then there exist $0<\alpha<1$ and $C>1$ such that for all $n\geq s$ and all $G\in F(n,s,q)$, 
\begin{eqnarray}\label{*}
P(G)\leq \expi(n,s,q)q^{Cn}\alpha^{p_a(G)}.
\end{eqnarray}
\end{lemma}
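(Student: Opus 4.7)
The plan is to bound $\log P(G)$ pointwise via the deviations of the edge multiplicities from $a$, and then to convert those deviations into a factor $\alpha^{p_a(G)}$ using the sum constraint $S(G)\leq \exs(n,s,q)$ supplied by Theorem~\ref{bleqs-2} in case (i) and Theorem~\ref{5.2} in case (ii). Without loss of generality every edge of $G$ has multiplicity at least $1$ (otherwise $P(G)=0$ and the bound is trivial). I would define
$$
X \;=\; \sum_{xy:\,w(xy)>a}\bigl(w(xy)-a\bigr), \qquad Y \;=\; \sum_{xy:\,w(xy)<a}\bigl(a-w(xy)\bigr),
$$
so that trivially $X\geq p_a(G)$, and set $\alpha_1=\log((a+1)/a)$ and, when $a\geq 2$, $\alpha_2=\log(a/(a-1))$; a direct check gives $\alpha_2>\alpha_1>0$ since $a^2>a^2-1$.

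Next I would prove two elementary pointwise inequalities valid for every integer $w\geq 1$: $\log(w/a)\leq (w-a)\alpha_1$ if $w\geq a$, and $\log(a/w)\geq (a-w)\alpha_2$ if $w\leq a$ (the latter assuming $a\geq 2$). Both follow by easy induction on $|w-a|$; for example, the first reduces to $(a+k)a^{k-1}\leq (a+1)^k$, which is immediate from the binomial theorem. Summing these over $xy\in {[n]\choose 2}$ yields the master bound
$$
\log P(G)\;\leq\;{n\choose 2}\log a \;+\; \alpha_1 X \;-\; \alpha_2 Y.
$$

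Now I invoke the sum constraint in each case. In case (i) with $a\geq 2$, Theorem~\ref{bleqs-2} gives $X-Y\leq \lfloor bn/(b+1)\rfloor\leq n$, so $Y\geq X-n$; combined with $\expi(n,s,q)\geq a^{n\choose 2}$ (from $\mathbb{U}_a(n)\in F(n,s,q)$), the master bound yields $\log P(G)\leq \log\expi(n,s,q)+\alpha_2 n-(\alpha_2-\alpha_1)p_a(G)$. Taking $\alpha=e^{-(\alpha_2-\alpha_1)}\in(0,1)$ and $C$ large enough that $q^C\geq e^{\alpha_2}$ then yields (\ref{*}). In case (ii), Theorem~\ref{5.2} gives $S(G)\leq (a-1){n\choose 2}+t_{s-1}(n)$, hence $Y\geq X+{n\choose 2}-t_{s-1}(n)$, and the master bound becomes
$$
\log P(G)\;\leq\;{n\choose 2}\log a \;-\;\alpha_2\Bigl({n\choose 2}-t_{s-1}(n)\Bigr)\;-\;(\alpha_2-\alpha_1)p_a(G)\;=\;\log\expi(n,s,q)-(\alpha_2-\alpha_1)p_a(G),
$$
where the last equality is the formula for $\expi(n,s,q)$ from Theorem~\ref{caseii}(a) (Extremal). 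This is even stronger than required.

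The only remaining case is (i) with $a=1$, where $\alpha_2$ is undefined. Here every edge has $w\geq 1=a$, so $Y=0$ and the sum bound forces $p_a(G)\leq X\leq n$. Since $\log P(G)\leq \alpha_1 X\leq n\log 2$ and $\log\expi(n,s,q)\geq 0$, the $\alpha^{p_a(G)}$ factor is free: any fixed $\alpha\in(0,1)$ works once $C$ is enlarged to absorb the $n\log(2/\alpha)$ overhead into $q^{Cn}$. The step I expect to demand the most care is the pair of pointwise inequalities together with their summation: these are tuned precisely so that the ``gain'' coefficient $\alpha_1$ is strictly less than the ``loss'' coefficient $\alpha_2$, and this strict gap is what both produces the $\alpha^{p_a(G)}$ improvement and matches $\expi(n,s,q)$ exactly in case (ii).
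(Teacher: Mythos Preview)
Your argument is correct and takes a genuinely different route from the paper. The paper proves the lemma by a double induction, on $s$ (outer) and on $n$ (inner): it defines the family $\Theta(G)$ of ``heavy'' $(s-1)$-subsets (those with $S(Y)\ge a\binom{s-1}{2}+(1-\xi)b$), and if $\Theta(G)\ne\emptyset$ it peels one such $Y$ off, observing that for every $z\notin Y$ one has $S_z(Y)\le a(s-1)-\xi$ and hence $P_z(Y)\le a^{s-2}(a-\xi)$, with strict inequality whenever some $w(zy)>a$. This gives a multiplicative gain per vertex in $R_1=\{z:\exists y\in Y,\ w(zy)>a\}$, and the induction on $n$ (applied to $G[[n]\setminus Y]$) handles the rest; when $\Theta(G)=\emptyset$ the graph is already an $(s-1,q')$-graph and the induction on $s$ applies.

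Your method bypasses all of this structure: you use only the global sum bound $S(G)\le\exs(n,s,q)$ from Theorems~\ref{bleqs-2} and~\ref{5.2}, together with the two sharp pointwise inequalities $\log(w/a)\le(w-a)\log\frac{a+1}{a}$ for $w\ge a$ and $\log(a/w)\ge(a-w)\log\frac{a}{a-1}$ for $1\le w\le a$. Since $\log\frac{a}{a-1}>\log\frac{a+1}{a}$, summing converts the linear sum constraint directly into the desired exponential product bound with a genuine $\alpha^{p_a(G)}$ factor. This is shorter, avoids both inductions, and in case~(ii) even eliminates the $q^{Cn}$ slack entirely. The paper's peeling argument is more structural and tracks where the excess multiplicity sits, which could be useful for finer statements, but for the lemma as stated your approach is strictly simpler.
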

\begin{proof}
We prove this by induction on $s\geq 2$, and for each fixed $s$, by induction on $n$.  Let $s\geq 2$ and $q,b,a$ be as in (i) or (ii) above.  Set
\[
\xi = \begin{cases} 0 &\text{ if case (i) holds}.\\
1&\text{ if case (ii) holds}.\end{cases}
\]
Suppose first $s=2$.  Set $\alpha=1/2$ and $C=2$.  Since $G$ is an $(n,2,a-\xi)$-graph, $p_a(G)=0$.  Therefore for all $n\geq 2$,
$$
P(G)\leq \expi(n,s,q)\leq \expi(n,s,q)q^{Cn}=\expi(n,s,q)q^{Cn}\alpha^{p_a(G)}.
$$
Assume now $s>2$.  Let $\mathcal{I}$ be the set of $(s',q',b')\in \mathbb{N}^3$ such that $2\leq s'<s$ and $s',q',b',a$ satisfy (i) or (ii).  Observe that $\mathcal{I}$ is finite.  Suppose by induction on $s$ that  $(s',q',b')\in \mathcal{I}$ implies there are $0<\alpha(s',q',b')<1$ and $C(s',q',b')>1$ such that for all $n'\geq s'$ and $G'\in F(n',s',q')$, $P(G)\leq \expi(n,s',q')q^{C(s',q',b')n}\alpha(s',q',b')^{p_a(G)}$.  Set
\begin{align*}
\alpha =\max \Big( \Big\{q^{-1},\Big(\frac{a^{s-2}(a-\xi)-1}{a^{s-2}(a-\xi)}\Big)^{\frac{1}{s-2}}, \Big(\frac{a-1}{a}\Big)^{\frac{1}{s-2}}\Big\}\cup \Big\{\alpha(s',q',b'):  (s',q',b')\in \mathcal{I}\Big\}\Big).
\end{align*}
Observe $0<\alpha<1$.  Choose $C \geq {s-1\choose 2}$ sufficiently large so that for all $n\leq s$
\begin{align}\label{ineq**}
q^{n\choose 2}\leq q^{C n}(a-\xi)^{{n\choose 2}}\Big(\frac{a}{a-\xi}\Big)^{t_{s-1}(n)}\alpha^{n\choose 2},
\end{align}
and so that for all $(s',q',b')\in \mathcal{I}$, $C(s',q',b')\leq C/2$ and $(\frac{a+1}{a})^{(s-3)/(s-2)}\leq q^{C/2}$.  Given $G\in F(n,s,q)$, set  
\begin{align*}
\Theta(G)=\Big\{Y\subseteq {[n]\choose s-1}: S(Y)\geq a{s-1\choose 2}+(1-\xi)b\Big\},
\end{align*} 
and let $A(n,s,q)=\{G\in F(n,s,q): \Theta(G)\neq \emptyset\}$. We show the following holds for all $n\geq 1$ and $G\in F(n,s,q)$ by induction on $n$. 
\begin{eqnarray}\label{ind*}
P(G)\leq q^{Cn}(a-\xi)^{{n\choose 2}}\Big(\frac{a}{a-\xi}\Big)^{t_{s-1}(n)}\alpha^{p_a(G)}.
\end{eqnarray}
This will finish the proof since $(a-\xi)^{n\choose 2}(\frac{a}{a-\xi})^{t_{s-1}(n)}\leq \expi(n,s,q)$ (by Theorem \ref{casei} (Extremal) for case (i) and Theorem \ref{caseii}(a) (Extremal) for case (ii)). If $n\leq s$ and $G\in F(n,s,q)$, then (\ref{ind*}) holds because of (\ref{ineq**}) and the fact that $P(G)\leq q^{n\choose 2}$.  So assume $n>s$, and suppose by induction that (\ref{ind*}) holds for all $s\leq n'<n$ and $G'\in F(n',s,q)$.  Let $G=([n],w)\in F(n,s,q)$.  Suppose first that $G\in A(n,s,q)$.  Choose $Y\in \Theta(G)$ and set $R=[n]\setminus Y$.  Given $z\in R$, note that
$$
a{s-1\choose 2}+(1-\xi)b+S_z(Y)\leq S(Y)+S_z(Y)=S(Y\cup\{z\})\leq a{s\choose 2}+(1-\xi)b-\xi,
$$
and therefore $S_z(Y)\leq a(s-1)-\xi$.  Then for all $z\in R$, Lemma \ref{arithgeom} implies $P_z(Y)\leq a^{s-2}(a-\xi)$, with equality only if $\{w(yz): y\in Y\}$ consists of $s-1-\xi$ elements equal to $a$ and $\xi$ elements equal to $a-1$.  Let $R_1=\{z\in R: \exists y\in Y, w(zy)>a\}$ and $R_2=R\setminus R_1$.  Then $z\in R_1$ implies $P_z(Y)<a^{s-2}(a-\xi)$, so $P_z(Y)\leq a^{s-2}(a-\xi)-1$.  Let $k=|R_1|$.  Observe that $G[R]$ is isomorphic to an element of $F(n', s,q)$, where $n'=n-|R|\geq 1$.  By induction (on $n$) and these observations we have that the following holds, where $p_a(R)=p_a({G[R]})$.
\begin{align*}
P(G)&=P(R)P(Y)\prod_{z\in R_1}P_z(Y)\prod_{z\in R_2}P_z(Y)\\
&\leq q^{C(n-s+1)}(a-\xi)^{{n-s+1\choose 2}}\Big(\frac{a}{a-\xi}\Big)^{t_{s-1}(n-s+1)}\alpha^{p_a(R)}q^{s-1\choose 2}\Big(a^{s-2}(a-\xi)-1\Big)^k\Big(a^{s-2}(a-\xi)\Big)^{n-s+1-k}\\
&\leq q^{C(n-s+2)}(a-\xi)^{{n-s+1\choose 2}}\Big(\frac{a}{a-\xi}\Big)^{t_{s-1}(n-s+1)}\alpha^{p_a(R)}\Big(a^{s-2}(a-\xi)-1\Big)^k\Big(a^{s-2}(a-\xi)\Big)^{n-s+1-k},
\end{align*}
where the second inequality is because ${s-1\choose 2}\leq C$.  Since $\alpha \geq \Big(\frac{a^{s-2}(a-\xi)-1}{a^{s-2}(a-\xi)}\Big)^{1/(s-2)}$,  this is at most
\begin{align}\label{ineq9}
q^{C(n-s+2)}(a-\xi)^{{n-s+1\choose 2}}\Big(\frac{a}{a-\xi}\Big)^{t_{s-1}(n-s+1)}\alpha^{p_a(R)+k(s-1)}\Big(a^{s-2}(a-\xi)\Big)^{n-s+1}.
\end{align}
Because $C(n-s+2)\leq Cn-{s-1\choose 2}$ and $q^{-1}\leq \alpha$,  we have $q^{C(n-s+2)}\leq q^{Cn}\alpha^{s-1\choose 2}$.  Combining this with the fact that $p_a(G)\leq p_a(R)+k(s-1)+{s-1\choose 2}$ implies that (\ref{ineq9}) is at most
\begin{align*}
&q^{Cn}(a-\xi)^{{n-s+1\choose 2}}\Big(\frac{a}{a-\xi}\Big)^{t_{s-1}(n-s+1)}\alpha^{p_a(R)+k(s-1)+{s-1\choose 2}}\Big(a^{s-2}(a-\xi)\Big)^{n-s+1}\\
= &q^{Cn}(a-\xi)^{{n-s+1\choose 2}+(s-1)(n-s+1)}\Big(\frac{a}{a-\xi}\Big)^{t_{s-1}(n-s+1)+(s-2)(n-s+1)}\alpha^{p_a(R)+k(s-1)+{s-1\choose 2}}\\
\leq &q^{Cn}(a-\xi)^{{n\choose 2}}\Big(\frac{a}{a-\xi}\Big)^{t_{s-1}(n)}\alpha^{p_a(G)}.
\end{align*}
We now have that $P(G)\leq q^{Cn}(a-\xi)^{{n\choose 2}}(\frac{a}{a-\xi})^{t_{s-1}(n)}\alpha^{p_a(G)}$, as desired.  Assume now $G\notin A(n,s,q)$.  Then for all $Y\in {[n]\choose s-1}$, $S(Y)\leq a{s-1\choose 2}+(1-\xi)b-1$.  Thus $G$ is an $(n,s',q')$-graph where $s'=s-1$ and $q'=a{s-1\choose 2}+(1-\xi)b-1$. Suppose $a=1$, $\xi=0$, and $b=0$.  Then $q'={s'\choose 2}-1$ and any $(n,s',q')$-graph must contain an edge of multiplicity $0$.  This implies $P(G)=0$ and (\ref{ind*}) holds.  We have the following three cases remaining, where $b'=\max\{b-1, 0\}$.
\begin{enumerate}
\item $\xi=0$, $b=0$, and $a\geq 2$.  In this case $q'=a{s'\choose 2}-1$ and $b'=0$.
\item $\xi=1$, $b=0$, and $a\geq 2$.  In this case $q'=a{s'\choose 2}-1$ and $b'=0$.
\item $\xi=0$, $1\leq b\leq s-2$, and $a\geq 1$.  In this case $q'=a{s'\choose 2}+b'$ and $0\leq b'\leq s'-2$.
\end{enumerate}
It is clear that in all three of these cases, $(s',q',b')\in \mathcal{I}$, so by our induction hypothesis (on $s$), there are $\alpha'=\alpha(s',q',b')\leq \alpha$ and $C'=C(s',q',b)$ such that
\begin{align}\label{indagain}
P(G)\leq \expi(n,s',q')(q')^{C'n}(\alpha')^{p_a(G)}\leq \expi(n,s',q')q^{C'n}\alpha^{p_a(G)},
\end{align}
where the inequality is because $q'\leq q$ and $\alpha'\leq \alpha$. By Theorem \ref{caseii}(a) (Extremal) if cases 1 or 2 hold, and by Theorem \ref{casei} (Extremal) if case 3 holds, we have the following.
\begin{align*}
\expi(n,s',q')\leq (a-\xi)^{n\choose 2}\Big(\frac{a}{a-\xi}\Big)^{t_{s'-1}(n)}\Big(\frac{a+1}{a}\Big)^{\lfloor \frac{b'}{b'+1}n\rfloor}\leq (a-\xi)^{n\choose 2}\Big(\frac{a}{a-\xi}\Big)^{t_{s-1}(n)}\Big(\frac{a+1}{a}\Big)^{\frac{s-3}{s-2}n},
\end{align*}
where the last inequality is because $t_{s'-1}(n)\leq t_{s-1}(n)$ and $\lfloor \frac{b'}{b'+1}n\rfloor \leq \frac{b'}{b'+1}n\leq \frac{s-3}{s-2}n$.  By choice of $C$, $(\frac{a+1}{a})^{\frac{s-3}{s-2}n}\leq q^{Cn/2}$.  Thus $\expi(n,s',q')\leq (a-\xi)^{n\choose 2}(\frac{a}{a-\xi})^{t_{s-1}(n)}q^{Cn/2}$.  Combining this with (\ref{indagain}) implies 
$$
P(G)\leq (a-\xi)^{n\choose 2}\Big(\frac{a}{a-\xi}\Big)^{t_{s-1}(n)}q^{Cn/2}q^{C'n}\alpha^{p_a(G)}\leq (a-\xi)^{n\choose 2}\Big(\frac{a}{a-\xi}\Big)^{t_{s-1}(n)}q^{Cn}\alpha^{p_a(G)},
$$
where the last inequality is because $C'\leq C/2$.  Thus (\ref{ind*}) holds.
\end{proof}

\noindent {\bf Proof of Theorem \ref{casei} (Stability).}
Let $s\geq 2$, $a\geq 1$, and $q=a{s\choose 2}+b$ for some $0\leq b\leq s-2$.  Fix $\delta>0$.  Given $G\in F(n,s,q)$, let $p_G=p_a(G)$ and $m_G=m_a(G)$.  Note that if $G\in F(n,s,q)$, then $|\Delta(G,\mathbb{U}_a(n))|=m_G+p_G$. Suppose first $a=1$, so $m_G=0$.  Combining Lemma \ref{uniformstability} with Lemma \ref{generallem} implies there are $\epsilon_1$ and $M_1$ such that if $n>M_1$ and $G\in F(n,s,q)$ satisfies $P(G)\geq \expi(n,s,q)^{1-\epsilon_1}$, then $|\Delta(G,\mathbb{U}_a(n))|=p_G\leq \delta n^2$.  Assume now $a>1$.  Combining Lemma \ref{uniformstability} with Lemma \ref{generallem} implies there are $\epsilon_1$ and $M_1$ such that if $n>M_1$ and $G\in F(n,s,q)$ satisfies $P(G)\geq \expi(n,s,q)^{1-\epsilon_1}$, then $p_G\leq \delta'n^2$, where
$$
\delta'= \min \Big\{ \frac{\delta }{2}, \frac{\delta \log (a/(a-1))}{4\log q}\Big\}.
$$
Set $\epsilon = \min\{\epsilon_1, \frac{\delta \log (a/(a-1))}{4\log q}\}$.  Suppose $n>M_1$ and $G\in F(n,s,q)$ satisfies $P(G)\geq \expi(n,s,q)^{1-\epsilon}$.  Our assumptions imply $p_G\leq \delta'n^2\leq \delta n^2/2$.  Observe that by definition of $p_G$ and $m_G$,
\begin{align}\label{labelcasei}
P(G)\leq a^{{n\choose 2}-m_G}(a-1)^{m_G}q^{p_G}=a^{n\choose 2}\Big(\frac{a-1}{a}\Big)^{m_G}q^{p_G}.
\end{align}
By Theorem \ref{casei}(a)(Extremal), $\expi(n,s,q)\geq a^{n\choose 2}$.  Therefore $P(G)\geq \expi(n,s,q)^{1-\epsilon}\geq a^{{n\choose 2}(1-\epsilon)}$. Combining this with (\ref{labelcasei}) yields 
$$
a^{{n\choose 2}(1-\epsilon)}\leq a^{n\choose 2}\Big(\frac{a-1}{a}\Big)^{m_G}q^{p_G}.
$$
Rearranging this, we obtain
$$
\Big(\frac{a}{a-1}\Big)^{m_G}\leq a^{\epsilon{n\choose 2}}q^{p_G}\leq q^{\epsilon {n\choose 2}+p_G}\leq q^{\epsilon n^2+p_G}.
$$
Taking logs, dividing by $\log(a/(a-1))$, and applying our assumptions on $p_G$ and $\epsilon$ yields 
$$
m_G\leq \frac{\epsilon n^2 \log q}{\log (a/(a-1))} + \frac{p_G\log q}{\log (a/(a-1)}\leq \frac{\delta n^2}{4}+\frac{\delta n^2}{4}=\frac{\delta n^2}{2}.
$$
Combining this with the fact that $p_G\leq \frac{\delta n^2}{2}$ we have that $|\Delta(G,\mathbb{U}_a(n))|\leq \delta n^2$.
\qed

\vspace{4mm}

The following classical result gives structural information about $n$-vertex $K_s$-free graphs with close to $t_{s-1}(n)$ edges.

\begin{theorem}[{\bf Erd\H{o}s-Simonovits \cite{erdosstability, simonovitsstability}}]\label{graphstability}
For all $\delta>0$ and $s\geq 2$, there is an $\epsilon>0$ such that every $K_{s}$-free graph with $n$ vertices and $t_{s-1}(n)-\epsilon n^2$ edges can be transformed into $T_{s-1}(n)$ by adding and removing at most $\delta n^2$ edges. 
\end{theorem}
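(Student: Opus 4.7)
The plan is to argue by induction on $s$, leveraging Tur\'an's theorem for the edge count and a symmetrization/cleaning step to upgrade a near-extremal $K_s$-free graph to the Tur\'an graph. The base case $s=2$ is vacuous (a $K_2$-free graph has no edges and $t_1(n)=0$). For the inductive step, fix $s\geq 3$ and suppose the statement holds with $s-1$ in place of $s$. Given $\delta>0$, choose $\delta'\ll \delta$ and then $\epsilon'\ll \delta'$ from the induction hypothesis; finally select $\epsilon>0$ much smaller than $\epsilon'$ (and $\delta$).

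First I would perform a \emph{cleaning step}. Iteratively delete vertices of degree strictly less than $(1-1/(s-1)-\eta)n$ for a parameter $\eta$ small compared to $\epsilon'$ but large compared to $\epsilon$; each such deletion removes fewer than $(1-1/(s-1)-\eta)n$ edges, and a standard counting argument shows that at most $O(\sqrt\epsilon)\,n$ vertices get deleted before the process terminates (otherwise the edge deficit would exceed the allowed $\epsilon n^2$). The resulting induced subgraph $G'$ on $n' = (1-o(1))n$ vertices is $K_s$-free with minimum degree at least $(1-1/(s-1)-\eta)n'$ and still has $e(G')\geq t_{s-1}(n')-2\epsilon n'^2$.

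Next I would pick any vertex $v\in G'$ and apply induction inside $N=N_{G'}(v)$, which is $K_{s-1}$-free. The minimum-degree condition forces $|N|\geq (1-1/(s-1)-\eta)n'$, and a convexity/Tur\'an argument on the edge count of $G'-v$ (comparing $e(G')$ against the extremal value $t_{s-1}(n')$) shows that $G'[N]$ has at least $t_{s-2}(|N|)-C\epsilon|N|^2$ edges for an absolute constant $C$. By the induction hypothesis, $G'[N]$ can be transformed into $T_{s-2}(|N|)$ by altering at most $\delta'|N|^2$ edges; this yields an equitable partition $N=V_1\sqcup\cdots\sqcup V_{s-2}$ that is $\delta'$-close to witnessing the Tur\'an structure on $N$.

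To finish I would extend this partition to all of $V(G)$: assign each remaining vertex $u\notin N\cup\{v\}$ to the class $V_i$ into which it sends the fewest edges (so its ``non-edges'' cluster in one part), and put $v$ into a new class $V_{s-1}$. Using $K_s$-freeness and the high minimum degree in $G'$, almost every vertex $u$ must be ``homogeneous'' with respect to the $V_i$-partition, otherwise $u$ together with vertices from each $V_i$ would form a $K_s$; this argument loses only $O(\epsilon)n^2$ edges. A final counting step compares the resulting number of cross-part edges to $t_{s-1}(n)$, and since $e(G)$ is within $\epsilon n^2$ of this maximum, the parts $V_1,\ldots,V_{s-1}$ must be equitable up to a $\delta n$-error. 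Reinserting the $O(\sqrt\epsilon)\,n$ vertices deleted in the cleaning step (placing each in an arbitrary part) changes at most $O(\sqrt\epsilon)\,n^2$ edges, so the total symmetric difference with $T_{s-1}(n)$ is at most $\delta n^2$.

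The main obstacle is Step 4: controlling the ``boundary'' vertices whose neighborhoods do not align cleanly with a single part of the partition coming from $N$, and certifying that the parts obtained are genuinely equitable rather than merely approximately homogeneous. This requires a delicate interplay between the $K_s$-free supersaturation bound (used to show misclassified vertices are rare) and Tur\'an-type extremality (used to force equitable part sizes); handling it via a robust ``stability-from-supersaturation'' argument, or alternatively via the Kruskal--Katona/Zykov symmetrization route, is the technical heart of the theorem.
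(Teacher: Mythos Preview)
The paper does not prove this statement at all: Theorem~\ref{graphstability} is the classical Erd\H{o}s--Simonovits stability theorem, and the paper simply cites \cite{erdosstability, simonovitsstability} and uses the result as a black box in the proof of Theorem~\ref{caseii}(a) (Stability). So there is no ``paper's own proof'' to compare your attempt against.

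That said, your sketch is a recognizable outline of one of the standard inductive proofs of stability, and the overall architecture (clean to high minimum degree, find a near-Tur\'an structure inside a neighbourhood by induction, extend the partition, reinsert deleted vertices) is sound. A couple of points would need tightening if you actually wrote this out: the claim that $G'[N]$ has at least $t_{s-2}(|N|)-C\epsilon|N|^2$ edges does not follow from ``convexity/Tur\'an on $G'-v$'' alone; one typically needs to combine the minimum-degree bound with an edge count across the cut $(N, V(G')\setminus N)$ and Tur\'an applied to $G'[V(G')\setminus N]$ to force $G'[N]$ to be near-extremal. Also, in the extension step you place $v$ alone in a new class $V_{s-1}$, but you never argue that $V_{s-1}$ ends up with roughly $n/(s-1)$ vertices; the equitability argument has to treat all $s-1$ classes, not just the $s-2$ coming from $N$. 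These are fixable, and you correctly flag the boundary-vertex/equitability step as the crux, but as written the proposal is a plan rather than a proof.
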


\noindent {\bf Proof of Theorem \ref{caseii}(a) (Stability).}
Let $s\geq 2$, $a\geq 2$, and $q=a{s\choose 2}-1$.  Fix $\delta>0$.  Given $G\in F(n,s,q)$, let $p_G=p_a(G)$, $m_G=m_{a-1}(G)$.  Choose $M_0$ and $\mu$ such that $\mu<\delta/2$ and so that Theorem \ref{graphstability} implies that any $K_s$-free graph with $n\geq M_0$ vertices and at least $(1-\mu)t_{s-1}(n)$ edges can be made into $T_{s-1}(n)$ by adding or removing at most $\frac{\delta n^2}{3}$ edges.  Set 
\[ A=\begin{cases}
2 & \text{ if }a=2\\
\frac{a-1}{a-2}& \text{ if }a>2\end{cases}\]
Combining Lemma \ref{uniformstability} with Lemma \ref{generallem} implies there are $\epsilon_1, M_1$ so that if $n>M_1$ and $G\in F(n,s,q)$ satisfies $P(G)\geq \expi(n,s,q)^{1-\epsilon_1}$, then $p_G\leq \delta' n^2$, where 
\begin{align}\label{caseiistab*}
\delta'=\min \Big\{\frac{\delta }{3}, \frac{\mu \log (a/(a-1))}{2\log q}, \frac{\delta \log A}{6\log q}\Big\}.
\end{align}
Let  
$$
\epsilon=\min \Big\{\epsilon_1, \frac{\delta \log A}{6\log q}, \frac{\mu \log(a/(a-1))}{2\log q}\Big\} \qquad \hbox{ and }\qquad M=\max\{M_0,M_1\}.
$$
Suppose now that $n>M$ and $G\in F(n,s,q)$ satisfies $P(G)\geq \expi(n,s,q)^{1-\epsilon}$.  By assumption, $p_G\leq \delta'n^2\leq \frac{\delta n^2}{3}$.  We now bound $m_G$.  Note that if $a=2$ and $P(G)\neq 0$, then $m_G=0$.  If $a>2$, observe that by definition of $p_G$ and $m_G$,
\begin{align}\label{ineq*}
P(G)\leq q^{p_G}(a-2)^{m_G}a^{e_a(G)}(a-1)^{e_{a-1}(G)} \leq q^{p_G}\Big(\frac{a-2}{a-1}\Big)^{m_G}a^{e_a(G)}(a-1)^{{n\choose 2}-e_a(G)},
\end{align}
where the last inequality is because $e_{a-1}(G)+m_G\leq {n\choose 2}-e_a(G)$.  Note that Tur\'{a}n's theorem and the fact that $G$ is an $(n, s,q)$-graph implies that $e_a(G)\leq t_{s-1}(n)$, so 
$$
a^{e_a(G)}(a-1)^{{n\choose 2}-e_a(G)}\leq a^{t_{s-1}(n)}(a-1)^{{n\choose 2}-t_{s-1}(n)}=\expi(n,s,q),
$$
where the last equality is from Theorem \ref{caseii}(a) (Extremal).  Combining this with (\ref{ineq*}) yields 
$$
\expi(n,s,q)^{1-\epsilon}\leq P(G)\leq q^{p_G}\Big(\frac{a-2}{a-1}\Big)^{m_G}\expi(n,s,q).
$$
Rearranging $\expi(n,s,q)^{1-\epsilon}\leq q^{p_G}(\frac{a-2}{a-1})^{m_G}\expi(n,s,q)$ and using that $\expi(n,s,q)\leq q^{n^2}$, we obtain 
$$
A^{m_G}=\Big(\frac{a-1}{a-2}\Big)^{m_G}\leq q^{p_G}\expi(n,s,q)^{\epsilon}\leq q^{p_G+\epsilon n^2}.
$$
Taking logs, dividing by $\log A$, and applying our assumptions on $p_G$ and $\epsilon$ we obtain $m_G<\delta n^2/3$.  Using (\ref{ineq*}) and  $a^{t_{s-1}(n)}(a-1)^{{n\choose 2}-t_{s-1}(n)}=\expi(n,s,q)$, we have
$$
\expi(n,s,q)^{1-\epsilon}\leq P(G)\leq q^{p_G}a^{e_a(G)}(a-1)^{{n\choose 2}-e_a(G)}= q^{p_G}\expi(n,s,q)\Big(\frac{a}{a-1}\Big)^{e_a(G)-t_{s-1}(n)}. 
$$
Rearranging this we obtain 
$$
\Big(\frac{a}{a-1}\Big)^{t_{s-1}(n)-e_a(G)}\leq q^{p_G}\expi(n,s,q)^{\epsilon}\leq q^{p_G+\epsilon n^2}.
$$
Taking logs, dividing by $\log (a/(a-1))$, and using the assumptions on $p_G$ and $\epsilon$ we obtain that
$$
t_{s-1}(n) -e_a(G)\leq \frac{p_G\log q}{\log(a/(a-1))}+\frac{\epsilon n^2 \log q}{\log (a/(a-1))} \leq \frac{\mu n^2}{2}+\frac{\mu n^2}{2}=\mu n^2.
$$
Let $H$ be the graph with vertex set $[n]$ and edge set $E=E_a(G)$.  Then $H$ is $K_s$-free, and has $e_a(G)$ many edges.  Since $t_{s-1}(n)-e_a(G)\leq \mu n^2$, Theorem \ref{graphstability} implies that $H$ is $\frac{\delta}{3}$-close to some $H'= T_{s-1}(n)$.  Define $G'\in F(n,s,q)$ so that $E_a(G')=E(H')$ and $E_{a-1}(G')={n\choose 2}\setminus E_a(G')$.  Then $G'\in \mathbb{T}_{s-1, a}(n)$ and 
$$
\Delta(G,G')\subseteq (E_a(G)\Delta E_a(G'))\cup \bigcup_{i\notin \{a,a-1\}}E_i(G) = \Delta(H,H') \cup \bigcup_{i\notin \{a,a-1\}}E_i(G).
$$
This implies $|\Delta(G,G')|\leq |\Delta(H,H')|+p_G+m_G\leq \frac{\delta}{3}n^2+  \frac{\delta}{3}n^2+ \frac{\delta}{3}n^2 = \delta n^2$.
\qed

\vspace{5mm}

\subsection{Proof of Theorem \ref{caseii}(b) (Stability)}\label{caseiiisection}
In this subsection we prove Theorem \ref{caseii}(b) (Stability). We first prove two lemmas.  

\begin{lemma}\label{caseiiilemma}
Let $s\geq 4$, $a\geq 2$, and $q=a{s\choose 2}-t$ for some $2\leq t\leq \frac{s}{2}$.  For all $\lambda>0$ there are $M$ and $\epsilon>0$ such that the following holds.  Suppose $n>M$ and $G\in F(n,s,q)$ satisfies $P(G)>\expi(n,s,q)^{1-\epsilon}$.  Then $k(G)< \lambda n$, where $k(G)$ is as defined in Lemma \ref{newlem}.  
\end{lemma}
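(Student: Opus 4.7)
The plan is to derive this as a direct consequence of Lemma \ref{newlem}, combined with a trivial upper bound on $\expi(n,s,q)$. The essential point is that Lemma \ref{newlem} already gives an upper bound on $P(G)$ that decays geometrically with $k(G)n$, so if $P(G)$ is very close to the extremal value, then $k(G)$ cannot be more than linear in $n$ with a small coefficient.

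First I would invoke Lemma \ref{newlem} to obtain constants $C > 1$ and $0 < \alpha < 1$ (depending only on $s,q,a,t$) such that every $G \in F(n,s,q)$ satisfies
\[
P(G) \leq C^{k(G)} \alpha^{k(G)n} \expi(n,s,q).
\]
Combining this with the hypothesis $P(G) > \expi(n,s,q)^{1-\epsilon}$ yields
\[
\expi(n,s,q)^{-\epsilon} < C^{k(G)} \alpha^{k(G)n} = \bigl(C\alpha^n\bigr)^{k(G)}.
\]
Since $\mu(G) \leq q$ implies $\expi(n,s,q) \leq q^{\binom{n}{2}}$, taking logarithms gives
\[
-\epsilon\binom{n}{2}\log q \;\leq\; -\epsilon \log \expi(n,s,q) \;<\; k(G)\bigl(\log C + n\log \alpha\bigr).
\]
For sufficiently large $n$, the quantity $\log C + n\log\alpha$ is negative (since $\log\alpha < 0$), so dividing flips the inequality and yields
\[
k(G) \;<\; \frac{\epsilon\binom{n}{2}\log q}{n\log(1/\alpha) - \log C}.
\]

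For large enough $n$ the denominator is at least $\tfrac{1}{2}n\log(1/\alpha)$, so the right-hand side is at most $\epsilon n\log q/\log(1/\alpha)$. Thus, choosing $\epsilon < \lambda \log(1/\alpha)/\log q$ and $M$ large enough that the above asymptotic simplification is valid forces $k(G) < \lambda n$. There is no real obstacle here: the lemma is essentially a repackaging of Lemma \ref{newlem} into a statement about approximate extremizers, and the only work is checking the inequality chain carefully. The one subtlety is to make sure $\epsilon$ is chosen before $M$, and that $M$ is taken large enough that the lower bound $n\log(1/\alpha) - \log C \geq \tfrac{1}{2}n\log(1/\alpha)$ holds — both of which depend only on $\lambda$, $\alpha$, $C$, and $q$.
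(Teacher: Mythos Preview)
Your proof is correct and follows essentially the same route as the paper: both combine the inequality $P(G)\leq C^{k(G)}\alpha^{k(G)n}\expi(n,s,q)$ from Lemma~\ref{newlem} with an upper bound on $\expi(n,s,q)$ and solve for $k(G)$. The only cosmetic difference is that the paper uses the sharper bound $\expi(n,s,q)<\eta^{n^2}$ (with $\eta=\expi(s,q)$) to calibrate $\epsilon$, whereas you use the cruder $\expi(n,s,q)\leq q^{\binom{n}{2}}$; either suffices, and yours is arguably a bit simpler since it avoids invoking the extremal computation.
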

\begin{proof}
Fix $\lambda>0$.  Set $\eta =a^{\frac{s-t-1}{s-t}}(a-1)^{\frac{1}{s-t}}$ and choose $C$ and $\alpha$ as in Lemma \ref{newlem}.  Choose $\epsilon>0$ so that $\alpha^{\lambda /2}=\eta^{-\epsilon }$.  By Theorem \ref{caseii}(b) (Extremal), $\expi(n,s,q)=\eta^{{n\choose 2}+o(n^2)}$.   Assume $M$ sufficiently large so that for all $n\geq M$,  (\ref{caseii}) holds for all $G\in F(n,s,q)$, $\expi(n,s,q)< \eta^{n^2}$, $C^{\lambda n}\leq \eta^{\epsilon n^2}$,  and $C\alpha^n<1$.  Fix $n\geq M$ and suppose towards a contradiction that $G\in F(n,s,q)$ satisfies $P(G)>\expi(n,s,q)^{1-\epsilon}$ and $k(G)\geq \lambda n$.  By Lemma \ref{newlem} and the facts that $C\alpha^n<1$ and $k(G)\geq 1$, we obtain that
$$
P(G)\leq C^{k(G)}\alpha^{n k(G)}\expi(n,s,q)=(C\alpha^n)^{k(G)}\expi(n,s,q)\leq (C\alpha^n)^{\lambda n}\expi(n,s,q).
$$
By assumption on $n$ and definition of $\epsilon$, $(C\alpha^n)^{\epsilon n}=C^{\lambda n}\alpha^{\lambda n^2}=C^{\lambda n}\eta^{-2\epsilon n^2}\leq \eta^{-\epsilon n^2}$.  Thus 
$$
P(G)\leq \eta^{-\epsilon n^2}\expi(n,s,q)< \expi(n,s,q)^{1-\epsilon},
$$
where the last inequality is because by assumption, $\expi(n,s,q)< \eta^{n^2}$.  But this contradicts our assumption that $P(G)>\expi(n,s,q)^{1-\epsilon}$.
\end{proof}

Given a multigraph $G=(V,w)$, let $\calH(G,s,q)=\{Y\in {V\choose s}: S(Y)>q\}$.  Observe that $G$ is an $(s,q)$-graph if and only if $\calH(G,s,q)=\emptyset$. 

\begin{lemma}\label{triangleremoval}
Let $s,q, m\geq 2$ be integers.  For all $0<\delta<1$, there is $0<\lambda<1$ and $N$ such that $n>N$ implies the following.  If $G=([n],w)$ has $\mu(G)\leq m$ and $\calH(G,s,q)$ contains strictly less than $\lceil \lambda n\rceil$ pairwise disjoint elements, then $G$ is $\delta$-close to an element in $F(n,s,q)$.
\end{lemma}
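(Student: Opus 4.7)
The plan is a greedy-covering argument together with a zero-out modification. Start by extracting a maximal collection $Y_1,\ldots,Y_k$ of pairwise disjoint elements of $\calH(G,s,q)$. By hypothesis, $k\leq \lceil\lambda n\rceil-1\leq \lambda n$, so if $B:=Y_1\cup\cdots\cup Y_k$ then $|B|=sk\leq s\lambda n$. The crucial consequence of maximality is that every bad $s$-set meets $B$; equivalently, every $s$-subset of $[n]\setminus B$ has weight sum at most $q$. Since edge weights are nonnegative, this also gives $\sum_{xy\in\binom{Z}{2}}w(xy)\leq q$ for every $Z\subseteq [n]\setminus B$ with $|Z|\leq s$, as long as $|[n]\setminus B|\geq s$ so that $Z$ extends to some $s$-subset of $[n]\setminus B$.

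Now define the modified multigraph $G'=([n],w')$ by
\[
w'(xy)=\begin{cases} 0 & \text{if } \{x,y\}\cap B\neq \emptyset,\\ w(xy) & \text{otherwise.}\end{cases}
\]
The number of edges changed is at most the number of pairs meeting $B$, which is bounded by $|B|\cdot n\leq s\lambda n^2$. Setting $\lambda=\delta/s$ (or any smaller positive constant) therefore guarantees $|\Delta(G,G')|\leq \delta n^2$, so that $G$ and $G'$ are $\delta$-close.

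It remains to check $G'\in F(n,s,q)$. Fix $Y\in\binom{[n]}{s}$. If $Y\cap B=\emptyset$, then $G'[Y]=G[Y]$, and maximality of the family forces $Y\notin\calH(G,s,q)$, i.e.\ $S(G'[Y])\leq q$. If $Y\cap B\neq\emptyset$, all edges of $Y$ incident to $Y\cap B$ have been zeroed, so $S(G'[Y])=\sum_{xy\in\binom{Y\setminus B}{2}}w(xy)$, where $|Y\setminus B|\leq s-1$. Provided $n$ is large enough that $|[n]\setminus B|\geq s$, extend $Y\setminus B$ to an $s$-subset $Y'\subseteq [n]\setminus B$; by maximality $S(G[Y'])\leq q$, and nonnegativity of the weights yields $S(G'[Y])\leq S(G[Y'])\leq q$.

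There is no serious obstacle: the only quantitative point is picking $N$ to ensure $n-|B|\geq s$, which is automatic once $\lambda<1/s$ and $n\geq s/(1-s\lambda)$, say $N=\lceil 2s/(1-\delta)\rceil$. The multiplicity bound $\mu(G)\leq m$ does not enter the argument directly; the zero-out step sidesteps any need for it by eliminating all edges incident to the cover $B$ regardless of their original weight.
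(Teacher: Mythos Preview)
Your argument is correct, and it takes a genuinely different and more elementary route than the paper. The paper's proof proceeds in two steps: first it invokes a bound from \cite{handbookcombo2} to the effect that if the hypergraph $\calH(G,s,q)$ has matching number less than $\lceil \lambda n\rceil$ then $|\calH(G,s,q)|<\epsilon\binom{n}{s}$, and then it feeds this conclusion into the multigraph removal lemma (Theorem~\ref{triangleremoval1}, from \cite{MT415}) to produce an element of $F(n,s,q)$ that is $\delta$-close to $G$. Thus the paper relies on nontrivial external input, in particular a removal lemma.

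Your proof bypasses both ingredients. By taking a maximal disjoint family and letting $B$ be its vertex support, you obtain a small vertex cover of $\calH(G,s,q)$, and then simply zeroing all edges incident to $B$ yields an $(s,q)$-graph directly; the $\delta$-closeness is immediate from the bound $|B|\leq s\lambda n$. This is shorter, self-contained, and as you observe it does not even use the hypothesis $\mu(G)\leq m$: your construction works for multigraphs of arbitrary multiplicity. The paper's route, by contrast, needs the multiplicity bound in order to apply the removal lemma. What the paper's approach buys is a modification that stays within the range of multiplicities $\{0,\ldots,m\}$ already present and might in principle change fewer edges than the crude zero-out, but for the lemma as stated your argument is strictly simpler.
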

\begin{proof}
Fix $0<\delta<1$.  Observe we can view any multigraph $G$ with $\mu(G)\leq m$ as an edge-colored graph with colors in $\{0,\ldots, m\}$. By Theorem \ref{triangleremoval1}, there is $\epsilon$ and $M$ such that if $n>M$ and $G=([n],w)$ has $\mu(G)\leq m$ and $\calH(G,s,q)\leq \epsilon {n\choose s}$, then $G$ is $\delta$-close to an element of $F(n,s,q)$.  Let $\lambda:=\epsilon/s$ and $N=\max \{M, \frac{s}{1-\lambda s}\}$.  We claim this $\lambda$ and $N$ satisfy the desired conclusions.  Suppose towards a contradiction that $n>M$ and $G=([n],w)$ has $\mu(G)\leq m$, $\calH(n,s,q)$ contains strictly less than $\lceil \lambda n \rceil$ pairwise disjoint elements, but $G$ is $\delta$-far from every element in $F(n,s,q)$.  Then $\calH(G,s,q)> \epsilon {n\choose s}$ by choice of $M$ and $\lambda$.  By our choice of $N$, $ \lceil \lambda n \rceil s \leq (\lambda n+1)s\leq n$.  Then Proposition 11.6 in \cite{handbookcombo2} and our assumptions imply $|\calH(G,s,q)|\leq (\lceil\lambda n \rceil-1){n-1\choose s-1}$.  But now
$$
|\calH(G,s,q)|\leq (\lceil \lambda n \rceil -1){n-1\choose s-1}< \lambda n {n-1\choose s-1}=\Big(\frac{\epsilon n}{s}\Big)\Big(\frac{s}{n}\Big){n\choose s}=\epsilon {n\choose s},
$$
a contradiction.
\end{proof}

\noindent {\bf Proof of Theorem \ref{caseii}(b) (Stability).}
Let $s\geq 4$, $a\geq 2$, and $q=a{s\choose 2}-t$ for some $2\leq t\leq \frac{s}{2}$.  Fix $\delta>0$.  Let $s'=s-t+1$ and $q'=a{s'\choose 2}-1$.  Note Theorem \ref{caseii} (Extremal) implies that for sufficiently large $n$, $\calP(n,s,q)=\mathbb{T}_{s'-1,a}(n)$, $\expi(n,s',q')=\expi(n,s,q)$, and $\expi(s',q')=\expi(s,q)=\eta$, where $\eta=(a-1)(\frac{a}{a-1})^{(s'-2)/(s'-1)}$.   

Apply Theorem \ref{caseii} (a) (Stability) for $(s',q')$ to $\delta/2$ to obtain $\epsilon_0$.     By replacing $\epsilon_0$ if necessary, assume $\epsilon_0<4\delta/\log \eta$.  Set $\epsilon_1=\epsilon_0\log \eta/(8\log q)$ and note $\epsilon_1<\delta/2$.  Apply Lemma \ref{triangleremoval} to $\epsilon_1$ and $m=q$ to obtain $\lambda$ such that for large $n$ the following holds.  If $G=([n],w)$ has $\mu(G)\leq q$ and $\calH(G,s',q')$ contains strictly less than $\lceil \lambda n\rceil$ pairwise disjoint elements, then $G$ is $\epsilon_1$-close to an element in $F(n,s',q')$.  Finally, apply Lemma \ref{caseiiilemma} for $s, q, t$ to $\lambda$ to obtain $\epsilon_2>0$.

Choose $M$ sufficiently large for the desired applications of Theorems \ref{caseii}(a) (Stability) and \ref{caseii}(b) (Extremal) and Lemmas \ref{caseiiilemma} and \ref{triangleremoval}.  Set $\epsilon =\min \{\epsilon_2, \epsilon_0/2\}$.  Suppose $n>M$ and $G\in F(n,s,q)$ satisfies $P(G)\geq \expi(n,s,q)^{1-\epsilon}$.  Then Lemma \ref{caseiiilemma} and our choice of $\epsilon$ implies $k(G)< \lambda n$.  Observe that by the definitions of $s',q'$,
$$
\Big\{Y\in {[n]\choose s-t+1}: S(Y)\geq a{s-t+1\choose 2}\Big\}=\Big\{Y\in {[n]\choose s'}: S(Y)\geq q'+1\Big\}=\calH(G,s',q').
$$
Thus $k(G)<  \lambda n$ means $\calH(G,s',q')$ contains strictly less than $\lceil \lambda n\rceil$ pairwise disjoint elements.  Lemma \ref{triangleremoval} then implies $G$ is $\epsilon_1$-close to some $G'\in F(n,s',q')$.  Combining this with the definition of $\epsilon_1$ yields 
\begin{align}\label{caseiiiineq}
P(G')\geq P(G)q^{-|\Delta(G,G')|}\geq P(G)q^{-\epsilon_1 n^2}= P(G)\eta^{-\epsilon_0n^2/8}\geq \expi(n,s,q)^{1-\epsilon}\eta^{-(\epsilon_0/2){n\choose 2}}.
\end{align}
By Proposition \ref{density}, $\expi(n,s,q)\geq \expi(s,q)^{n\choose 2}= \eta^{n\choose 2}$.  Combining this with (\ref{caseiiiineq}) and the definition of $\epsilon$ yields 
\begin{align}\label{caseiii3}
P(G')\geq \expi(n,s,q)^{1-\epsilon}\eta^{-(\epsilon_0/2){n\choose 2}}\geq \expi(n,s,q)^{1-\epsilon -\epsilon_0/2}\geq \expi(n,s,q)^{1-\epsilon_0}.
\end{align}
Since $\expi(n,s,q)=\expi(n,s',q')$, (\ref{caseiii3}) implies $P(G')\geq \expi(n,s',q')^{1-\epsilon_0}$, so Theorem \ref{caseii}(a) (Stability) implies $G'$ is $\delta/2$-close to some $G''\in \mathbb{T}_{s'-1,a}(n)=\mathbb{T}_{s-t,a}(n)$.  Now we are done, since 
$$
|\Delta(G,G'')|\leq |\Delta(G,G')|+|\Delta(G',G'')|\leq \epsilon_1n^2 +\delta n^2/2 \leq \delta n^2.
$$
\qed

\section{Extremal Result for $(n,4,9)$-graphs}\label{extcaseivsection1}


In this section we prove Theorems \ref{caseiv1}.  We first prove one of the inequalities needed for Theorem \ref{caseiv1}.

\begin{lemma}\label{fn1}
For all $n\geq4$, $2^{\ex(n,\{C_3,C_4\})}\leq \expi(n,4,9)$.
\end{lemma}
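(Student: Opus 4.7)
The plan is to construct, for each $n\ge 4$, a specific multigraph $G\in F(n,4,9)$ with $P(G)=2^{\ex(n,\{C_3,C_4\})}$; this directly yields $\expi(n,4,9)\geq 2^{\ex(n,\{C_3,C_4\})}$.

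Let $H$ be a graph on vertex set $[n]$ with no $C_3$ and no $C_4$ (as non-induced subgraphs) achieving $e(H)=\ex(n,\{C_3,C_4\})$. Define the multigraph $G=([n],w)$ by
\[
w(xy)=\begin{cases} 2 & \text{if }xy\in E(H),\\ 1 & \text{otherwise.}\end{cases}
\]
Then $P(G)=2^{e(H)}\cdot 1^{\binom{n}{2}-e(H)}=2^{\ex(n,\{C_3,C_4\})}$, so the whole task reduces to verifying that $G\in F(n,4,9)$.

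Fix any $X\in\binom{[n]}{4}$ and let $e_H(X)$ denote the number of edges of $H$ with both endpoints in $X$. Since each pair in $\binom{X}{2}$ has multiplicity either $1$ or $2$,
\[
\sum_{xy\in\binom{X}{2}}w(xy)=\binom{4}{2}+e_H(X)=6+e_H(X).
\]
So it suffices to show $e_H(X)\le 3$ for every $4$-set $X$. This is the one place where the forbidden subgraph hypothesis is used: any graph on $4$ vertices with at least $4$ edges must contain a triangle or a $4$-cycle (the only simple graphs on $4$ vertices with $4$ edges are $C_4$, $K_3$ with a pendant edge, and $K_4$ minus an edge, and each contains a $C_3$ or a $C_4$). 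Since $H$ is $\{C_3,C_4\}$-free, we conclude $e_H(X)\le 3$, so $\sum_{xy\in\binom{X}{2}}w(xy)\le 9$ and $G\in F(n,4,9)$.

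There is no serious obstacle here; the only content beyond the construction is the small case analysis showing that $4$ edges on $4$ vertices forces a $C_3$ or $C_4$. The matching upper bound $\expi(n,4,9)\leq 2^{\ex(n,\{C_3,C_4\})}$ is the harder direction (requiring a structural argument on $(n,4,9)$-graphs) and will presumably be treated in a subsequent lemma.
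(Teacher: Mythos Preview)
Your proof is correct and follows essentially the same approach as the paper: construct the multigraph from an extremal $\{C_3,C_4\}$-free graph by assigning weight $2$ to its edges and $1$ elsewhere, then verify $G\in F(n,4,9)$ by noting that any four vertices span at most three edges of $H$. The only difference is that you spell out the small case analysis for why four edges on four vertices force a $C_3$ or $C_4$, whereas the paper leaves this implicit.
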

\begin{proof}
Fix $G=([n],E)$ an extremal $\{C_3,C_4\}$-free graph, and let $G'=([n],w)$ where $w(xy)=2$ for all $xy\in E$ and $w(xy)=1$ for all $xy\in {n\choose 2}\setminus E$.  Suppose $X\in {[n]\choose 4}$.  Since $G$ is $\{C_3,C_4\}$-free, $|E\cap{X\choose 2}|\leq 3$.  Thus $\{w(xy): xy\in {X\choose 2}\}$ contains at most $3$ elements equal to $2$ and the rest equal to $1$, so $S(X)\leq 9$.  This shows $G'\in F(n,4,9)$.  Thus $2^{|E|}=2^{\ex(n,\{C_3,C_4\})}=P(G')\leq \expi(n,4,9)$.
\end{proof}

To prove the reverse inequality, our strategy will be to show that if $G\in F(n,4,9)$ has no edges of multiplicity larger than $2$, then $P(G)\leq 2^{\ex(n,\{C_3,C_4\})}$ (Theorem \ref{fn3}).  We will then show that all product extremal $(4,9)$-graphs have no edges of multiplicity larger than $2$ (Theorem \ref{reductionlemma}).  Theorem \ref{caseiv1} will then follow.  We begin with a few definitions and lemmas.

\begin{definition}
Suppose $n\geq 1$.  Set $F_{\leq 2}(n,4,9)=\{G\in F(n,4,9): \mu(G)\leq 2\}$ and 
$$
D(n)=F_{\leq 2}(n,4,9)\cap F(n,3,5).
$$
\end{definition}

\begin{lemma}\label{reduction}
For all $n\geq 4$, if $G=([n],w)\in D(n)$, then $P(G)\leq 2^{\ex(n,\{C_3,C_4\})}$. 
\end{lemma}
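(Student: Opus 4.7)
The plan is to split into cases based on whether $G$ has an edge of multiplicity $0$. If $w(xy) = 0$ for some pair $xy$, then $P(G) = 0$ and the inequality holds trivially. So I would reduce to the case in which every edge of $G$ has multiplicity in $\{1, 2\}$, and consequently $P(G) = 2^{e_2(G)}$, where $e_2(G) = |E_2(G)|$.

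In this remaining case, I would introduce the auxiliary simple graph $H = ([n], E_2(G))$, so that $P(G) = 2^{|E(H)|}$. The strategy is then to show $H$ is $\{C_3, C_4\}$-free; this immediately gives $|E(H)| \leq \ex(n,\{C_3,C_4\})$ and hence the desired bound. The triangle-freeness uses only the $(3,5)$ constraint: a triangle in $H$ on $\{x,y,z\}$ would force $w(xy)+w(yz)+w(xz) = 6 > 5$, contradicting $G \in F(n,3,5)$.

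The $C_4$-freeness is where the assumption that no edge has multiplicity $0$ is crucial, and uses the $(4,9)$ constraint in an essential way. Suppose $H$ contains a $4$-cycle on $\{x,y,z,u\}$ with $H$-edges $xy, yz, zu, ux$. Each of these has weight $2$, contributing $8$ to the sum over the $4$-set. The two diagonals $xz$ and $yu$ lie in $G$ with multiplicities at least $1$ (by the case reduction), so the total weight on $\{x,y,z,u\}$ is at least $8 + 1 + 1 = 10$, contradicting $G \in F(n,4,9)$.

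There is no serious obstacle here; the only subtlety is recognizing that the $(4,9)$ condition alone does not forbid a $C_4$ in $H$, and that one needs to combine it with the fact that the diagonals cannot have multiplicity $0$ (which is free once we dispense with the $P(G) = 0$ case). The argument is short and the conclusion $P(G) = 2^{|E(H)|} \leq 2^{\ex(n,\{C_3,C_4\})}$ then follows directly.
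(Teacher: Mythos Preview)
Your proposal is correct and follows essentially the same approach as the paper: reduce to $P(G)>0$, form the auxiliary graph $H$ on the multiplicity-$2$ pairs, and argue $H$ is $\{C_3,C_4\}$-free using the $(3,5)$ and $(4,9)$ constraints. Your write-up is in fact more explicit than the paper's, which simply asserts ``since $G\in F(n,4,9)$, $H$ is $C_4$-free'' without spelling out (as you do) that the diagonals contribute at least $1$ each because $P(G)>0$.
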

\begin{proof} If $P(G)=0$ we are done, so assume $P(G)>0$.  Let $H=([n],E)$ be the graph where $E=\{xy\in {[n]\choose 2}: w(xy)=2\}$.  Since $P(G)>0$ and $\mu(G)\leq 2$, $G$ contains all edges of multiplicity $1$ or $2$.  Consequently, $P(G)=2^{|E|}$.  Since $G\in F(n,3,5)$, $H$ is $C_3$-free and since $G\in F(n,4,9)$, $H$ is $C_4$-free, so $|E|\leq \ex(n, \{C_3,C_4\})$.  This shows $P(G)=2^{|E|}\leq 2^{\ex(n,\{C_3,C_4\})}$.
\end{proof}

\vspace{3mm}
The following lemma gives us useful information about elements of $F(n,4,9)\setminus F(n,3,5)$.

\begin{lemma}\label{fn2}
Suppose $n\geq 4$ and $G=([n],w)\in F(n,4,9)$ satisfies $P(G)>0$.  If there is $X\in {[n]\choose 3}$ such that $S(X)\geq 6$, then $P(X)\leq 2^3$ and $w(xy)=1$ for all $x\in X$ and $y\in [n]\setminus X$.  Consequently
\begin{align*}
P(G)=P(X)P([n]\setminus X)\leq 2^3P([n]\setminus X).
\end{align*}
\end{lemma}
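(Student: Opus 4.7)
The proof plan rests on three observations that feed into one another, and the only input I need from the hypotheses is that $P(G)>0$ forces every edge multiplicity to be at least $1$, together with the $(4,9)$-constraint applied to $4$-sets of the form $X\cup\{y\}$.

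First I would show simultaneously that $S(X)=6$ and that $w(xy)=1$ for every $x\in X$, $y\in[n]\setminus X$. Fix any $y\in [n]\setminus X$. Since $G\in F(n,4,9)$ and $X\cup\{y\}\in \binom{[n]}{4}$, we have
\[
S(X)+S_y(X)=S(X\cup\{y\})\leq 9.
\]
Because $P(G)>0$, no edge has weight $0$, so each of the three edges from $y$ to $X$ has weight at least $1$, giving $S_y(X)\geq 3$. Combined with the hypothesis $S(X)\geq 6$, both inequalities become equalities: $S(X)=6$ and $S_y(X)=3$. The latter, together with $w(xy)\geq 1$ for each $x\in X$, forces $w(xy)=1$ for every $x\in X$. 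Since $y\in[n]\setminus X$ was arbitrary, this yields the claimed edge-weights between $X$ and its complement.

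Next I would bound $P(X)$. The three edge multiplicities inside $X$ are positive integers summing to $6$, so by the AM-GM inequality (or by Lemma \ref{arithgeom} with $\ell=3$, $a=2$, $k=0$) their product is at most $2^3=8$, with equality when each multiplicity equals $2$. Hence $P(X)\leq 2^3$.

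Finally, for the ``consequently'' statement, I would factor the global product along the partition $\binom{[n]}{2}=\binom{X}{2}\cup\binom{[n]\setminus X}{2}\cup\bigl(X\times([n]\setminus X)\bigr)$ to get
\[
P(G)=P(X)\,P([n]\setminus X)\prod_{x\in X,\, y\in [n]\setminus X}w(xy).
\]
The first step of the proof showed that every factor in the last product is $1$, so $P(G)=P(X)P([n]\setminus X)\leq 2^3 P([n]\setminus X)$. There is no real obstacle here beyond keeping track of the two simultaneous constraints $S(X)\geq 6$ and $S(X\cup\{y\})\leq 9$, which together with positivity pin down the configuration exactly.
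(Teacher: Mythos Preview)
Your proof is correct and follows essentially the same approach as the paper's: both apply the $(4,9)$-constraint to the $4$-set $X\cup\{y\}$, use $P(G)>0$ to get $S_y(X)\geq 3$, and combine with $S(X)\geq 6$ to pin down $S(X)=6$ and $S_y(X)=3$, then invoke Lemma~\ref{arithgeom}. The only cosmetic difference is that you deduce both equalities in one step, whereas the paper derives $S(X)\leq 6$ and $S_y(X)\leq 3$ separately.
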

\begin{proof}
Let $y\in [n]\setminus X$.  Since $P(G)>0$, every edge in $G$ has multiplicity at least $1$, so $S_y(X)\geq 3$.  Thus
$$
3+S(X)\leq S_y(X)+S(X)=S(X \cup \{y\})\leq 9,
$$
which implies $S(X)\leq 6$.  By Lemma \ref{arithgeom}, this implies $P(X)\leq 2^3$.  By assumption, $S(X)\geq 6$, so we have $6+S_y(X)\leq S(X)+S_y(X)=S(X \cup \{y\})\leq 9$, which implies $S_y(X)\leq 3$.  Since every edge in $G$ has multiplicity at least $1$ and $|X|=3$, we must have $w(yx)=1$ for all $x\in X$.  Therefore $P(G)=P([n]\setminus X)P(X)\leq P([n]\setminus X)2^3$.
\end{proof}

\begin{fact}\label{c4fact}
For all $n\geq 4$ and $1\leq i< n$, $\ex(n,\{C_3,C_4\})\geq \ex(n-i,\{C_3,C_4\})+i$.  
\end{fact}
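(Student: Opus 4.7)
The plan is to exhibit a concrete $\{C_3,C_4\}$-free graph on $n$ vertices with exactly $\ex(n-i,\{C_3,C_4\})+i$ edges; by the definition of the extremal number, such a construction immediately yields the inequality $\ex(n,\{C_3,C_4\})\geq \ex(n-i,\{C_3,C_4\})+i$.

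First I would fix a $\{C_3,C_4\}$-free graph $H$ on vertex set $[n-i]$ with $|E(H)|=\ex(n-i,\{C_3,C_4\})$. Then I would form $H^*$ on vertex set $[n]$ by keeping all edges of $H$ and adding the $i$ pendant edges $\{v_0,n-i+1\},\ldots,\{v_0,n\}$, where $v_0\in[n-i]$ is any fixed vertex (e.g.\ $v_0=1$). By construction $|E(H^*)|=\ex(n-i,\{C_3,C_4\})+i$, so the only substantive step is to verify that $H^*$ still contains no $C_3$ or $C_4$ as a subgraph.

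For this, I would argue as follows: any cycle of $H^*$ passing through a new vertex $w\in\{n-i+1,\ldots,n\}$ would require $w$ to have at least two neighbors in $H^*$; but $w$ is a pendant whose unique neighbor is $v_0$, so $w$ lies on no cycle of $H^*$. Consequently every cycle of $H^*$ is contained in the old vertex set $[n-i]$ and is thus a cycle of $H$, which by the choice of $H$ avoids $C_3$ and $C_4$. Hence $H^*$ is $\{C_3,C_4\}$-free and the inequality follows.

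I do not anticipate any real obstacle: the argument rests only on the trivial observation that a degree-$1$ vertex cannot lie on any cycle. The same degree argument would work if the $i$ new pendants were distributed over distinct anchors of $[n-i]$, but the common-anchor choice is the cleanest. This lemma is then a one-step monotonicity tool, presumably to be iterated (or applied with suitable $i$) in later arguments bounding $\expi(n,4,9)$ via Theorem~\ref{caseiv1}.
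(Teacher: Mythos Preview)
Your proof is correct and is essentially identical to the paper's: both take an extremal $\{C_3,C_4\}$-free graph on $[n-i]$, attach the $i$ new vertices as pendants to a single anchor vertex (the paper also uses vertex $1$), and argue that degree-$1$ vertices lie on no cycle. The only difference is cosmetic---you spell out the degree argument slightly more explicitly than the paper does.
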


\begin{proof}
Suppose $n\geq 4$ and $1\leq i< n$.  Fix $G=([n-i],E)$ an extremal $\{C_3,C_4\}$-free graph.  Let $G'=([n],E')$ where $E'=E\cup \{n1, (n-1)1,\ldots, (n-i+1)1\}$.  Then $G'$ is $\{C_3,C_4\}$-free graph because $G=G'[n-i]$ is $\{C_3,C_4\}$-free and because the elements of $[n]\setminus [n-i]$ all have degree $1$ in $G'$.  Therefore $\ex(n,\{C_3,C_4\})\geq \ex(n-i,\{C_3,C_4\})+|E'\setminus E|=\ex(n-i,\{C_3,C_4\})+i$.
\end{proof}

We now prove Theorem \ref{fn3}.  We will use that $\ex(4,\{C_3,C_4\})=3$, $\ex(5,\{C_3,C_4\})=5$, and $\ex(6,\{C_3,C_4\})=6$ (see \cite{GKHF}).

\begin{theorem}\label{fn3}
For all $n\geq 4$ and $G \in F_{\leq 2}(n,4,9)$, $P(G)\leq 2^{\ex(n,\{C_3,C_4\})}$.
\end{theorem}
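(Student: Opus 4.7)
The approach is strong induction on $n \geq 4$, combining Lemmas \ref{reduction} and \ref{fn2} with Fact \ref{c4fact}. Let $G \in F_{\leq 2}(n,4,9)$. If $P(G) = 0$ the bound is trivial, so assume $P(G) > 0$. If in addition $G \in F(n,3,5)$, then $G \in D(n)$ and Lemma \ref{reduction} gives $P(G) \leq 2^{\ex(n, \{C_3, C_4\})}$ directly. Otherwise there exists $X \in \binom{[n]}{3}$ with $S(G[X]) \geq 6$, and Lemma \ref{fn2} yields
\[
P(G) \leq 2^3 \cdot P(G[[n] \setminus X]),
\]
where $G[[n]\setminus X] \in F_{\leq 2}(n - 3, 4, 9)$.

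For the inductive step $n \geq 7$, the induction hypothesis applied to $G[[n]\setminus X]$ gives $P(G[[n]\setminus X]) \leq 2^{\ex(n - 3, \{C_3, C_4\})}$. Applying Fact \ref{c4fact} with $i = 3$ then yields
\[
P(G) \leq 2^{3 + \ex(n - 3, \{C_3, C_4\})} \leq 2^{\ex(n, \{C_3, C_4\})},
\]
completing the step.

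The base cases $n \in \{4, 5, 6\}$ are handled by direct verification. In the ``heavy triple'' case the remaining subgraph has $n - 3 \in \{1, 2, 3\}$ vertices with multiplicities bounded by $2$, hence $P(G[[n] \setminus X]) \leq 2^{\binom{n-3}{2}}$, so $P(G) \leq 2^{3 + \binom{n-3}{2}}$. This evaluates to $8$, $16$, $64$ for $n = 4, 5, 6$ respectively, and each is at most $2^{\ex(n, \{C_3, C_4\})}$ using the cited values $\ex(4, \{C_3, C_4\}) = 3$, $\ex(5, \{C_3, C_4\}) = 5$, $\ex(6, \{C_3, C_4\}) = 6$.

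The conceptual core is the dichotomy: either $G$ is already a $(3,5)$-graph, in which case its doubled edges form a $\{C_3, C_4\}$-free graph and Lemma \ref{reduction} gives an exact match; or a heavy triple can be peeled off, losing a factor of $2^3$ that is exactly absorbed by the $+3$ slack in Fact \ref{c4fact}. There is no real obstacle beyond checking the three small base cases; once Lemmas \ref{reduction} and \ref{fn2} and Fact \ref{c4fact} are in hand, the inductive step is essentially automatic.
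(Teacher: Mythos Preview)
Your proof is correct and follows essentially the same approach as the paper: induction on $n$ via the dichotomy ``$G\in D(n)$ or a heavy triple exists,'' using Lemma~\ref{reduction} in the first case and Lemma~\ref{fn2} plus Fact~\ref{c4fact} (with the trivial bound $P\le 2^{\binom{n-3}{2}}$ for the small base cases) in the second. The only cosmetic difference is that the paper passes to an isomorphic copy $H\in F_{\le 2}(n-3,4,9)$ rather than writing $G[[n]\setminus X]\in F_{\le 2}(n-3,4,9)$ directly, but this does not affect the argument.
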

\begin{proof}
We proceed by induction on $n$. Assume first $4\leq n\leq 6$ and $G\in F_{\leq 2}(n,4,9)$.  If $P(G)=0$ then we are done.  If $G\in D(n)$, then we are done by Lemma \ref{reduction}.  So assume $P(G)>0$ and $G\in F_{\leq 2}(n,4,9)\setminus D(n)$.  By definition of $D(n)$ this means $G\notin F(n,3,5)$, so there is $X\in {[n]\choose 3}$ such that $S(X)\geq 6$.  By Lemma \ref{fn2}, this implies $P(G)\leq P([n]\setminus X)2^3\leq 2^{{n-3\choose 2}+3}$, where the second inequality is because $\mu(G)\leq 2$.  The explicit values for $\ex(n,\{C_3,C_4\})$ show that for $n\in \{4,5,6\}$, $2^{{n-3\choose 2}+3}\leq 2^{\ex(n,\{C_3,C_4\})}$.  Consequently, $P(G)\leq 2^{{n-3\choose 2}+3}\leq 2^{\ex(n,\{C_3,C_4\})}$.

Suppose now $n\geq 7$ and assume by induction that for all $4\leq n'<n$ and $G'\in F_{\leq 2}(n',4,9)$, $P(G')\leq 2^{\ex(n',4,9)}$.  Fix $G\in F_{\leq 2}(n,4,9)$.  If $P(G)=0$ then we are done.  If $G\in D(n)$, then we are done by Lemma \ref{reduction}.  So assume $P(G)>0$ and $G\in F_{\leq 2}(n,4,9)\setminus D(n)$.  By definition of $D(n)$ this means $G\notin F(n,3,5)$, so there is $X\in {[n]\choose 3}$ such that $S(X)\geq 6$.  By Lemma \ref{fn2}, this implies $P(G)\leq P([n]\setminus X)2^3$. Clearly there is $H\in F_{\leq 2}(n-3,4,9)$ such that $G[[n]\setminus X]\cong H$.  By our induction hypothesis applied to $H$, $P([n]\setminus X)=P(H)\leq 2^{\ex(n-3,\{C_3,C_4\})}$.  Therefore 
$$
P(G)\leq P([n]\setminus X)2^3\leq 2^{\ex(n-3,\{C_3,C_4\})+3}\leq 2^{\ex(n,\{C_3,C_4\})},
$$
where the last inequality is by Fact \ref{c4fact} with $i=3$.
\end{proof}

We will use the following lemma to prove Theorem \ref{reductionlemma}.  Observe for all $n\geq 2$, $\expi(n,4,9)>0$ implies that for all $G\in \calP(n,4,9)$, every edge in $G$ has multiplicity at least $1$.  We will write $xyz$ to denote the three element set $\{x,y,z\}$.

\begin{lemma}\label{prelimlem}
Suppose $n\geq 4$ and $G=([n],w)\in \calP(n,4,9)$ satisfies $\mu(G)\geq 3$.  Then one of the following hold.
\begin{enumerate}[(i)]
\item There is $xyz\in {[n]\choose 3}$ such that $\mu(G[[n]\setminus xyz])\leq 2$ and $P(G)\leq 6\cdot P([n]\setminus xyz)$.
\item There is $xy\in {[n]\choose 2}$ such that $\mu(G[[n]\setminus xy])\leq 2$ and $P(G)\leq 3\cdot P([n]\setminus xy)$.
\end{enumerate}
\end{lemma}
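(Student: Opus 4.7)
The plan is to exploit the fact that $G \in \calP(n,4,9)$ guarantees $P(G) > 0$: indeed, $\expi(n,4,9) \geq 2^{\ex(n,\{C_3,C_4\})} \geq 1$ for $n \geq 4$ by Lemma~\ref{fn1}, so every edge of $G$ must have multiplicity at least $1$. This lower bound combined with the $(4,9)$-constraint will force strong rigidity around any edge of high multiplicity.

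First I would fix an edge $xy$ with $w(xy) = \mu(G) \geq 3$. For any distinct $z,u \in [n]\setminus\{x,y\}$, the $4$-set $\{x,y,z,u\}$ gives
\[
w(xy) + w(xz) + w(xu) + w(yz) + w(yu) + w(zu) \leq 9,
\]
and each of the five terms besides $w(xy)$ is $\geq 1$, so $w(xy) \leq 4$. In the case $w(xy) = 4$ the inequality is tight, so every edge of $G$ other than $xy$ has weight $1$. Choosing any $z \in [n]\setminus\{x,y\}$ then gives $\mu(G[[n]\setminus xyz]) = 1$ and $P(G) = 4 \cdot P([n]\setminus xyz) \leq 6\cdot P([n]\setminus xyz)$, i.e.\ option (i).

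For the main case $w(xy)=3$, varying $z,u$ in the $4$-set inequality I would first observe that every edge $zu$ disjoint from $xy$ satisfies $w(zu) \leq 2$, and every edge $xz$ or $yz$ with $z \notin \{x,y\}$ also satisfies $w \leq 2$ (using any fourth vertex in $[n]\setminus\{x,y,z\}$). Hence $xy$ is the unique edge of weight $3$, so automatically $\mu(G[[n]\setminus xy]) \leq 2$ and $\mu(G[[n]\setminus xyz]) \leq 2$ for any $z$. I would then split on whether some $z \in [n]\setminus\{x,y\}$ has $w(xz) = 2$ or $w(yz) = 2$. In the affirmative case, say $w(xz) = 2$, the $4$-set $\{x,y,z,u\}$ gives $3 + 2 + w(yz) + w(xu) + w(yu) + w(zu) \leq 9$, forcing $w(yz) = 1$ and $w(xu) = w(yu) = w(zu) = 1$ for every $u \notin \{x,y,z\}$. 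Thus each of the $3(n-3)$ edges from $\{x,y,z\}$ to $[n]\setminus\{x,y,z\}$ has weight $1$, while $w(xy)w(xz)w(yz) = 6$, giving $P(G) = 6 \cdot P([n]\setminus xyz)$ and option (i). In the negative case, every edge incident to $x$ or $y$ other than $xy$ has weight $1$, so $P(G) = 3 \cdot P([n]\setminus xy)$, giving option (ii).

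There is no deep obstacle here; the entire argument is driven by the observation that a single $4$-set inequality, once the five ``other'' edges are known to have weight $\geq 1$, leaves only a narrow budget for the remaining weights. The bookkeeping challenge is simply to verify that each branch produces exactly one of the two stated conclusions with the correct choice of $xy$ or $xyz$ and the correct product bound; the uniqueness of the weight-$3$ edge takes care of the $\mu \leq 2$ requirement on the remainder in both subcases of $w(xy) = 3$.
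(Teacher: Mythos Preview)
Your proof is correct and follows essentially the same structure as the paper's: fix a maximum-weight edge $xy$, bound $w(xy)\le 4$, show every other edge has weight $\le 2$, and then split on whether some $z$ has $w(xz)=2$ or $w(yz)=2$. The one noteworthy difference is your handling of the case $w(xy)=4$: the paper rules it out by deriving $P(G)=4<2^{\ex(n,\{C_3,C_4\})}$ and invoking product-extremality, whereas you simply observe that this case already lands in option~(i) with $P(G)=4\cdot P([n]\setminus xyz)\le 6\cdot P([n]\setminus xyz)$; your treatment is a little cleaner since it does not need the external lower bound on $\expi(n,4,9)$ beyond $P(G)>0$. In the main case $w(xy)=3$ you also bypass Lemma~\ref{fn2} by reading off directly from the single $4$-set inequality that the crossing edges from $\{x,y,z\}$ all have weight $1$, which is exactly what that lemma would have given.
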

\begin{proof}
Suppose $n\geq 4$ and $G=([n],w)\in \calP(n,4,9)$ is such that $\mu(G)\geq 3$.  Fix $xy\in {[n]\choose 2}$ such that $w(xy)=\mu(G)$.  We begin by proving some preliminaries about $G$ and $xy$.  We first show $w(xy)=3$.  By assumption, $w(xy)\geq 3$.  Suppose towards a contradiction $w(xy)\geq 4$.  Choose some $u\neq v \in [n]\setminus xy$.  Since every edge in $G$ has multiplicity at least $1$, $5+w(xy)\leq S(\{x,y,u,v\})\leq 9$.  This implies $w(xy)\leq 9-5=4$, and consequently $w(xy)=4$.  Combining this with the fact that every edge has multiplicity at least $1$, we have 
$$
9\leq 4+w(uv)+w(ux)+w(vx)+w(yu)+w(yv)=S(\{x,y,u,v\})\leq 9.
$$
Consequently, $w(uv)=w(ux)=w(vx)=w(yu)=w(yv)=1$.  Since this holds for all pairs $uv\in {[n]\choose 2}\setminus xy$, we have shown $P(G)=w(xy)=4$.  Because $n\geq 4$, Fact \ref{c4fact} implies 
$$
2^{\ex(n,\{C_3,C_4\})}\geq 2^{\ex(4,\{C_3,C_4\})}=2^3>4=P(G).
$$ 
Combining this with Lemma \ref{fn1} shows $P(G)<2^{\ex(n,\{C_3,C_4\})}\leq \expi(n,4,5)$, a contradiction. Thus $\mu(G)=w(xy)=3$.   We now show that for all $uv\in {[n]\choose 2}\setminus xy$, $w(uv)\leq 2$. Fix $uv\in {[n]\choose 2}\setminus xy$ and suppose towards a contradiction $w(uv)\geq 3$.  Choose some $X\in {[n]\choose 4}$ containing $\{x,y,u,v\}$.  Because every edge in $G$ has multiplicity at least $1$, we have that $S(X)\geq w(uv)+w(xy)+4\geq 10$, a contradiction.  Thus $w(uv)\leq 2$ for all $uv\in {[n]\choose 2}\setminus xy$.  We now show that for all $z\in [n]\setminus xy$, at most one of $w(xz)$ or $w(yz)$ is equal to $2$.  Suppose towards a contradiction there is $z\in [n]\setminus xy$ such that $w(zx)=w(zy)=2$.   Note $S(xyz)\geq 7$. So for each $z'\in [n]\setminus xyz$, $S_{z'}(xyz)\leq 9-S(xyz)=9-7=2$.  But since every edge has multiplicity at least $1$ this is impossible.  Thus for all $z\in [n]\setminus xy$, at most one of $w(xz)$ or $w(yz)$ is equal to $2$.

We now prove either (i) or (ii) holds.  Suppose there is $z\in [n]\setminus xy$ such that one of $w(zx)$ or $w(zy)$ is equal to $2$.  Then by what we have shown, $\{w(xy), w(zx),w(zy)\}=\{3,1,2\}$, and consequently $P(xyz)=6$.  By Lemma \ref{fn2}, since $S(xyz)\geq 6$, we have that
$$
P(G)=P(xyz)P([n]\setminus xyz)= 6\cdot P([n]\setminus xyz).
$$
By the preceding arguments, $\mu(G[[n]\setminus xyz]))\leq 2$.  Thus (i) holds. Suppose now that for all $z\in [n]\setminus xy$, $w(xz)=w(yz)=1$.  Then $P(G)=w(xy)P([n]\setminus xy)= 3\cdot P([n]\setminus xy)$.  By the preceding arguments, $\mu(G[[n]\setminus xy])\leq 2$. Thus (ii) holds.
\end{proof}

\begin{theorem}\label{reductionlemma}
For all $n\geq 4$, $\calP(n,4,9)\subseteq F_{\leq 2}(n,4,9)$.
\end{theorem}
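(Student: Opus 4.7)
The approach is a proof by contradiction that simply capitalizes on Lemma~\ref{prelimlem}. Suppose for contradiction that some $G=([n],w)\in\calP(n,4,9)$ has $\mu(G)\geq 3$. Lemma~\ref{prelimlem} gives a subset $S\subseteq[n]$ of size $|S|\in\{2,3\}$ such that $\mu(G[[n]\setminus S])\leq 2$ and $P(G)\leq c_{|S|}\cdot P([n]\setminus S)$, where $c_3=6$ and $c_2=3$. The key numerical observation is that $c_{|S|}<2^{|S|}$ in both cases; combined with the fact that removing any $i$ vertices drops $\ex(\cdot,\{C_3,C_4\})$ by at most $i$ (Fact~\ref{c4fact}), this shortfall forces $P(G)$ to be strictly below $2^{\ex(n,\{C_3,C_4\})}\leq \expi(n,4,9)$ (the latter by Lemma~\ref{fn1}), contradicting product-extremality.

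Concretely, provided $n-|S|\geq 4$, the residual multigraph lies in $F_{\leq 2}(n-|S|,4,9)$ (the $(4,9)$-condition is hereditary and $\mu\leq 2$ by Lemma~\ref{prelimlem}), so Theorem~\ref{fn3} yields $P([n]\setminus S)\leq 2^{\ex(n-|S|,\{C_3,C_4\})}$. Applying Fact~\ref{c4fact} with $i=|S|$ gives $2^{\ex(n,\{C_3,C_4\})}\geq 2^{|S|}\cdot 2^{\ex(n-|S|,\{C_3,C_4\})}$, so
\[
P(G)\;\leq\;c_{|S|}\cdot 2^{\ex(n-|S|,\{C_3,C_4\})}\;<\;2^{|S|}\cdot 2^{\ex(n-|S|,\{C_3,C_4\})}\;\leq\;2^{\ex(n,\{C_3,C_4\})}\;\leq\;\expi(n,4,9),
\]
yielding the contradiction.

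The remaining small cases $n\in\{4,5,6\}$ (where $n-|S|$ may be $\leq 3$ and Theorem~\ref{fn3} does not directly apply to the residue) are dispatched by the cruder bound $P([n]\setminus S)\leq 2^{\binom{n-|S|}{2}}$ coming from $\mu\leq 2$, together with the explicit values $\ex(4,\{C_3,C_4\})=3$, $\ex(5,\{C_3,C_4\})=5$, $\ex(6,\{C_3,C_4\})=6$ recorded just above Theorem~\ref{fn3}. A brief case check — the tightest instance being $n=6$ with $|S|=3$, which gives $P(G)\leq 6\cdot 2^3=48<64=2^{\ex(6,\{C_3,C_4\})}$ — confirms $P(G)<2^{\ex(n,\{C_3,C_4\})}$ in each case, producing the same contradiction via Lemma~\ref{fn1}.

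There is no genuine obstacle: all of the structural work has been carried out in Lemma~\ref{prelimlem}, and this theorem is the short combinatorial pay-off of the two strict inequalities $6<2^3$ and $3<2^2$. The only mildly fiddly part is the handful of small $n$, which is disposed of by direct arithmetic.
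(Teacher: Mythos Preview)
Your proof is correct and follows essentially the same approach as the paper: both arguments invoke Lemma~\ref{prelimlem} to obtain the small set $S$, apply Theorem~\ref{fn3} to the residue once $n-|S|\geq 4$, combine with Fact~\ref{c4fact} and the strict inequalities $6<2^3$, $3<2^2$, and finish the handful of cases with $n-|S|\leq 3$ by the crude bound $P([n]\setminus S)\leq 2^{\binom{n-|S|}{2}}$ together with the explicit values of $\ex(n,\{C_3,C_4\})$ for $n\in\{4,5,6\}$. The only cosmetic difference is that the paper splits first on case (i)/(ii) of Lemma~\ref{prelimlem} and then on $n$, whereas you split on whether $n-|S|\geq 4$; note in particular that $n=6$ with $|S|=2$ falls into your first (Theorem~\ref{fn3}) branch, not the crude-bound branch.
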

\begin{proof}
Fix $n\geq 4$ and $G=([n],w)\in \calP(n,4,9)$.  Suppose towards a contradiction $G\notin F_{\leq 2}(n,4,9)$.  We show $P(G)<2^{\ex(n,\{C_3,C_4\})}$, contradicting that $G$ is product-extremal (since by Lemma \ref{fn1}, {$2^{\ex(n,\{C_3,C_4\})}\leq \expi(n,4,9)$}). 

Since $G\notin F_{\leq 2}(n,4,9)$, either (i) or (ii) of Lemma \ref{prelimlem} holds.  If (i) holds, choose $xyz\in {[n]\choose 3}$ with $\mu(G[[n]\setminus xyz])\leq 2$ and $P(G)\leq 6\cdot P([n]\setminus xyz)$.  Let $H\in F_{\leq 2}(n-2,4,9)$ be such that $G[[n]\setminus xy]\cong H$.  If $n\in \{4,5,6\}$, then $P(G)\leq 6\cdot P(H)\leq 6\cdot 2^{{n-3\choose 2}}<2^{\ex(n,\{C_3,C_4\})}$, where the second inequality is because $\mu(H)\leq 2$, and the strict inequality is from  the exact values for $\ex(n,\{C_3,C_4\})$ for $n\in \{4,5,6\}$.  If $n\geq 7$, then by Lemma \ref{fn3} and because $n-3\geq 4$, $P(H)\leq 2^{\ex(n-3,\{C_3,C_4\})}$.  Therefore,
\begin{align*}
P(G)\leq 6\cdot P(H)\leq 6\cdot 2^{\ex(n-3,\{C_3,C_4\})}<2^{\ex(n-3,\{C_3,C_4\})+3}\leq 2^{\ex(n,\{C_3,C_4\})},
\end{align*}
where the last inequality is by Fact \ref{c4fact}. If (ii) holds, choose $xy\in {[n]\choose 2}$ with $\mu(G[[n]\setminus xy]))\leq 2$ and $P(G)\leq 3 \cdot P([n]\setminus xy)$.  Let $H\in F_{\leq 2}(n-2,4,9)$ be such that $G[[n]\setminus xy]\cong H$.  If $n\in \{4,5\}$, then $P(G)\leq 3\cdot P(H)\leq 3\cdot 2^{n-2\choose 2}<2^{\ex(n,\{C_3,C_4\})}$, where the second inequality is because $\mu(H)\leq 2$, and the strict inequality is from the exact values for $\ex(n,\{C_3,C_4\})$ for $n\in \{4,5\}$. If $n\geq 6$, then $n-2\geq 4$ and Lemma \ref{fn3} imply $P(H)\leq 2^{\ex(n-2,\{C_3,C_4\})}$.  Therefore,
\begin{align*}
P(G)\leq 3\cdot P([n]\setminus xy)\leq 3 \cdot 2^{\ex(n-2,\{C_3,C_4\})}<2^{\ex(n-2,\{C_3,C_4\})+2}\leq 2^{\ex(n,\{C_3,C_4\})},
\end{align*}
where the last inequality is by Fact \ref{c4fact}.
\end{proof}

\noindent {\bf Proof of Theorem \ref{caseiv1}.}  Fix $n\geq 4$ and $G\in \calP(n,4,9)$.  By Theorem \ref{reductionlemma}, $G\in F_{\leq 2}(n,4,9)$.  By Theorem \ref{fn3}, this implies $P(G)\leq 2^{\ex(n,\{C_3,C_4\})}$.  By Lemma \ref{fn1}, $P(G)\geq 2^{\ex(n,\{C_3,C_4\})}$.  Consequently, $P(G)=2^{\ex(n,\{C_3,C_4\})}=\expi(n,4,9)$.
\qed

\bibliography{/Users/carolineterry/Desktop/refs.bib}
\bibliographystyle{amsplain}

\end{document}